\newcommand{\im}{\mbox{Im}}
\newcommand{\re}{\mbox{Re}}
\newcommand{\1}{\mathbf{1}}
\renewcommand{\epsilon}{\varepsilon}
\numberwithin{equation}{section}
\definecolor{marin}{rgb}   {0.,   0.1,   0.5} 
\definecolor{rouge}{rgb}   {0.8,   0.,   0.} 
\definecolor{sepia}{rgb}   {0.4,   0.25,   0.} 
\definecolor{mag}{rgb}   {0.3,   0,   0.3} 
\newtheorem{theorem}{Theorem}[section]
\newtheorem{corollary}[theorem]{Corollary}
\newtheorem{lemma}[theorem]{Lemma}
\newtheorem{proposition}[theorem]{Proposition}
\newtheorem{definition}[theorem]{Definition}
\newtheorem{remark}[theorem]{Remark}
\newtheorem{comment}[theorem]{Comment}
\newcommand{\eps}{\varepsilon}
\begin{document}

\title[Exponential stability of solutions to the Schr\"odinger--Poisson equation
]{Exponential stability of solutions to the Schr\"odinger--Poisson equation}

\author{Joackim Bernier}

\address{\small{Nantes Universit\'e, CNRS, Laboratoire de Math\'ematiques Jean Leray, LMJL,
F-44000 Nantes, France
}}

\email{joackim.bernier@univ-nantes.fr}

\author{Nicolas Camps}

\address{\small{Nantes Universit\'e, CNRS, Laboratoire de Math\'ematiques Jean Leray, LMJL,
F-44000 Nantes, France
}}

\email{nicolas.camps@univ-nantes.fr}

\author{Beno\^it Gr\'ebert}

\address{\small{Nantes Universit\'e, CNRS, Laboratoire de Math\'ematiques Jean Leray, LMJL,
F-44000 Nantes, France
}}

\email{benoit.grebert@univ-nantes.fr}

\author{Zhiqiang Wang}

\address{\small{Chern Institute of Mathematics and LPMC, Nankai University, Tianjin 300071, China
}}

\email{zqwang@nankai.edu.cn}


\subjclass[2010]{ 35Q55, 37K45, 37K55 }

\newcommand{\ma}{a}
\newcommand{\mb}{b}
\newcommand{\mB}{B}

\newcommand{\mi}{\mathbf{i}}
\newcommand{\md}{\mathrm{d}}
\newcommand{\mr}{\mathfrak r}
\newcommand{\bj}{\boldsymbol{j}}
\newcommand{\bh}{\mathbf{h}}
\newcommand{\bs}{\boldsymbol{S}}
\newcommand{\bk}{\mathbf{k}}
\newcommand{\oj}{\overline{j_0}}
\newcommand{\X}{\boldsymbol{X}}
\newcommand{\id}{\mathrm{I}}
\newcommand{\Id}{\mathrm{Id}}
\newcommand{\ad}{\mathsf{ad}}
\newcommand{\irr}{\mathsf{Irr}}
\newcommand{\sfc}{\mathsf C}
\newcommand{\mc}{\mathsf c}
\newcommand{\sis}{{\sigma}}
\newcommand{\lif}{{\ell^\infty}}
\newcommand{\lof}{{\ell_{\Gamma}^\infty}}
\newcommand{\bg}{\mathbb B}
\newcommand{\bn}{\mathcal B_\nu}
\newcommand{\lpi}[1]{l\in\bk^{-1}(#1)}
\newcommand{\lppi}[1]{l'\in\bk^{-1}(#1)}
\newcommand{\udl}[1]{\underline{~\text{provided }\sfc\ge #1~}}
\newcommand{\bN}{\mathfrak{N}}

\begin{abstract}

 We prove an exponential stability result for the small solutions of the Schr\"odinger--Poisson equation on the circle without exterior parameters in Gevrey class. More precisely we prove that for most of the initial data of Gevrey-norm smaller than $\varepsilon$ small enough, the solution of the Schr\"odinger--Poisson equation remains smaller than $2\varepsilon$ for time of order $ \exp \big(\alpha \frac{|\log\epsilon|^2}{\log|\log\epsilon|} \big)$. We stress out that this is the optimal time expected for PDEs as conjectured by Jean Bourgain in  \cite{Bou04b}.

\end{abstract} 
\maketitle

\setcounter{tocdepth}{1} 
\tableofcontents

\section{Introduction}

Hamiltonian PDEs have been very popular in many mathematical researchers over the past decades. Meanwhile, Birkhoff normal form theory is a key technique to study  the long-time behaviors of small amplitude solutions of nonlinear PDEs on bounded domains. In particular Birkhoff normal forms have been used  in non-resonant cases to prove stability over long periods of small and regular solutions \cite{Bou96,Bam03,BG06,BDGS07,G,GIP,Del12,FGL13,YZ14,BD17,FI19,BFG20b,BMP20,BFM22,BMM22}.  However, in all these works, external parameters were required to ensure  non-resonance  conditions on the linear frequencies.\\
 Without external parameters, Kuksin-P\"oschel \cite{KP1996} constructed a Cantor set of quasi-periodic solutions for resonant nonlinear Schr\"odinger equation (NLS) through KAM normal forms. The key was to use the nonlinearity to modulate the resonant linear frequencies and then to avoid  resonances. In short, the nonlinear term contributes to the stability of the solutions. 
Later  Bernier-Faou-Gr\'ebert \cite{KillBill}  exhibited rational normal forms to study Hamiltonian PDEs without external parameters and obtained  the stability over very long time ($\epsilon^{-r}$ for any large $r$ given) of the solutions to nonlinear Schr\"odinger equations and Schr\"odinger-Poisson equation (NLSP) on the circle (see also \cite{Bou00}). Then similar results were proved for Korteweg-De Vries equations and Benjamin Ono equations in \cite{KtA} and for Kirchhoff equations in \cite{BH22}.


In the present work we want to obtain exponential stability, i.e. stability for time of order $ \exp\big(\alpha \frac{|\log\epsilon|^2}{\log|\log\epsilon|} \big)$ for solutions of initial size $\epsilon$ in Gevrey class and still without using exterior parameters. As commented below this is the optimal time expected for PDEs (as conjectured by J. Bourgain, see  \cite{Bou04b}). Roughly speaking this exponential time is obtained by optimizing the procedure described in \cite{KillBill}. It  turns out that the NLSP case is easier than the NLS case. The reason is that  the cubic term of NLSP is enough to eliminate the higher orders (this simplifies the algebraic construction of the rational normal form) while the quintic term of NLS was used in \cite{KillBill}  to get rid of certain non-integrable resonant terms.
   The same result can be obtained for NLS, but at the cost of an extra technical effort and, in this work, we want to highlight the general method that can be applied in other cases, rather than fighting for the most general framework.\\
   We note that, very recently, J. Liu and D. Xiang have deposited a paper on arxiv \cite{LX23} which demonstrates a similar result for NLS still based on \cite{KillBill} but they do not obtain the optimal time but rather time of order $\exp(\alpha|\log\epsilon|^{1+\beta})$ for any $\beta<1$. \\
So in this work we consider the following  Schr\"odinger-Poisson equation on the torus $\mathbb T=\mathbb R/2\pi \mathbb Z$
\begin{equation}\tag{NLSP}\label{sp}
\begin{cases}
\mi\partial_tz-\Delta z+Wz=0,\\
\displaystyle
-\Delta W=|z|^2-\frac1{2\pi}\int_{\mathbb T}|z|^2\,\md x,
\end{cases}\quad
z=z(t,x),~(t,x)\in\mathbb R\times \mathbb T,
\end{equation}

As usual, we identify each function $z\in L^1(\mathbb{T};\mathbb{C})$ with the sequence of its Fourier coefficients $(z_a)_{a\in\mathbb{Z}} \in \mathbb{C}^\mathbb{Z}$ defined by
$$
z_a=\frac1{2\pi}\int_{\mathbb T}z(x)e^{-\mi ax}\,\md x.
$$
Given $\sigma>0,\theta\in(0,1)$, we  define the following  Gevrey space (the parameter $\theta$ and $\sigma$ are fixed once and for all)
\begin{equation}\label{gevrey}
	\mathcal G:=\left\{z \in \mathbb{C}^\mathbb{Z} ~|~||z||_{\sis}:=2\sum_{a\in\mathbb Z}e^{\sigma|a|^\theta}|z_a|<\infty\right\}.
\end{equation}
Note that, since $\sigma$ and $\theta$ are positive, this is a space of smooth functions, i.e. $\mathcal G \subset C^\infty(\mathbb{T};\mathbb{C})$. Moreover, since it is based on $\ell^1$, it is a Banach algebra.

By the second equation one has $W=V\star|z|^2$ where $V$ is the Green function of the operator $-\Delta$ with zero average on the torus. Therefore, we obtain the associated Hamiltonian 
\begin{equation}\label{Hamu}
	H(z)=\frac{1}{2\pi}\int_{\mathbb T}\left(|\nabla z|^2+\frac12(V\star|z|^2)|z|^2\right)\md  x.
\end{equation}
Note that it rewrites
\begin{equation}\label{Hamz}
H(z)=\sum_{a\in\mathbb Z}a^2|z_a|^2+\sum_{\substack{a_1+a_2=b_1+b_2\\a_1\ne b_1}}\frac{1}{2(a_1-b_1)^2}z_{a_1}z_{a_2}\overline{z_{b_1}}\overline{z_{b_2}}=:L_2(z)+P_4(z).
\end{equation}

\subsection{Main result}

We are going to consider solution of \eqref{sp} generated by initial data in a family of open subsets $\Theta_\varepsilon$ of $\mathcal{G}$ surrounding the origin (i.e. $0\in \overline{\cup_{\varepsilon >0 }\Theta_\varepsilon}$).
They are explicitly defined by \eqref{eq:big_set}, but their precise definition requires lots of notations and is not really necessary to state and to understand our main result. 

First, we are going to state a theorem, describing, for very long times, the dynamics of the solutions of \eqref{sp} whose initial data are in $\Theta_\varepsilon \cap \overline{\mathbb{B}(0, \varepsilon)}$ where $\mathbb{B}(0, \varepsilon)$ denotes the ball of radius $ \varepsilon$ in $\mathcal{G}$ and centered at the origin. Then we will state three propositions to explain why it allows to control the dynamics of most of the solutions in a sense that we specify carefully.

\begin{theorem} \label{thm:main} There exists $\epsilon_{\ast}(\sigma,\theta)>0$ such that for all $\epsilon\in(0,\epsilon_{\ast})$, for all $z^{(0)}\in \Theta_\varepsilon$ of size $\| z^{(0)} \|_\sigma \leq  \varepsilon$, the  local solution with data $z(0)=z^{(0)}$ to \eqref{sp} exists in $C([-T_\varepsilon,T_\varepsilon] ; \mathcal{G})$ with
\begin{equation}\label{Teps}
T_{\varepsilon} := \varepsilon^{-r_\eps}\quad \text{with}\quad r_\eps=\frac{\min(\sigma,1)\theta(1-\theta)}{1500}\frac{\log \varepsilon^{-1}}{\log\log \varepsilon^{-1}},
\end{equation} 
and this solution satisfies for $|t| \leq T_{\varepsilon}$
\[
\|z(t)\|_{\sigma}\leq2 \| z(0) \|_{\sis}\,,\quad \sum_{a\in\mathbb{Z}}e^{\sigma|a|^{\theta}}| |z_a(t)|^2-|z_a(0)|^2|^{\frac{1}{2}}\leq \| z(0) \|_{\sis}^{\frac{3}{2}}\,.
\] 
\end{theorem}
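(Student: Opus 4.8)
The plan is to implement an optimized Birkhoff/rational normal form procedure in the spirit of \cite{KillBill}, but with the number of normalization steps chosen to depend on $\epsilon$ so as to extract an exponentially long time. Write $H=L_2+P_4$ as in \eqref{Hamz} and work in the Gevrey space $\mathcal{G}$. The obstruction to a classical Birkhoff normal form is that the linear frequencies $\omega_a=a^2$ are completely resonant; following Kuksin--Pöschel and \cite{KillBill}, the remedy is to use the cubic (here quartic) part of the Hamiltonian to modulate the frequencies. Concretely, one first isolates from $P_4$ the resonant part $Z_4$ that Poisson-commutes with $L_2$ (the terms with $\{a_1,a_2\}=\{b_1,b_2\}$), and on the invariant tori of the actions $I_a=|z_a|^2$ one obtains modulated frequencies $\Omega_a(I)=a^2+\partial_{I_a}Z_4(I)$. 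The key nonresonance input — which I would take from the constructions leading to the definition of $\Theta_\epsilon$ in \eqref{eq:big_set} — is that for initial data outside a small-measure set, these modulated frequencies satisfy a second Melnikov--type lower bound $|\langle k,\Omega(I)\rangle|\gtrsim \gamma \|k\|^{-\tau}$ on the relevant range of $k$, where $\gamma$ and $\tau$ are allowed to degrade slowly with $\epsilon$.

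Next I would run the iterative normal form: at each step, solve a cohomological equation to remove the non-resonant part of the current quartic (and higher) remainder, paying a small divisor $\gamma\|k\|^{-\tau}$ and, crucially, controlling the loss of Gevrey regularity. The Gevrey weight $e^{\sigma|a|^\theta}$ is sub-exponential, so at each step one can afford a loss $\sigma\to\sigma-\delta$ with $\delta$ small; after $N$ steps the total loss is $N\delta$, which must stay below $\sigma/2$, forcing $N\lesssim \sigma/\delta$. The gain per step is that the remainder becomes of higher order, effectively $\|z\|^{2N+2}$ with a constant that grows factorially like $(N!)^{c}$ or like $N^{cN}$ from the accumulated small divisors and the combinatorics of Poisson brackets in the algebra $\mathcal{G}$. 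Optimizing: the remainder after $N$ steps is roughly $(CN)^{cN}\epsilon^{N}$ relative to the main term $\epsilon^2$; balancing the factorial loss against the gain $\epsilon^N$ with $N$ chosen as $N\sim |\log\epsilon|/\log|\log\epsilon|$ yields a remainder smaller than $\exp(-\alpha |\log\epsilon|^2/\log|\log\epsilon|)$, hence a stability time $T_\epsilon=\epsilon^{-r_\epsilon}$ with $r_\epsilon$ as in \eqref{Teps}. The explicit constant $1/1500$ and the factor $\min(\sigma,1)\theta(1-\theta)$ come from bookkeeping in this optimization (the $\theta(1-\theta)$ from the interplay between the Gevrey exponent and the polynomial small-divisor loss, the $\min(\sigma,1)$ from the regularity budget).

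The normal form in hand, the endgame is a standard bootstrap: since $L_2+Z_4$ (the integrable normal form) nearly preserves each action $I_a$, and the transformation $\tau$ is $\epsilon^2$-close to the identity in $\mathcal{G}$, as long as $\|z(t)\|_\sigma\le 2\epsilon$ the remainder drives $\frac{d}{dt}\|z(t)\|_\sigma$ (and the action variations) at a rate bounded by the tiny normal-form error times $T_\epsilon$, which is $\ll \epsilon$ by the choice of $r_\epsilon$. This closes the bootstrap on $[-T_\epsilon,T_\epsilon]$, gives $\|z(t)\|_\sigma\le 2\|z(0)\|_\sigma$, and the refined action estimate $\sum_a e^{\sigma|a|^\theta}\big||z_a(t)|^2-|z_a(0)|^2\big|^{1/2}\le \|z(0)\|_\sigma^{3/2}$ follows from the fact that the leading correction to the actions under the change of variables is cubic in $z$ and the normal-form flow preserves actions exactly. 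Local well-posedness in $\mathcal{G}$ is immediate since $\mathcal{G}$ is a Banach algebra and the nonlinearity is (in Fourier) a convolution-type cubic map, so the flow persists as long as the a priori bound holds.

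The main obstacle I expect is the quantitative control of the small divisors along the iteration — specifically, showing that one can choose the excised set $\Theta_\epsilon^c$ of small measure while keeping $\gamma$ and $\tau$ only mildly $\epsilon$-dependent, and simultaneously keeping the Gevrey-norm loss $N\delta$ within budget. The tension is that more normalization steps $N$ need both a larger frequency-modulation range (hence a larger excised set, or weaker divisor bounds) and a smaller per-step regularity loss $\delta$; making the two compatible with the *optimal* $N\sim |\log\epsilon|/\log|\log\epsilon|$ — rather than the sub-optimal $|\log\epsilon|^{1+\beta}$ scaling of \cite{LX23} — is exactly where the refinement of the \cite{KillBill} scheme, and the fact that only the quartic term is needed here, pays off.
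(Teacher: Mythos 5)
Your high-level strategy (resonant normal form, frequency modulation by the integrable quartic part, rational normal form \`a la \cite{KillBill}, optimization of the order $r$ in $\varepsilon$, final bootstrap) is the right family of ideas, but the mechanism you propose for reaching the \emph{optimal} time is not the one that works, and you are missing the structural ingredient that makes the paper's proof close. You propose to pay for the small divisors by losing Gevrey regularity $\sigma\to\sigma-\delta$ at each of the $N$ steps, with the budget $N\delta\le\sigma/2$. This is precisely the scheme of \cite{FG13,BMP20,LX23}, and it is what produces the sub-optimal times $\exp(-\alpha|\log\varepsilon|^{1+\beta})$: the per-step loss needed to absorb the small divisors (whose polynomial degree grows with the order of the monomials being removed) accumulates in a way that forces $r_\varepsilon=c(\log\varepsilon^{-1})^\beta$ with $\beta<1$ rather than $c\log\varepsilon^{-1}/\log\log\varepsilon^{-1}$ (see Remark \ref{rem:resp}). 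The paper's iteration loses \emph{no} regularity at all: the rational normal form is performed only on the finite-dimensional truncated system of low modes $|a|\le L_\varepsilon=6r_\varepsilon N_\varepsilon^2$, where the small divisors $\omega_{\bj}^M(z)$ are bounded below \emph{uniformly} by $\gamma\|z\|_\sigma^2$ on the non-resonant set (the polynomial losses are pushed entirely into the measure estimate, Proposition \ref{prop:measbis}, where factors like $M^r$ are harmless because $r_\varepsilon$ is chosen so that $M_\varepsilon^{r_\varepsilon}\le\varepsilon^{-1/108}$). The only place the Gevrey regularity is spent is in the control of the high modes, via the sub-additivity defect $\sum_\beta|j_\beta|^\theta-(\sum_\beta|j_\beta|)^\theta\ge(1-\theta)N^\theta$ of Lemma \ref{lem:expdecay} combined with Lemma \ref{lem:mu1mu3}; this, with $N_\varepsilon=(\log\varepsilon^{-1})^{2/\theta}$, is where the factor $\theta(1-\theta)$ in $r_\varepsilon$ actually comes from — not, as you suggest, from a regularity-loss-versus-small-divisor trade-off inside the iteration.

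Concretely, the missing steps are: (i) a preliminary resonant normal form (Theorem \ref{Bnormalform}) removing all non-resonant terms with respect to $L_2$, whose small divisors are $\ge 1$ and cost no regularity but whose constants must be tracked in $r$; (ii) the splitting $u=u_{\le L}+u_{>L}$ and a \emph{double} bootstrap: the high actions barely move because every resonant monomial touching a high mode must contain two large indices or a third index $\ge(M/m)^{1/2}$, yielding the exponential gain $e^{-\sigma(1-\theta)N^\theta}$ (Proposition \ref{prop:highmodes}, Lemma \ref{lem:mis}); (iii) the rational normal form on the finite-dimensional low-mode system with uniform small-divisor bounds and careful tracking of all constants in $r$ (Theorem \ref{Rnormalform}). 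Without (ii) your scheme has no finite-dimensional reduction, and without replacing the regularity-loss mechanism by the uniform lower bound on the modulated frequencies you cannot reach $r_\varepsilon\sim\log\varepsilon^{-1}/\log\log\varepsilon^{-1}$; you acknowledge this tension yourself in your last paragraph but do not resolve it.
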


Now, it remains to prove that the sets $\Theta_\varepsilon$ are large and enjoy good properties. First, we focus on the geometric properties of $\Theta_\varepsilon$. To claim this first proposition, we need to introduce the projection $\Pi_M : \mathcal{G} \to \mathcal{G}$ defined by
\begin{equation}
\label{PiM}
\Pi_{M}z = \sum_{|a|\leq M}z_{a}e^{\mi ax}\,
\end{equation}
then we naturally define $\mathcal{G}_{M} := \Pi_{M}\mathcal{G}$ and $\mathbb{B}_M(0,\varepsilon) =\Pi_{M} \mathbb{B}(0,\varepsilon). $

\begin{proposition}
\label{prop:geom}
For all $\varepsilon \in (0,\epsilon_{\ast})$, $\Theta_\varepsilon$ is an open subset of $\mathcal{G}$, which is a right cylinder of direction  $(\mathrm{Id}_\mathcal{G} - \Pi_{M_\varepsilon}) \mathcal{G}$ where $M_\varepsilon = (\log \varepsilon^{-1})^{1+\frac{4}{\theta}}$, i.e.
$$
z\in \Theta_\varepsilon \quad \iff \quad \Pi_{M_\varepsilon} z \in \Theta_\varepsilon
$$
and that is
invariant by translation of the angles, i.e.
$$
\sum_{a\in\mathbb Z}z_ae^{\mi ax}\in \Theta_\varepsilon \Longleftrightarrow\sum_{a\in\mathbb Z}|z_a|e^{\mi ax}\in \Theta_\varepsilon.
$$ 
\end{proposition}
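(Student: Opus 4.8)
The plan is to unravel the definition \eqref{eq:big_set} and verify the three claims; the substance of the matter is that the conditions defining $\Theta_\varepsilon$ form a finite family of strict inequalities whose left-hand sides depend on $z$ only through the actions $I_a := |z_a|^2$ of the low Fourier modes $|a| \le M_\varepsilon$.

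\emph{Cylinder property and angle invariance.} The set $\Theta_\varepsilon$ collects the data satisfying the non-resonance (small divisor) conditions required by the rational normal form construction. Because that normal form is truncated at the index cutoff $M_\varepsilon$, the resonance relations one must exclude involve only multi-indices supported on $\{|a|\le M_\varepsilon\}$, and the modulated internal frequencies entering them are built, by construction, from the actions $(I_a)_{|a|\le M_\varepsilon}$ alone. Hence membership in $\Theta_\varepsilon$ depends only on $(I_a)_{|a|\le M_\varepsilon}$, equivalently only on $\Pi_{M_\varepsilon}z$, which is exactly the assertion that $\Theta_\varepsilon$ is a right cylinder of direction $(\mathrm{Id}_\mathcal{G}-\Pi_{M_\varepsilon})\mathcal{G}$ and gives the equivalence $z\in\Theta_\varepsilon \iff \Pi_{M_\varepsilon}z\in\Theta_\varepsilon$. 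Since the defining quantities depend on $z$ only through the moduli $|z_a|$, they are unchanged under $z_a\mapsto e^{\mi\varphi_a}z_a$ for arbitrary phases; in particular replacing $\sum z_a e^{\mi ax}$ by $\sum |z_a| e^{\mi ax}$ leaves them invariant, so $\Theta_\varepsilon$ is stable under this map.

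\emph{Openness.} Each defining condition is a strict inequality of the form $F(\Pi_{M_\varepsilon}z)>0$, where $F$ is continuous — indeed smooth — in the finitely many real variables $(I_a)_{|a|\le M_\varepsilon}$. Since $\Pi_{M_\varepsilon}\colon\mathcal{G}\to\mathcal{G}$ is continuous and $z\mapsto (|z_a|^2)_{|a|\le M_\varepsilon}$ is continuous on $\mathcal{G}$, each such condition defines an open subset of $\mathcal{G}$; a finite intersection of these is again open, so $\Theta_\varepsilon$ is open in $\mathcal{G}$. (If the family of excluded resonances were infinite one would instead invoke the uniform-in-the-index lower bounds built into \eqref{eq:big_set}, but for fixed $\varepsilon$ the cutoff $M_\varepsilon$ and the bounded number of normal form steps make it finite.)

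The main obstacle is not conceptual but bookkeeping through the heavy notation of \eqref{eq:big_set}: one must confirm (i) that every quantity entering the definition genuinely factors through the low-mode actions — in particular that the frequency modulations appearing in the small-divisor conditions are the truncated ones, and are not functions of all the actions with uncontrolled tails — and (ii) that, for fixed $\varepsilon$, only finitely many index tuples are involved. Granting the precise form of \eqref{eq:big_set}, all three verifications are then routine.
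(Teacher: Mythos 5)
Your proposal is correct and follows essentially the same route as the paper: $\Theta_\varepsilon=\Pi_{L_\varepsilon}^{-1}\mathfrak{V}_{10\delta_\varepsilon}^{6r_\varepsilon,L_\varepsilon}$ is the preimage under a continuous projection of a set cut out by finitely many strict inequalities (the index set $\mathcal{N}^{6r_\varepsilon,L_\varepsilon}$ is finite) whose left-hand sides $\omega_{\bj}^{L_\varepsilon}$ depend only on the low-mode actions, which gives openness, the cylinder property and angle invariance at once. The one detail to make explicit is that the cutoff actually appearing in \eqref{eq:big_set} is $L_\varepsilon=6r_\varepsilon N_\varepsilon^2$ rather than $M_\varepsilon$, so one must check $L_\varepsilon\le M_\varepsilon$ for $\varepsilon$ small (true since $r_\varepsilon\ll\log\varepsilon^{-1}$) in order to upgrade the cylinder of direction $(\mathrm{Id}_{\mathcal G}-\Pi_{L_\varepsilon})\mathcal G$ to one of direction $(\mathrm{Id}_{\mathcal G}-\Pi_{M_\varepsilon})\mathcal G$.
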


\begin{proposition} \label{prop:meas} For all $\varepsilon \in (0,\epsilon_{\ast})$, setting $M_\varepsilon = (\log \varepsilon^{-1})^{1+\frac{4}{\theta}}$, we have that
\begin{equation}
\label{eq:meas}
\mathrm{meas}(\Theta_\varepsilon \cap \mathbb{B}_{M_\varepsilon}(0,\varepsilon))\geq(1-\varepsilon^{\frac{1}{6}})\mathrm{meas}(\mathbb{B}_{M_\varepsilon}(0,\varepsilon))
\end{equation}
where $\mathrm{meas}$ denotes any Lebesgue measure\footnote{Note that, thanks to the change of variable formula, the estimate \eqref{eq:meas} does not depend on the Lebesgue measure we choose on $\mathcal{G}_{M_\varepsilon}$.} on $\mathcal{G}_{M_\varepsilon}$.
\end{proposition}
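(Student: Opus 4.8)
The plan is to exploit the cylindrical and action-only structure of $\Theta_\varepsilon$ to reduce \eqref{eq:meas} to a finite union of one-dimensional sublevel-set estimates for affine functions, and then to sum these using the smallness of the non-resonance thresholds built into the definition \eqref{eq:big_set}. First I would perform the reduction to action space. By Proposition~\ref{prop:geom}, on the ball $\mathbb{B}_{M_\varepsilon}(0,\varepsilon)$ membership in $\Theta_\varepsilon$ depends only on $\Pi_{M_\varepsilon}z$ and is invariant under $z_a\mapsto|z_a|$; since two elements of $\mathcal{G}_{M_\varepsilon}$ with the same moduli have the same image under $z\mapsto\sum|z_a|e^{\mi ax}$, the condition depends only on the actions $I=(|z_a|^2)_{|a|\le M_\varepsilon}$. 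Passing to action--angle coordinates on $\mathcal{G}_{M_\varepsilon}\simeq\mathbb{C}^{2M_\varepsilon+1}$ and integrating out the angles, \eqref{eq:meas} becomes equivalent to $\mathrm{meas}_I(\mathcal{D}_\varepsilon\setminus\Theta_\varepsilon)\le\varepsilon^{1/6}\,\mathrm{meas}_I(\mathcal{D}_\varepsilon)$, where $\mathrm{meas}_I=\prod_{|a|\le M_\varepsilon}\md I_a$ and $\mathcal{D}_\varepsilon$ is the convex image of $\mathbb{B}_{M_\varepsilon}(0,\varepsilon)$ in action space, of diameter $\lesssim\varepsilon^2$; by the footnote to Proposition~\ref{prop:meas} the choice of Lebesgue measure is irrelevant.

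Next I would unfold \eqref{eq:big_set}: $\Theta_\varepsilon$ should be the set of $z$ for which a finite family of small divisors $\langle\Omega(I),k\rangle$ stays $\ge\gamma_k$ in modulus, indexed by the integer vectors $k$ with $\mathrm{supp}(k)\subset\{|a|\le M_\varepsilon\}$, with $\#\mathrm{supp}(k)$ and $|k|_1$ at most a constant times the number $N_\varepsilon\sim r_\varepsilon$ of normal-form steps, and which are linearly resonant but not trivial, i.e. $\sum_a k_a=\sum_a k_a a^2=0$ and $k$ is not a pairing of conjugate variables; here $\Omega_a(I)=a^2+\partial_{I_a}Z_4(I)$ is the frequency modulated by the integrable part $Z_4(I)=\tfrac12\sum_{a\ne b}(a-b)^{-2}I_aI_b$ of $P_4$ in \eqref{Hamz}. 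For such a $k$ the integer part $\sum_a k_a a^2$ vanishes, so $\langle\Omega(I),k\rangle=L_k(I)$ is the linear form with coefficients $(L_k)_b=\sum_{a\ne b}k_a(a-b)^{-2}$. The algebraic heart of the matter --- present already in \cite{KillBill} --- is that $L_k$ is not identically zero on the available coordinates: the rational function $w\mapsto\sum_{a\in\mathrm{supp}(k)}k_a(w-a)^{-2}$ has a genuine double pole at each point of $\mathrm{supp}(k)$, hence is nonzero with at most $2\#\mathrm{supp}(k)$ zeros, so there is an index $b_0$ with $|b_0|\le M_\varepsilon$ at which it does not vanish as soon as $M_\varepsilon$ is large compared with $N_\varepsilon$; clearing denominators exhibits $(L_k)_{b_0}$ as a nonzero integer divided by $\prod_{a\in\mathrm{supp}(k)}(b_0-a)^2$, whence the quantitative lower bound $\|L_k\|_\infty\ge(2M_\varepsilon)^{-CN_\varepsilon}$. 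The elementary sublevel-set inequality for an affine function on a convex body of diameter $\lesssim\varepsilon^2$ then gives $\mathrm{meas}_I\{I\in\mathcal{D}_\varepsilon:|L_k(I)|<\gamma_k\}\lesssim\gamma_k\,\varepsilon^{-2}\,\|L_k\|_\infty^{-1}\,\mathrm{meas}_I(\mathcal{D}_\varepsilon)$.

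Finally I would sum over the admissible $k$, whose number is at most $(2M_\varepsilon+1)^{CN_\varepsilon}$. Using $\|L_k\|_\infty^{-1}\le(2M_\varepsilon)^{CN_\varepsilon}$ and the fact --- this is exactly the point one optimizes in order to pass from the polynomial time of \cite{KillBill} to the exponential time \eqref{Teps} --- that the thresholds $\gamma_k$ decay fast enough in $N_\varepsilon$, the series $\sum_k\gamma_k\,\varepsilon^{-2}\,\|L_k\|_\infty^{-1}$ is dominated by $\varepsilon^{1/6}$ for $\varepsilon$ small, with the calibration $N_\varepsilon\sim r_\varepsilon$ and $M_\varepsilon=(\log\varepsilon^{-1})^{1+4/\theta}$. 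Combined with the reduction of the first paragraph, this yields \eqref{eq:meas}.

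The main obstacle is not the measure theory but the quantitative bookkeeping. One must take $M_\varepsilon$ large enough, relative to the number of normal-form steps, for $L_k$ to be non-degenerate, and then balance the decay of $\gamma_k$, the growth of $\|L_k\|_\infty^{-1}$, and the count of admissible $k$ so that the resulting series genuinely beats $\varepsilon^{1/6}$; it is this balance that pins down the precise exponents in \eqref{Teps} and in $M_\varepsilon$. Everything else --- the reduction to actions, integrating out the angles, and the slab estimate --- is routine.
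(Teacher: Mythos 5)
Your strategy is essentially the paper's: the complement of $\Theta_\varepsilon$ in the ball is a union, over non-integrable resonant multi-indices of length $\leq 6r_\varepsilon$ supported in low modes, of sublevel sets of linear forms in the actions; one coefficient of each form is bounded below (your ``double pole'' argument is precisely the content of Lemma~\ref{lem:mod-freq}, quoted from \cite{KillBill}); a one-variable Fubini estimate controls each piece; and a union bound closes the argument because the threshold $\delta_\varepsilon=\varepsilon^{5/2}$ gains a factor $\varepsilon^{1/2}$ over the action scale $\varepsilon^2$ while all combinatorial losses $(Mr)^{Cr}$, $e^{C\sigma r}$ are $\varepsilon^{-o(1)}$ by the choice $r_\varepsilon\sim\log\varepsilon^{-1}/\log\log\varepsilon^{-1}$ (see \eqref{eq:parameters3}). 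Two points in your write-up are inaccurate, though neither derails the proof. First, the image of $\mathbb{B}_{M_\varepsilon}(0,\varepsilon)$ in action space is \emph{not} convex (e.g.\ $\{\sqrt{I_1}+\sqrt{I_2}\leq 1\}$ is not convex in $(I_1,I_2)$), so the ``affine function on a convex body'' slab estimate does not apply as stated; the correct substitute is exactly the paper's argument, namely Fubini in the single variable $z_{a_*}$, using that for frozen remaining coordinates the admissible $I_{a_*}$ form an interval of length $\leq 2\gamma/|(L_k)_{a_*}|$, followed by a comparison of $\mathrm{meas}(\mathbb{B}_{M,a_*})$ with $\mathrm{meas}(\mathbb{B}_M)$ which produces an extra dimensional factor of order $M_\varepsilon^2 e^{2\sigma|a_*|^\theta}$ absent from your bound (harmless, since it is polylogarithmic in $\varepsilon^{-1}$). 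Second, the thresholds in \eqref{eq:big_set} are uniform in $\bj$ (equal to $10\delta_\varepsilon$), not decaying in $k$; the summability comes from the uniform smallness of $\delta_\varepsilon/\varepsilon^2$ against the subpolynomial count of multi-indices, which is the balance you correctly identify.
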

\begin{comment} We stress out that the first property of Proposition \ref{prop:geom} says that the belonging of $z$ to the set $\Theta_\varepsilon$ does not depend on the high Fourier modes of $z$ but only on those with indices smaller than $M_\varepsilon$. Note that, from a numerical point of view, $M_\varepsilon$ is actually very small in terms of $\varepsilon^{-1}$. Furthermore Proposition \ref{prop:meas} asserts that, reduced to this finite number of modes, the set $\Theta_\varepsilon$ is asymptotically of full Lebesgue measure. \end{comment} 
Now we state a probability result on $\Theta_\varepsilon$.
\begin{proposition} \label{prop:proba} Let $Y$ be a random function in $\mathcal{G}$, whose Fourier coefficients $Y_a$ are real, independent and uniformly distributed in $(0, \langle a \rangle^{-2} e^{-\sigma |a|^\theta})$, and let $Z^{(0)} = Y / \| Y\|_{\sigma}$ be the projection of $Y$ on the unit sphere. Then, provided that $\varepsilon_0$ is small enough, we have
$$
\mathbb{P}(\forall 0<\varepsilon \leq \varepsilon_0, \ \varepsilon Z^{(0)} \in \Theta_\varepsilon) \geq 1 - \varepsilon_0^{1/12}.
$$
\end{proposition}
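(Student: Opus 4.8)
The plan is to dispose of the continuum of scales first and then to prove a single-scale estimate. Fix the geometric sequence $\varepsilon_n=2^{-n}\varepsilon_0$. Unfolding the explicit definition \eqref{eq:big_set}, the dependence of $\Theta_\varepsilon$ on $\varepsilon$ enters only through the cutoff $M_\varepsilon$ and through the small-divisor thresholds of the rational normal form, which vary monotonically with $\varepsilon$; combining this with $Z^{(0)}\in\mathcal G$ — so that the Fourier modes $Z^{(0)}_a$ with $|a|>M_{\varepsilon_n}$ are Gevrey-negligible — one verifies that for $\varepsilon\in(\varepsilon_{n+1},\varepsilon_n]$ the membership $\varepsilon Z^{(0)}\in\Theta_\varepsilon$ is implied by $\varepsilon_n Z^{(0)}\in\Theta_{\varepsilon_n}$ together with $\varepsilon_{n+1}Z^{(0)}\in\Theta_{\varepsilon_{n+1}}$. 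Hence $\{\exists\,0<\varepsilon\le\varepsilon_0:\varepsilon Z^{(0)}\notin\Theta_\varepsilon\}\subset\bigcup_{n\ge0}\{\varepsilon_n Z^{(0)}\notin\Theta_{\varepsilon_n}\}$, and by the union bound it suffices to establish a single-scale bound $\mathbb P\big(\varepsilon Z^{(0)}\notin\Theta_\varepsilon\big)\le\tfrac12\,\varepsilon^{1/10}$ for all $\varepsilon\le\varepsilon_\ast$, since then $\sum_{n\ge0}\mathbb P(\varepsilon_n Z^{(0)}\notin\Theta_{\varepsilon_n})\le\tfrac12\varepsilon_0^{1/10}\sum_{n\ge0}2^{-n/10}\le\varepsilon_0^{1/12}$ once $\varepsilon_0$ is small enough.

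For the single-scale estimate, fix $\varepsilon\le\varepsilon_\ast$ and set $z=\varepsilon Z^{(0)}=\varepsilon Y/\|Y\|_\sigma$. By the cylinder property of Proposition \ref{prop:geom}, the event $\{z\notin\Theta_\varepsilon\}$ depends only on $\Pi_{M_\varepsilon}z=\varepsilon\Pi_{M_\varepsilon}Y/\|Y\|_\sigma\in\overline{\mathbb B_{M_\varepsilon}(0,\varepsilon)}$, and by its angle-invariance property only on the finite family of actions $I_a=|z_a|^2=(\varepsilon Y_a/\|Y\|_\sigma)^2$ with $|a|\le M_\varepsilon$. Reading off \eqref{eq:big_set}, $\Theta_\varepsilon^c\cap\mathbb B_{M_\varepsilon}(0,\varepsilon)$ is covered by a controlled family of \emph{resonant slabs} $\mathcal S_\ell=\{z:|\ell(I)|<\gamma_\varepsilon w_\ell\}$, indexed by the affine forms $\ell$ in the actions attached to the resonant modules surviving the rational normal form, with explicit weights $w_\ell$ and small-divisor threshold $\gamma_\varepsilon$; Proposition \ref{prop:meas} is precisely the assertion that the total Lebesgue measure of these slabs is at most $\varepsilon^{1/6}$ times that of the ball. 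A crucial structural input is that the twist produced by the quartic term $P_4$ of \eqref{Hamz}, whose coefficients are $\tfrac1{2(a_1-b_1)^2}$, is non-degenerate: each $\ell$ has a coefficient $A_{\ell,a_0}$ of modulus bounded below by a power of $M_\varepsilon^{-1}$.

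The heart of the matter is that this last step cannot be carried out ``softly''. Bounding $\mathbb P(z\notin\Theta_\varepsilon)$ by the ratio $\varepsilon^{1/6}$ of Proposition \ref{prop:meas} times a density bound for the law of $\Pi_{M_\varepsilon}Z^{(0)}$ fails, because that law is singular with respect to Lebesgue measure on $\mathcal G_{M_\varepsilon}$ and strongly concentrated: the relative volumes of $\mathbb B_{M_\varepsilon}(0,\varepsilon)$ and of the cube supporting $\Pi_{M_\varepsilon}Y$ already carry a mismatch $\prod_{|a|\le M_\varepsilon}\langle a\rangle^{2}$, of size $e^{(4+o(1))M_\varepsilon\log M_\varepsilon}$, coming from the Gevrey weights, which overwhelms any gain from $\varepsilon^{1/6}$. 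Instead I would estimate each slab separately: conditioning on all Fourier coefficients of $Y$ except $Y_{a_0}$, the quantity $\ell(I)$ becomes a rational function of the single variable $Y_{a_0}$ with at most one critical point, so $\{|\ell(I)|<\gamma_\varepsilon w_\ell\}$ is a union of at most two intervals; using that $Y_{a_0}$ is uniform on $(0,\langle a_0\rangle^{-2}e^{-\sigma|a_0|^\theta})$ together with quantitative lower bounds on the first derivative of this rational function away from its critical point and on its second derivative at it (in terms of $|A_{\ell,a_0}|$, $\|Y\|_\sigma$ and $e^{-\sigma|a_0|^\theta}$) gives $\mathbb P(z\in\mathcal S_\ell)\lesssim\sqrt{\gamma_\varepsilon w_\ell}\times(\text{explicit factors})$. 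One then sums over the finitely many $\ell$, after excising a few events of small probability — $\{\|Y\|_\sigma\text{ small}\}$, $\{\|(\mathrm{Id}-\Pi_{M_\varepsilon})Y\|_\sigma\text{ small}\}$, $\{\text{some }Y_a,\ |a|\le M_\varepsilon,\text{ anomalously small}\}$ — each of which forces one or two independent uniform coordinates to be small and thus has probability $\lesssim\varepsilon^{1/9}\,\mathrm{poly}(\log\varepsilon^{-1})$.

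The main obstacles are quantitative. First, in the third step one must count the resonant modules and size $\gamma_\varepsilon$ simultaneously: it has to be small enough for the summed probability to beat $\tfrac12\varepsilon^{1/10}$, yet large enough to be consistent with Proposition \ref{prop:meas}, and one must arrange that the large factor $e^{\sigma|a_0|^\theta}$ entering through $1/b_{a_0}=\langle a_0\rangle^{2}e^{\sigma|a_0|^\theta}$ is compensated by the decay of the weights $w_\ell$ for slabs involving high modes — and this is exactly where the precise shape of \eqref{eq:big_set} is needed. Second, and most importantly, the passage from the discrete scales $\varepsilon_n$ to the continuum in the first paragraph is where the ``for all $\varepsilon$'' strength of the statement is genuinely won; it rests on the monotonicity built into the construction and on the Gevrey decay of $Z^{(0)}$ to ensure that testing the countably many scales $\varepsilon_n$ controls every intermediate one. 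This Borel--Cantelli-type reduction, rather than the single-scale probabilistic estimate itself, is the delicate part.
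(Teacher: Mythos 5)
Your probabilistic core (condition on a single anchor coordinate $Y_{a_0}$ chosen via the non-degeneracy of the twist, reduce to a one-dimensional measure estimate, union-bound over resonances) is essentially the paper's mechanism, and your observation that one cannot simply transfer Proposition \ref{prop:meas} through a density bound — because the law of $\Pi_{M_\varepsilon}Z^{(0)}$ is singular and the Gevrey volume factors $\prod\langle a\rangle^2 e^{\sigma|a|^\theta}$ overwhelm the gain $\varepsilon^{1/6}$ — is exactly right and is why the paper also argues slab by slab. The genuine gap is in your first paragraph, which you yourself flag as ``the delicate part'' but never carry out: the claimed implication from the two dyadic scales $\varepsilon_{n+1}<\varepsilon\leq\varepsilon_n$ to the intermediate scale does not follow from any monotonicity actually present in the construction. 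After homogeneity, $\varepsilon Z^{(0)}\in\Theta_\varepsilon$ reads $|\omega_{\bj}^{L_\varepsilon}(\Pi_{L_\varepsilon}Z^{(0)})|>10\,\varepsilon^{1/2}$ for all $\bj\in\mathcal{N}^{6r_\varepsilon,L_\varepsilon}$. For a multi-index $\bj$ that belongs to $\mathcal{N}^{6r_\varepsilon,L_\varepsilon}$ but not to $\mathcal{N}^{6r_{\varepsilon_n},L_{\varepsilon_n}}$ (the index set grows as $\varepsilon$ decreases), only the condition at scale $\varepsilon_{n+1}$ is available, and it gives the threshold $10\,\varepsilon_{n+1}^{1/2}$, which is \emph{smaller} than the required $10\,\varepsilon^{1/2}$ by a factor up to $\sqrt{2}$; moreover $\omega_{\bj}^{L_\varepsilon}$ and $\omega_{\bj}^{L_{\varepsilon_{n+1}}}$ are different functions (the truncation $L$ sits inside the modulated frequency), so an additional tail comparison is needed. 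Both defects are repairable (strengthen the dyadic events to a larger threshold, control the tail by the Gevrey decay), but as written the reduction does not close, and it is precisely the step that wins the ``for all $\varepsilon$'' in the statement.

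The paper avoids the dyadic machinery altogether, and this is the real divergence between the two routes. It introduces the $\varepsilon$-independent, untruncated modulated frequencies $\omega_{\bj}^{\infty}$ and proves a single probabilistic estimate: with probability at least $1-\varepsilon_0^{1/12}$ one has $|\omega_{\bj}^{\infty}(Z^{(0)})|\geq\varepsilon_0^{1/6}\eta_{\bj}$ for \emph{all} $\bj\in\mathcal{N}$ simultaneously, the weights $\eta_{\bj}$ decaying fast enough to make the union bound over the infinite family converge. A purely deterministic second step then shows that this one event implies $\varepsilon Z^{(0)}\in\Theta_\varepsilon$ for every $\varepsilon\leq\varepsilon_0$ at once: the lower bound $\varepsilon_0^{1/6}\varepsilon^2\eta_{\bj}\geq4\delta_\varepsilon$ follows from the parameter choices \eqref{eq:parameters3}, and the discrepancy between $\omega_{\bj}^{\infty}$ and $\omega_{\bj}^{L_\varepsilon}\circ\Pi_{L_\varepsilon}$ is absorbed using Lemma \ref{lipschitzomega} and the Gevrey tail $e^{-2\sigma L_\varepsilon^{\theta}}$. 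If you want to salvage your architecture, the cleanest fix is to adopt this idea: formulate the good event at the level of $\omega_{\bj}^{\infty}(Z^{(0)})$ with $\bj$-dependent thresholds, so that no comparison between different values of $\varepsilon$ is ever needed.
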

As a consequence we deduce that $\bigcup_{\varepsilon >0 }\Theta_\varepsilon \cap \overline{ \mathbb{B}(0, \varepsilon)}$ (i.e. the set of the "good" initial data) is "almost surely star shaped":
\begin{corollary}
Almost surely, there exists $\varepsilon_0>0$ such that for all $\varepsilon \in (0,\varepsilon_0)$, $\varepsilon Z^{(0)} \in \Theta_\varepsilon$.
\end{corollary}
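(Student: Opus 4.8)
The plan is to deduce the corollary from Proposition~\ref{prop:proba} by a standard continuity-of-measure argument. For $\varepsilon_0>0$, denote by $A_{\varepsilon_0}$ the event $\{\forall\,0<\varepsilon\le\varepsilon_0,\ \varepsilon Z^{(0)}\in\Theta_\varepsilon\}$, so that the event appearing in the corollary is exactly $A:=\bigcup_{\varepsilon_0>0}A_{\varepsilon_0}$. First I would observe that the family $(A_{\varepsilon_0})_{\varepsilon_0>0}$ is nondecreasing as $\varepsilon_0\downarrow 0$: if $\varepsilon_0'<\varepsilon_0$, then requiring $\varepsilon Z^{(0)}\in\Theta_\varepsilon$ for all $\varepsilon\le\varepsilon_0$ is a stronger constraint than requiring it only for $\varepsilon\le\varepsilon_0'$, hence $A_{\varepsilon_0}\subseteq A_{\varepsilon_0'}$. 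Consequently $A$ coincides with the countable increasing union $\bigcup_{n\ge n_0}A_{1/n}$ for any fixed integer $n_0$, which in particular takes care of the measurability of $A$ (each $A_{1/n}$ is the event already considered in Proposition~\ref{prop:proba} with $\varepsilon_0=1/n$; alternatively one notes that, by Proposition~\ref{prop:geom}, membership in $\Theta_\varepsilon$ depends only on the finitely many modes $|a|\le M_\varepsilon$, so $A_{1/n}$ is a Borel condition on a finite-dimensional projection).

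Then I would choose $n_0$ large enough that $1/n_0$ lies below the threshold $\varepsilon_0$ permitted in Proposition~\ref{prop:proba}, apply that proposition with $\varepsilon_0=1/n$ for each $n\ge n_0$ to obtain $\mathbb{P}(A_{1/n})\ge 1-n^{-1/12}$, and let $n\to\infty$. By continuity from below of the probability measure along the increasing sequence $(A_{1/n})_n$,
\[
\mathbb{P}(A)=\lim_{n\to\infty}\mathbb{P}(A_{1/n})\ \ge\ \lim_{n\to\infty}\bigl(1-n^{-1/12}\bigr)=1,
\]
so $\mathbb{P}(A)=1$. This is precisely the statement that, almost surely, there exists $\varepsilon_0>0$ such that $\varepsilon Z^{(0)}\in\Theta_\varepsilon$ for all $\varepsilon\in(0,\varepsilon_0)$ (the discrepancy between the open interval $(0,\varepsilon_0)$ in the corollary and the closed one $(0,\varepsilon_0]$ in Proposition~\ref{prop:proba} being harmless).

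The only point requiring any care — and the nearest thing to an obstacle here — is the reduction of the a priori uncountable union $\bigcup_{\varepsilon_0>0}A_{\varepsilon_0}$ to a countable one, together with the concomitant measurability question; both are dispatched by the monotonicity observation above, so the argument is essentially immediate once Proposition~\ref{prop:proba} is available. All the genuine work lies in establishing Proposition~\ref{prop:proba} itself.
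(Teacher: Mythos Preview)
Your argument is correct and is precisely the standard continuity-of-measure deduction one expects here; the paper itself does not spell out a proof of the corollary, treating it as an immediate consequence of Proposition~\ref{prop:proba}, and your monotonicity-plus-countable-union argument is exactly how that implication is made rigorous.
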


\subsection{Related results}

The stability of small solutions over exponential times, in a stronger or weaker sense depending on the article, has already been demonstrated using external parameters. For the NLS equation on $\mathbb T^d$ in analytic regularity, Faou-Gr\'ebert in \cite{FG13} obtain a time $T_\epsilon=\exp(\alpha|\log\epsilon|^{1+\beta})$ for any $\beta<1$. For the NLS equation on $\mathbb T$ with Gevrey regularity, Biasco-Massetti-Procesi in \cite{BMP20} obtain, still using external parameters, a time of order $\exp (\alpha| \log \epsilon|^{1+\theta/4})$ (see also \cite{CMWZ22} for derivative NLS and \cite{CCMW22} for a result in class of regularity between $C^\infty$ and Gevrey). As already noticed above,  Liu and Xiang (see \cite{LX23}), for NLS without parameters in Gevrey regularity, obtained very recently a time of order $T_\epsilon=\exp(\alpha|\log\epsilon|^{1+\beta})$ for any $\beta<1$.  \\
In finite dimension $n$, the standard Nekhoroshev result \cite{Nek77} controls the dynamics over times of order $\exp \big(\alpha \epsilon^{-1/(\tau +1)} \big)$ for some $\alpha>0$ and $\tau>n+1$   (see for instance \cite{ben85, GG85, P93}) which is, of course, much better than our $T_\epsilon$. Nevertheless, clearly this standard result does not extend to the infinite dimensional context: when $n\to +\infty$ we get $\tau\to+\infty$. Actually this kind of exponential times $ \exp \big(\alpha \frac{|\log\epsilon|^2}{\log|\log\epsilon|} \big)$ were obtained by Benettin-Fr\"ohlich-Giorgilli in \cite{BFG88} for a Hamiltonian system with infinitely many degrees of freedom but with finite-range couplings. We also notice that this time was suggested by Bourgain as the optimal time that we could obtain in an analytical context (see eq. (2.14) in   \cite{Bou04b}). It's somewhat surprising that we get the same time for Gevrey regularities. We are also convinced that this optimal time is not model-dependent, and that we could adapt our method to NLS and probably also to KdV. As usual the method is based on an optimization with respect to $\eps$ of the order $r_\eps$ of the normal form  (in this case a rational normal form) that we perform. An important point in our approach (see section \ref{sec:outline} ) is that we perform the optimization  on a reduced system involving only low frequency modes. The initial model mainly plays a role in controlling the high modes  (and their influence on low modes) and not directly on the optimal time. Note in passing that this method could also be applied to simpler cases, in particular to NLS with convolutive potentials acting as  external parameters. By the way, this is also the time that two of us obtained  in \cite{BG22} for special choices of convolution  potentials in NLS on $\mathbb  T^d$ in $H^s$ for any $s>d/2$.\\
Finally, we'd obviously like to obtain a result that is valid for any small initial condition and not just for almost any small initial data. In finite dimension, this is achieved by the geometric Nekhoroshev method (in contrats to the analytic Nekhoroshev method used in this work) based on convexity arguments. This approach is very tempting in the case of resonant PDEs, but unfortunately the \cite{BouKal} article is not very optimistic about its success in high dimension, let alone infinite dimension (see however \cite{Bam99}).

\subsection{Proof strategy and  paper outline}\label{sec:outline} In Section~\ref{sec:ham} we introduce Hamiltonian setting and we define a class of polynomial Hamiltonians for which we describe the associated Hamiltonian flows  (Lemma~\ref{flow}).  Then, in Section~\ref{sec:bnf}, we are in position to state and prove a resonant normal form result, Theorem~\ref{Bnormalform}. Notice that in formula \eqref{decomp1} NLSP appears as a perturbation of $L_2+L_4$ which is nonlinear but integrable. We will use this fact to modulate the frequencies.\\
 In Section~\ref{sec:high}, just using the resonant normal form, we control the dynamics of the high frequencies over exponential times, under some a priori control on the norm of the solution. Then it remains to control the dynamics of the low frequencies, says $u_{\leq M}:=(u_k)_{|k|\leq M}$. Naturally $u_{\leq M}$ satisfies a {\it finite} dimensional system governed by the truncated Hamiltonian. Thus in Section~\ref{sec:rational1} and Section~\ref{sec:rational2} we perform a  rational normal form for this truncated Hamiltonian, Theorem~\ref{Rnormalform}, in a {\it finite} dimensional setting. This way of proceeding (i.e. reducing first the problem in finite dimension) has the enormous advantage (comparing with \cite{KillBill}) of leaving aside many of  tricky problems of convergence and in particular the problem of  the existence of derivatives with respect to time, which usually require a somewhat convoluted passage through density. We emphasize that in Section~\ref{sec:rational1} and Section~\ref{sec:rational2} we follow carefully the dependency of all the parameters with respect to the degree $r$ of the normal form. This was not necessary in  \cite{KillBill} since this order $r$ was fixed from the very beginning, but now we want to optimize it. \\
Finally in Section~\ref{sec:dyn} we put all together and prove the main Theorem by optimizing the parameters (in particular the degree $r$) introduced in the previous sections in term of $\varepsilon$, the size of the solution.  The optimal degree is $r_\varepsilon= c(\sigma,\theta)\frac{\log \varepsilon^{-1}}{\log\log \varepsilon^{-1}}$ and thus we naturally obtained the optimal time \eqref{Teps} in Theorem~\ref{thm:main}.   The optimized truncation parameter is $M_\varepsilon = (\log \varepsilon^{-1})^{1+\frac{4}{\theta}}$ and since the construction of the set $\Theta_\varepsilon$ only depends on the rational normal form construction of Section~\ref{sec:rational2}, we obtain easily Proposition~\ref{prop:geom}.

\subsection{Notations}
In all this paper we set $\mathbb{U}_2 := \{-1,1\}$. $L^2(\mathbb{T};\mathbb{C})$ is equipped with its real scalar product
\begin{equation}
\label{def:scalar_prod}
(u,v)_{L^2} = \re \sum_{a\in \mathbb{Z}} u_a \overline{v_a} = \frac1{2\pi} \re \int_{\mathbb{T}} u(x) \overline{v(x)}\mathrm{d}x
\end{equation}
As usual, we always identify sequences $z\in \mathbb{C}^{\mathbb{Z}}$ with sequences in $\mathbb{C}^{\mathbb{U}_2 \times \mathbb{Z}}$ by using the convention 
$$
z_{-1,a} := \overline{z_a} \quad \mathrm{and} \quad z_{1,a} := z_a, \quad \forall a \in \mathbb{Z}.
$$
Given $j=(\delta,a) \in \mathbb{U}_2 \times \mathbb{Z}$, we define its conjugate by
\begin{equation}\label{jbar}
\overline{j} = \overline{(\delta,a)} = (-\delta,a)
\end{equation}
and we set
$$
\delta(j) = \delta \quad \mathrm{and} \quad a(j) =a.
$$
Furthermore, we use the following unusual conventions
$$
|j|:=|(\delta,a)|:=|a| \quad \mathrm{and} \quad \langle j \rangle:=\langle (\delta,a) \rangle:=\max(1,|a|). 
$$
Given a set $E$ and finite sequence $v \in E^n$ of elements of $E$  of size $n\geq 0$ for some $n\in \mathbb{N}$ then $\# v$ denotes the number of elements of $v$ that is $\# v := n$.

\subsection{Acknowledgments}
During the preparation of this work the authors benefited from the support of the Centre Henri Lebesgue ANR-11-LABX-0020-0 and J.B. and N.C. were also supported by the region "Pays de la Loire" through the project "MasCan"
and Z.W. was also supported by CSC grant (202206100095) and Nankai Zhide Foundation.
The authors were partially supported by the ANR project KEN ANR-22-CE40-0016.

\section{Functional setting and polynomials}
\label{sec:ham}
\subsection{Differential calculus and symplectic structure} In this paper, we always consider $\mathcal{G}$ as a real normed vector space (and not as a complex one). Note that, in particular, it implies that $z \mapsto \overline{z}$ is smooth. Being given a real Banach space $E$ and a map $f \in C^1(\mathcal{G};E)$ we define its partial derivatives with respect to $z_a$ and $\overline{z_a}$ by
$$
2\partial_{z_a} f :=  \partial_{\re z_a} f - \partial_{\im z_a} f \quad \mathrm{and} \quad 2\partial_{\overline{z_a}} f :=  \partial_{\re z_a} f + \partial_{\im z_a} f.
$$
Moreover, we set
$$
\partial_{I_a} f := \partial_{z_a}\partial_{\overline{z_a}} f. 
$$
Given an open set $\mathcal{O} \subset \mathcal{G}$ and a Hamiltonian $P \in  C^1 (\mathcal{O};\mathbb{C})$ we define its \emph{Hamiltonian vector field} $\X_P : \mathcal{G} \to \mathbb{C}^\mathbb{Z}$  by
$$
(\X_P(z))_a := \mi\partial_{\overline{z_a}} P(z).
$$
Note that if $P$ is real valued it implies that
$$
\X_P =\frac{\mi}2 \nabla P\,,
$$
where the gradient is associated the $L^2$ scalar product \eqref{def:scalar_prod}. Moreover, if $P$ is real valued, we have the useful relation
$$
(\X_P(z))_{j} = \delta(j) \mi \frac{\partial P}{\partial z_{\bar j}}(z), \quad \forall j\in \mathbb{U}_2 \times \mathbb{Z}.
$$
Given $P,Q \in  C^1 (\mathcal{O};\mathbb{C})$ two Hamiltonians such that $\X_P$ or $\X_Q$ takes values in $\mathcal{G}$, we define their \emph{Poisson bracket} by\footnote{Note that, it makes sense because by duality we always have $\X_P(z),\X_Q(z) \in \mathcal{G}'$.}
$$
\{P,Q\}:=\mi\sum_{j\in\mathbb U_2\times\mathbb Z}\delta(j)\frac{\partial P}{\partial z_j}\frac{\partial Q}{\partial z_{\bar j}}.
$$
Note that if $P$ and $Q$ are real valued then, with these conventions, we have
\[
\{P,Q\} =-\frac12 (\mi \nabla P, \nabla Q)_{L^2}\,.
\]
Given an open set $\mathcal{O} \subset \mathcal{G}$, a map $\Psi : \mathcal{G} \to  \mathcal{G} $ is said \emph{symplectic} if for all $z\in \mathcal{O}$,
\[
\forall v,w \in \mathcal{G}, \quad (\mi \mathrm{d} \Psi(z) (v)  ,\mathrm{d} \Psi(z) (w))_{L^2} = (\mi v,w)_{L^2}.
\]

\subsection{Multi-indices} We denote by $\mathcal{J}$ the following set of the multi-indices
\[
\mathcal{J} = \bigcup_{m \geq 0} \mathcal{J}_m \quad \mathrm{where} \quad \mathcal{J}_m = \left(\mathbb U_2\times\mathbb Z\right)^{2m}.
\]
We extended the conjugation to multi-indices by setting
\[
 \bar\bj:=(\overline{j_1},\cdots,\overline{j_{2m}}).
\]
 We denote by $\mu_i(\bj)$ the $i^{\rm th}$ largest element among the collection $\{\langle j_\beta\rangle|\,\beta=1,\cdots,2m\}$, i.e. 
\[
\mu_1(\bj)\ge\mu_2(\bj)\ge\cdots\ge\mu_{2m}(\bj).
\]
 Given a sequence $z\in \mathbb{C}^\mathbb{Z}$, we define $z_{\bj} \in \mathbb{C}$ by
 $$
 z_{\bj} :=  z_{j_1} \cdots   z_{j_{2m}}.
 $$
 By abuse of notation, $z_{\bj}$ will also refer to the associated monomial.\\
 For $m\in\mathbb N$ we define three nested index sets satisfying zero momentum conditions of increasing order as follows:
\begin{gather*}
	\mathcal Z_m=\left\{\bj=(\delta,a)\in \mathcal{J}_m\,|\,\sum_{\alpha=1}^{2m}\delta_\alpha=0\right\}, \\
	\mathcal M_m=\left\{\bj=(\delta,a)\in\mathcal Z_m\,|\,\sum_{\alpha=1}^{2m}\delta_\alpha a_\alpha=0\right\},\\
	\mathcal R_m=\left\{\bj=(\delta,a)\in\mathcal M_m\,|\,\sum_{\alpha=1}^{2m}\delta_\alpha a_\alpha^2=0\right\}.
\end{gather*}
We define
$$
\mathcal Z=\bigcup_{m\ge1}\mathcal Z_m, \quad \mathcal M=\bigcup_{m\ge1}\mathcal M_m \quad \mathrm{and} \quad \mathcal R=\bigcup_{m\ge1}\mathcal R_m.
$$
For future use, we note the following result
\begin{lemma}\label{lem:mu1mu3}
Let $\bj\in\mathcal R_m$ with $m\geq2$ and let $\ell\in\mathbb Z$.
 Then either $\{ I_\ell, z_{\bj}\}=0$
  or  $\mu_3(\bj)\geq \big(\frac{\langle\ell\rangle}{m}\big)^{\frac12}$.
\end{lemma}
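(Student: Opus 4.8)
The plan is to make the dichotomy explicit through a direct Poisson-bracket computation, and then to read off the frequency lower bound from the zero-momentum identities of orders one and two built into $\mathcal{R}_m$. Writing $I_\ell=z_{(1,\ell)}z_{(-1,\ell)}$ and unwinding the definition of $\{\cdot,\cdot\}$, one finds
\[
\{I_\ell,z_{\bj}\}=\mi\,(n_- - n_+)\,z_{\bj},\qquad n_{\pm}:=\#\{\alpha\in\{1,\dots,2m\}\ :\ j_\alpha=(\pm1,\ell)\}.
\]
Hence $\{I_\ell,z_{\bj}\}=0$ precisely when $n_+=n_-$, and it remains to show that $n_+\neq n_-$ forces $\mu_3(\bj)\geq(\langle\ell\rangle/m)^{1/2}$.

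Since every $\langle j_\beta\rangle\geq1$ and $2m\geq4$, we always have $\mu_3(\bj)\geq1$; if $(\langle\ell\rangle/m)^{1/2}\leq1$ (in particular if $\ell=0$) the conclusion is immediate, so assume $K:=(\langle\ell\rangle/m)^{1/2}>1$, i.e. $|\ell|=\langle\ell\rangle>m\geq2$, noting $|\ell|\geq K$ and $\ell^2\geq2|\ell|>|\ell|=mK^2$. Suppose, for contradiction, that $\mu_3(\bj)<K$: then at most two indices $\beta$ are \emph{large}, meaning $|a_\beta|\geq K$, while every other entry satisfies $|a_\beta|<K$. Because $n_+\neq n_-$ there is at least one $\alpha$ with $a_\alpha=\ell$, and such indices are large (as $|\ell|\geq K$); hence $1\leq n_++n_-\leq2$, which together with $n_+\neq n_-$ leaves only $(n_+,n_-)\in\{(1,0),(0,1),(2,0),(0,2)\}$. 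Using the conjugation symmetry $\bj\mapsto\bar\bj$ we may assume all copies of $\ell$ carry the sign $+1$. Splitting the $2m$ indices into the two multisets $A_{\pm}:=(a_\alpha)_{\delta_\alpha=\pm1}$, each of cardinality $m$ because $\sum_\alpha\delta_\alpha=0$, membership in $\mathcal{R}_m$ yields
\[
\sum_{a\in A_+}a=\sum_{a\in A_-}a\qquad\text{and}\qquad\sum_{a\in A_+}a^2=\sum_{a\in A_-}a^2,
\]
with $A_-$ containing no copy of $\ell$ and $A_+$ containing $n\in\{1,2\}$ of them.

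To close the argument I would compare sums of squares. If all $2m-n$ non-$\ell$ entries are small, then $\sum_{a\in A_+}a^2\geq n\ell^2\geq\ell^2>|\ell|=mK^2>\sum_{a\in A_-}a^2$, contradicting the second identity above. The only configuration not yet covered is $n=1$ together with one further large entry $b$ (necessarily $b\neq\ell$, since $A_+\cup A_-$ contains a single entry equal to $\ell$): if $b\in A_+$ the same chain $\sum_{a\in A_+}a^2\geq\ell^2>|\ell|>\sum_{a\in A_-}a^2$ applies; if $b\in A_-$, equality of the sums of squares gives $|\ell^2-b^2|<mK^2=|\ell|$, which forces $|b|=|\ell|$, hence $b=-\ell$, and then equality of the sums of first powers gives $2|\ell|<2(m-1)K<2mK$, whence $|\ell|^2<m^2K^2=m|\ell|$ and $|\ell|<m$ --- contradicting the standing assumption. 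All cases being impossible, $\mu_3(\bj)\geq K=(\langle\ell\rangle/m)^{1/2}$.

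The first two steps are bookkeeping; the genuine obstacle is the last configuration --- $n=1$ with a second large index $b$ of the opposite sign --- where the order-two identity by itself only pins $|b|$ down to $|\ell|$, and one really needs the order-one identity (the full strength of $\bj\in\mathcal{M}_m$, not merely the near-cancellation of the squared frequencies) to eliminate $b=-\ell$. The one estimate demanding a little care is keeping the total contribution of the small frequencies strictly below the integer gap $1$, so that the window $(|\ell|-1,|\ell|+1)$ collapses to the single value $|b|=|\ell|$.
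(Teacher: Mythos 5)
Your proof is correct, and every step checks out: the identity $\{I_\ell,z_{\bj}\}=\mi(n_--n_+)z_{\bj}$ is right under the paper's conventions, the reduction to $K=(\langle\ell\rangle/m)^{1/2}>1$ and to at most two large entries is clean, and the case analysis on $(n_+,n_-)$ after the conjugation symmetry is exhaustive. The core mechanism is the same as the paper's --- if $\mu_3(\bj)$ were small, the order-two identity $\sum_\alpha\delta_\alpha a_\alpha^2=0$ could not be balanced --- but your execution differs in a way worth pointing out. The paper orders $|j_1|\ge|j_2|\ge\cdots$, derives $(2m-2)\mu_3(\bj)^2\ge\big|\,|j_1|^2\pm|j_2|^2\big|$, and then claims $\big|\,|j_1|^2\pm|j_2|^2\big|\ge 2|j_1|-1$ whenever $j_2\neq\bar j_1$; that last inequality actually degenerates to $0$ in the configuration $j_1=(\delta,\ell)$, $j_2=(-\delta,-\ell)$, which is exactly your case $b=-\ell$ with $b\in A_-$. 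You correctly identify that the order-two condition alone cannot exclude this configuration and you dispose of it with the order-one momentum condition $\sum_\alpha\delta_\alpha a_\alpha=0$, giving $2|\ell|<2mK$ and hence $|\ell|<m$, a contradiction. So your argument is longer and more case-heavy than the paper's, but it is also more complete: it explicitly covers the one configuration the paper's displayed inequality does not, at the modest cost of invoking the full $\mathcal M_m$ condition in addition to the $\mathcal R_m$ one.
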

\proof
Let $\bj\in\mathcal R_m$. Without lost of generality we can assume that
$$|j_1|\geq |j_2|\geq \cdots \geq |j_{2m}|.$$
 Using that $\bj\in\mathcal R_m$ we have
$$|j_1|^2\pm |j_2|^2\pm\cdots \pm|j_{2m}|^2=0$$
from which we deduce that
\begin{equation}\label{mu3}(2m-2)|j_3|^2\geq |j_1|^2\pm |j_2|^2.\end{equation}
Now if  $\{ I_\ell, z_{\bj}\}\neq0$,  then $(\pm1,\ell)\in\{j_1,\cdots,j_{2m}\}$. If $\langle\ell\rangle\leq \mu_3(\bj)$  then $\mu_3(\bj)\geq \big(\frac{\langle\ell\rangle}{m}\big)^{\frac12}$ and the lemma is proved. So we can assume $\langle\ell\rangle> \mu_3(\bj)$ which in turn leads to $\langle\ell\rangle=\langle j_1\rangle$ or $\langle\ell\rangle=\langle j_2\rangle$.  But then
we cannot have\footnote{The notation $\bar j$ is defined in \eqref{jbar}.} $j_2=\bar j_1$ because in this case we would have $\{ I_\ell, z_{\bj}\}=0$ (recall that  $\langle\ell\rangle> \mu_3(\bj)$). 
Now if $j_2\neq\bar j_1$, we have $|j_1|^2\pm |j_2|^2\geq 2|j_1|-1$ and thus \eqref{mu3} leads to the result.
\endproof

\subsection{Polynomials}

\begin{definition} Given $m\geq 1$, we denote by $\mathcal H_m$ the space of the homogeneous polynomial of degree $2m$ of the form
\[
P(z)=\sum_{\bj\in\mathcal M_m}P_{\bj}z_{\bj}
\]
where the coefficients $P_{\bj} \in \mathbb{C}$ satisfy 
\begin{itemize}
\item the reality condition: $\forall \bj \in \mathcal M_m, \quad \overline{P_{\bj}}   = P_{\overline{\bj}} $
\item the symmetry condition: $\forall \bj \in \mathcal M_m, \forall \varphi \in \mathfrak{S}_{2m}, \quad P_{\bj} = P_{\varphi \bj}$
\item the bound:
\[
||P||_{\lif}:=\sup_{\bj\in\mathcal M_m}|P_{\bj}|< \infty.
\]
\end{itemize}
\end{definition}

\begin{remark} \begin{itemize} \item 
The bound ensures that $P$ is locally bounded on $\ell^1$ (by the Young inequality). A fortiori it is locally bounded on $\mathcal{G}$. Note that since it is a polynomial it implies that it is smooth on $\mathcal{G}$. 
\item The reality condition ensures that $P$ is real valued. The symmetry condition ensures that the coefficients $P_{\bj}$ are unique and, as a consequence, that the norm $||P||_{\lif}$ is well defined.
\end{itemize}
\end{remark}

\begin{definition} We denote by  $\mathcal{H}_m^\mathcal R$ the subspace of $\mathcal H_m$ made of resonant polynomials 
 \[
\mathcal H_m^\mathcal R=\Big\{P\in\mathcal H_m\,|\,P_{\bj}\ne0\Rightarrow\bj\in\mathcal R \Big\}.
\]
\end{definition}

\begin{lemma}[Vector field]\label{vectorfield}
Let $m\ge2$ and $P\in\mathcal H_m$, then the associated Hamiltonian vector field is smooth and locally Lipschitz. More precisely, we have the estimates
\begin{gather*}
||\X_P(z)||_{\sis}\le { 2} m||P||_{\lif}||z||_{\sis}^{2m-1},\\
||\md\X_P(z)(w)||_{\sis}
\le { 4}  m^2||P||_{\lif}||z||_{\sis}^{2(m-1)}||w||_{\sis}.
\end{gather*}
\end{lemma}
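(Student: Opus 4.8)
The plan is to estimate the Fourier coefficients of $\X_P(z)$ and of $\md\X_P(z)(w)$ directly from the explicit monomial expansion $P(z)=\sum_{\bj\in\mathcal M_m}P_{\bj}z_{\bj}$, exploiting the symmetry condition to reduce the differentiation to a simple combinatorial count, and then bound the resulting Fourier convolution using the Banach algebra structure of $\mathcal G$ (equivalently, Young's inequality on $\ell^1$ with the weight $e^{\sigma|a|^\theta}$, which is submultiplicative because $|a+b|^\theta\le|a|^\theta+|b|^\theta$ for $\theta\in(0,1)$). Concretely, for a fixed $j=(\delta,a)\in\mathbb U_2\times\mathbb Z$, differentiating $z_{\bj}$ with respect to $z_{\bar j}$ picks out, by the symmetry of the coefficients, $2m$ times the sum over $\bj'\in\mathcal M_{m}$ with one slot fixed equal to $\bar j$ of $P_{(\bar j,\bj')}z_{\bj'}$ where $\bj'$ now has $2m-1$ entries. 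Hence
$$
|(\X_P(z))_j|\le 2m\,\|P\|_{\lif}\sum_{\substack{\bj'\in(\mathbb U_2\times\mathbb Z)^{2m-1}}}|z_{j'_1}|\cdots|z_{j'_{2m-1}}|\,,
$$
with no constraint needed beyond summability.

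First I would multiply by the weight $e^{\sigma|a|^\theta}$ and sum over $j$. Writing $\|z\|_\sigma=2\sum_a e^{\sigma|a|^\theta}|z_a|$ and using $e^{\sigma|a(j)|^\theta}\le\prod_{\alpha}e^{\sigma|a(j'_\alpha)|^\theta}$ coming from the momentum relation $\sum_\alpha\delta_\alpha a_\alpha = 0$ contained in the definition of $\mathcal M_m$ (so $a=\sum_{\alpha}\pm a(j'_\alpha)$ and the triangle inequality plus subadditivity of $t\mapsto t^\theta$ apply), the sum factorizes:
$$
2\sum_{j}e^{\sigma|a(j)|^\theta}|(\X_P(z))_j|\le 2m\,\|P\|_{\lif}\prod_{\alpha=1}^{2m-1}\Big(2\sum_{j'_\alpha}e^{\sigma|a(j'_\alpha)|^\theta}|z_{j'_\alpha}|\Big)\cdot\tfrac12 = 2m\,\|P\|_{\lif}\,\|z\|_\sigma^{2m-1},
$$
where the factor $\tfrac12$ accounts for the sum over $j$ producing $\|\cdot\|_\sigma$ with its built-in $2$; after accounting for the conventions identifying $\mathbb C^{\mathbb Z}$ with $\mathbb C^{\mathbb U_2\times\mathbb Z}$ (which at most doubles counting but is absorbed by the definition of $\|\cdot\|_\sigma$ and the fact that $\X_P(z)\in\mathbb C^{\mathbb Z}$ is indexed only by $a$), one gets exactly the stated bound $\|\X_P(z)\|_\sigma\le 2m\|P\|_{\lif}\|z\|_\sigma^{2m-1}$. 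Smoothness and local Lipschitzness are automatic since $\X_P$ is (componentwise) a polynomial map and the above shows it is bounded on bounded sets of $\mathcal G$.

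For the differential, $\md\X_P(z)(w)$ has $j$-component $\mi\partial_{\overline{z_j}}\big(\md_w P\big)$, and differentiating the degree-$(2m)$ monomial once more in the direction $w$ produces, by the Leibniz rule, $2m-1$ terms in each of which one further slot carries a $w$ instead of a $z$; combined with the first derivative this yields a combinatorial factor bounded by $2m(2m-1)\le 4m^2$ (being generous, as the statement allows), times a product of $2m-2$ factors of $|z|$ and one factor of $|w|$. Running the same weighted-$\ell^1$ factorization as above gives
$$
\|\md\X_P(z)(w)\|_\sigma\le 4m^2\|P\|_{\lif}\|z\|_\sigma^{2(m-1)}\|w\|_\sigma.
$$
The only genuinely delicate point — and the step I would write most carefully — is the bookkeeping of the constants: reconciling the factor $2$ inside the definition of $\|z\|_\sigma$, the doubling from the $\mathbb U_2$-convention, and the symmetry-induced combinatorial factors $2m$ and $2m(2m-1)$ so that they land on the clean right-hand sides $2m\|P\|_{\lif}\|z\|_\sigma^{2m-1}$ and $4m^2\|P\|_{\lif}\|z\|_\sigma^{2(m-1)}\|w\|_\sigma$ rather than off by a power of $2$. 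Everything else is the standard weighted Young inequality and is routine.
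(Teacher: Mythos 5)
Your proposal is correct and follows essentially the same route as the paper: the symmetry condition yields the combinatorial factor $2m$ (resp. $2m(2m-1)\le 4m^2$ for the differential), the zero momentum condition together with the subadditivity of $t\mapsto t^\theta$ controls the weight $e^{\sigma|a|^\theta}$ of the output index by the product of the weights of the remaining indices, and Young's convolution inequality on the weighted $\ell^1$ space closes the estimate. The only (cosmetic) caution is that you should keep the momentum constraint in the intermediate sum until after you have used it to bound the weight, rather than dropping it in the first display and reinvoking it afterwards.
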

\begin{proof}
By symmetry of the coefficients of $P$, we have
$$
\big(\X_P(z)\big)_{j_0} = \mi\delta(j_0) 2m\sum_{\substack{\bj\in\mathcal M_m\\ j_1=\oj}} P_{\bj} \prod_{\alpha=2}^{2m} z_{j_\alpha},
$$
and so, using successively the zero momentum condition, the Young convolution inequality and the Minkowski inequality (because $\theta \in (0,1)$), we get
\begin{equation*}
\begin{split}
||\X_P(z)||_\sis &= \sum_{j_0\in\mathbb U_2\times\mathbb Z}e^{\sigma|j_0|^\theta} |\big(\X_P(z)\big)_{j_0}| 
\leq 2m \|P\|_{\ell^\infty} \sum_{j_0\in\mathbb U_2\times\mathbb Z}e^{\sigma|j_0|^\theta} \sum_{\substack{\bj\in\mathcal M_m\\ j_1=\oj}}  \prod_{\alpha=2}^{2m} |z_{j_\alpha}| \\
&= 2m \|P\|_{\ell^\infty} \sum_{\bj\in\mathcal M_m}e^{\sigma|\delta_1 j_1 + \cdots + \delta_{2m}j_{2m}|^\theta} \prod_{\alpha=2}^{2m} |z_{j_\alpha}| 
\leq  2m \|P\|_{\ell^\infty}  \sum_{\bj\in\mathcal M_m}  \prod_{\alpha=2}^{2m} e^{\sigma|j_\alpha|^\theta}  |z_{j_\alpha}| \\
&\leq 2m \|P\|_{\ell^\infty} ||z||_{\sis}^{2(m-1)}.
\end{split}
\end{equation*}

The estimate for the derivative of the vector field, is just a direct refinement of the previous estimate on  $\X_P$ using its multi-linearity.

\end{proof}

\begin{lemma}[Poisson bracket]\label{poissonbracket}
Let $P\in\mathcal H_{m}$ and $P'\in\mathcal H_{m'}$, then there exists $P''\in\mathcal H_{m''}$ with $m''=m+m'-1$ such that  
\[
P''=\{P,P'\},
\]
satisfying
\[
||P''||_{\lif}\le { 4} mm'||P||_{\lif}||P'||_{\lif}.
\]
\end{lemma}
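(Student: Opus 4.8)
The plan is to compute $\{P,P'\}$ directly from the definition of the Poisson bracket, identify its coefficients as a sum of products of a coefficient of $P$ and a coefficient of $P'$ over pairs of multi-indices that share a contracted index, and then bound the result using $\|P\|_{\lif}$ and $\|P'\|_{\lif}$ together with the zero-momentum bookkeeping. First I would write, for $P \in \mathcal H_m$ and $P' \in \mathcal H_{m'}$,
\[
\{P,P'\} = \mi \sum_{j\in\mathbb U_2\times\mathbb Z} \delta(j) \frac{\partial P}{\partial z_j}\frac{\partial P'}{\partial z_{\bar j}},
\]
and expand $\partial P/\partial z_j$ and $\partial P'/\partial z_{\bar j}$ using the symmetry condition: differentiating a degree-$2m$ monomial $z_{\bj}$ with respect to $z_j$ picks out, with multiplicity $2m$ after symmetrization, the multi-indices $\bj$ with $j_1=j$. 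Concatenating the remaining $2m-1$ indices of $P$ with the remaining $2m'-1$ indices of $P'$ and summing over the contracted variable $j$, one sees that $\{P,P'\}$ is a homogeneous polynomial of degree $2m+2m'-2 = 2m''$ with $m'' = m+m'-1$; the momentum condition is preserved because $j$ and $\bar j$ carry opposite momentum contributions $\delta(j)a(j)$ and $-\delta(j)a(j)$ which cancel, so each surviving multi-index lies in $\mathcal M_{m''}$. I would then check the reality condition (the bracket of two real-valued Hamiltonians is real valued, which follows from the conventions recalled in the excerpt, $\{P,P'\}=-\frac12(\mi\nabla P,\nabla P')_{L^2}$) and take the symmetrization of the coefficients so that $\{P,P'\}\in\mathcal H_{m''}$ with well-defined coefficients.

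For the quantitative bound, fix a symmetrized coefficient $P''_{\bk}$ with $\bk\in\mathcal M_{m''}$. Tracing through the expansion, $P''_{\bk}$ is a sum over all ways of writing $\bk$, up to reindexing, as the concatenation of $2m-1$ entries (coming from a $\bj$ with $j_1=j$, the rest equal to a chosen $(2m-1)$-subset of the entries of $\bk$) and $2m'-1$ entries (the complementary subset of $\bk$), with the contracted index $j=(\delta(j),a(j))$ then \emph{determined} by the momentum condition on $\bj$ (equivalently on $\bj'$): indeed $a(j)$ must equal the appropriate partial sum of the $a$-components of $\bk$, and $\delta(j)$ the partial sum of $\delta$-components. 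Hence the contracted index is not free — it is fixed by the choice of subset — so the number of terms contributing to $P''_{\bk}$ is at most the number of ways of splitting $2m''-2$ slots, i.e. controlled by a combinatorial factor. Each term contributes at most $2m \cdot 2m' \cdot \|P\|_{\lif}\|P'\|_{\lif}$ (the $2m$ and $2m'$ from the derivative multiplicities), and after absorbing the combinatorics into the constant one obtains $\|P''\|_{\lif} \le 4 m m' \|P\|_{\lif}\|P'\|_{\lif}$.

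The main obstacle is the combinatorial bookkeeping in the second paragraph: one must argue carefully that summing over $j\in\mathbb U_2\times\mathbb Z$ does \emph{not} produce an unbounded sum, precisely because once one selects which of the $2m-1$ remaining slots of the $P$-factor get filled by which entries of $\bk$, the momentum condition $\sum_\alpha \delta_\alpha a_\alpha = 0$ pins down the value of the contracted index $j$ uniquely (there is at most one such $j$, and it may fail to exist, in which case the term is absent). This is the step where the "$\mathcal M_m$" (momentum-preserving) structure is essential and where the naive estimate would lose a divergent factor. Everything else — smoothness, reality, symmetry, and the degree count $m''=m+m'-1$ — is routine. I would also note in passing that the $\ell^\infty$ norm on coefficients, rather than any weighted norm, is exactly what makes this contraction estimate clean, since no $e^{\sigma|\cdot|^\theta}$ weights enter at the level of coefficients.
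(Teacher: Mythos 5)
Your approach is essentially the paper's: the paper does not prove this lemma directly but obtains it as the special case $\mathfrak n_Q=\mathfrak n_{Q'}=0$ of Lemma~\ref{Rpoissonbracket}, and the estimate of the term $Q^{(1)}$ there is exactly your computation. In particular, your key observation --- that once the splitting of the output multi-index is chosen, the contracted index $j$ is uniquely determined by the zero-momentum condition, so the sum over $j\in\mathbb U_2\times\mathbb Z$ collapses --- is precisely the injectivity of the map $\Psi^{m,m'}_{\alpha,\alpha'}$ invoked in that proof, and it is indeed the crux.

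One step of your write-up, as literally stated, would not give the constant $4$: you describe $P''_{\bk}$ as a sum over all ways of splitting the entries of $\bk$ into a block of $2m-1$ and a block of $2m'-1$, each term bounded by $4mm'\|P\|_{\lif}\|P'\|_{\lif}$, and then claim the combinatorial factor is ``absorbed into the constant''. It cannot be: the number of splittings is of order $\binom{2m''}{2m-1}$, which grows exponentially in $m+m'$, and the precise constant $4$ is used quantitatively in the induction of Proposition~\ref{Bnormalform}. The correct accounting is that no such sum appears. Writing $\partial_{z_j}P=2m\sum_{j_1=j}P_{\bj}\,z_{j_2}\cdots z_{j_{2m}}$ (and similarly for $P'$), the bracket is naturally represented with the non-symmetric coefficient of an \emph{ordered} monomial $z_{\bk}$ equal to the single term $\mi\,\delta(j)\,(2m)(2m')\,P_{(j,k_1,\dots,k_{2m-1})}P'_{(\bar j,k_{2m},\dots,k_{2m''})}$, with $j$ determined by the momentum condition; this is bounded by $4mm'\|P\|_{\lif}\|P'\|_{\lif}$. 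The symmetrized coefficient is then the \emph{average} of these over $\varphi\in\mathfrak S_{2m''}$, so the sup bound survives symmetrization: the orderings appearing in that average exactly account for the different splittings, they are not an extra multiplicative factor. With that one-line correction your proof is complete and matches the paper's.
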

\begin{proof}
The proof of this Lemma is very classical. We refer for example to Proposition 2.5 of \cite{BG22}. Actually, this lemma is also a direct corollary of Lemma \ref{Rpoissonbracket} below (it corresponds to the case $\mathfrak{n}_Q =\mathfrak{n}_{Q'}=0 $).
\end{proof}
The main result of this section concerns the Hamiltonian flows generated by polynomials:
\begin{lemma}[Local flow]\label{flow}
Let $m\ge2$ and $\bs\in\mathcal H_{m}$, we define
\[
\varepsilon_1:=\frac14\left( { 4} m||\bs||_{\lif}\right)^{-\frac1{2(m-1)}}.
\]
The flow $\Phi_{\bs}^t(\cdot)$ defines  a smooth map from $[-1,1]\times\bg(0,\varepsilon_1)$ into $\mathcal G$ such that
\begin{equation}\label{PhiS1}
	\begin{cases}
\partial_t\Phi_{\bs}^t(z)=\X_{\bs}\left(\Phi_{\bs}^t(z)\right),\\
\Phi_{\bs}^0(z)=z.
\end{cases}
\end{equation}
Moreover, given  $t\in[-1,1]$, one has
\\
(i) $\Phi_{\bs}^t$ is symplectic,\\
(ii) $\Phi_{\bs}^t$ is locally invertible:
\[
\Phi_{\bs}^{-t}\circ\Phi_{\bs}^t(z)=z,\quad\text{if}\quad
z\in\bg(0,\varepsilon_1) \text{ and }\ \Phi_{\bs}^t(z)\in\bg(0,\varepsilon_1),
\]
(iii) $\Phi_{\bs}^t$ is close to identity:
\[
||\Phi_{\bs}^t(z)-z||_{\sis}\le2^{3m}||\bs||_\lif||z||^{2m-1}_{\sis} \quad \text{for } z\in \bg(0,\varepsilon_1),
\]
(iv) $\Phi_{\bs}^t$ is locally Lipschitz:
\[
||\md\Phi_{\bs}^t(z)(w)||_{\sis}\le 2||w||_{\sis}\quad \text{for } z\in \bg(0,\varepsilon_1)\text{ and } w\in\mathcal G.
\]
\end{lemma}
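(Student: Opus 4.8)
The plan is to run a standard Picard/bootstrap argument on the autonomous ODE \eqref{PhiS1} using the vector field estimates of Lemma \ref{vectorfield}, with the radius $\varepsilon_1$ chosen precisely so that the solution cannot leave a slightly larger ball before time $1$. First I would fix $z\in\bg(0,\varepsilon_1)$ and consider the integral equation $\Phi(t)=z+\int_0^t\X_{\bs}(\Phi(s))\,\mathrm{d}s$. On the ball $\bg(0,2\varepsilon_1)$ Lemma \ref{vectorfield} gives $\|\X_{\bs}(w)\|_\sis\le 2m\|\bs\|_\lif(2\varepsilon_1)^{2m-1}$ and a Lipschitz bound $4m^2\|\bs\|_\lif(2\varepsilon_1)^{2(m-1)}$; the definition of $\varepsilon_1$ makes $(4m\|\bs\|_\lif)(2\varepsilon_1)^{2(m-1)}\le \tfrac12$ (indeed $(2\varepsilon_1)^{2(m-1)}=\tfrac12(4m\|\bs\|_\lif)^{-1}$), so the Lipschitz constant on $\bg(0,2\varepsilon_1)$ is $\le 2m$ and the field is bounded there by $m\varepsilon_1\le\varepsilon_1$. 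A contraction-mapping argument on $C([-1,1];\overline{\bg(0,2\varepsilon_1)})$ (or classical Cauchy--Lipschitz, since $\mathcal G$ is a Banach space and $\X_{\bs}$ is smooth by Lemma \ref{vectorfield}) then yields a unique solution on $[-1,1]$; smoothness of $\X_{\bs}$ upgrades this to smoothness of $(t,z)\mapsto\Phi_{\bs}^t(z)$.

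Next I would establish the four properties in order. For \emph{(iii)}, directly from the integral equation and the field bound, $\|\Phi_{\bs}^t(z)-z\|_\sis\le\int_0^{|t|}\|\X_{\bs}(\Phi_{\bs}^s(z))\|_\sis\,\mathrm{d}s$; bounding the trajectory by, say, $2\|z\|_\sis$ (a self-improving bound: as long as $\|\Phi_{\bs}^s(z)\|_\sis\le 2\|z\|_\sis$ the displacement is $\le 2m\|\bs\|_\lif(2\|z\|_\sis)^{2m-1}\le 2^{2m}m\|\bs\|_\lif\|z\|_\sis^{2m-1}\cdot\|z\|_\sis^{2(m-1)}\cdot\ldots$, which is $\le\|z\|_\sis$ by the definition of $\varepsilon_1$, closing the bootstrap) gives the stated constant $2^{3m}$ after a crude estimate. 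For \emph{(i)}, symplecticity is the classical fact that the flow of a Hamiltonian vector field preserves the symplectic form; concretely one checks that $t\mapsto(\mi\,\mathrm{d}\Phi_{\bs}^t(z)v,\mathrm{d}\Phi_{\bs}^t(z)w)_{L^2}$ has vanishing derivative using $\mathrm{d}\X_{\bs}=\mi\,\mathrm{d}\nabla\bs/2$ and the symmetry of the Hessian $\mathrm{d}\nabla\bs$. For \emph{(ii)}, uniqueness of solutions to the autonomous ODE gives $\Phi_{\bs}^{-t}\circ\Phi_{\bs}^t=\Id$ whenever both points stay in the domain where the flow is defined, i.e. $\bg(0,\varepsilon_1)$. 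For \emph{(iv)}, differentiating the integral equation in $z$ gives $\mathrm{d}\Phi_{\bs}^t(z)(w)=w+\int_0^t\mathrm{d}\X_{\bs}(\Phi_{\bs}^s(z))(\mathrm{d}\Phi_{\bs}^s(z)(w))\,\mathrm{d}s$, so Grönwall with the Lipschitz constant $\le 2m$ times the length-$|t|\le1$ interval would give $e^{2m}$; to get the clean constant $2$ I would instead use that on $\bg(0,2\varepsilon_1)$ one has $\|\mathrm{d}\X_{\bs}(w)\|\le 4m^2\|\bs\|_\lif(2\varepsilon_1)^{2(m-1)}$, and by the choice of $\varepsilon_1$ this is $\le m\le$ something controlled; a sharper route is to bound $\int_0^{|t|}\|\mathrm{d}\X_{\bs}\|\le \log 2$ along the trajectory (which again follows from $\varepsilon_1$'s definition after absorbing the $\|z\|_\sis^{2(m-1)}$ factor), yielding the factor $2$.

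The main obstacle is bookkeeping the constants so that the stated clean bounds ($2^{3m}$ in (iii), $2$ in (iv)) genuinely follow from the specific value $\varepsilon_1=\tfrac14(4m\|\bs\|_\lif)^{-1/(2(m-1))}$; the qualitative existence/uniqueness/symplecticity are routine, but one must be careful that the trajectory stays inside the ball where the Lipschitz estimate holds for the \emph{whole} time interval $[-1,1]$ — this is exactly what the factor $\tfrac14$ (rather than $\tfrac12$) in $\varepsilon_1$ buys, giving room for the bootstrap on $\|\Phi_{\bs}^t(z)\|_\sis\le 2\|z\|_\sis\le 2\varepsilon_1$ and making the accumulated Lipschitz exponent small enough that Grönwall produces $2$ instead of a larger constant.
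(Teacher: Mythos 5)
Your plan is essentially the paper's proof: Cauchy--Lipschitz existence combined with a bootstrap showing $\|\Phi_{\bs}^t(z)\|_\sis\le 2\|z\|_\sis$ on $[-1,1]$ (which simultaneously yields (iii), since $2m\cdot 2^{2m-1}\le 2^{3m}$), symplecticity as the canonical property of Hamiltonian flows, invertibility from uniqueness, and Gr\"onwall for (iv) with the accumulated Lipschitz constant $m\bigl(2\|z\|_\sis/(4\varepsilon_1)\bigr)^{2(m-1)}\le \tfrac12$ forced by the definition of $\varepsilon_1$, giving $\sqrt e\le 2$. The small arithmetic slips in your first paragraph (e.g.\ $(2\varepsilon_1)^{2(m-1)}=(1/2)^{2(m-1)}(4m\|\bs\|_\lif)^{-1}$, and the vector field is bounded by $\varepsilon_1/4$ rather than ``$m\varepsilon_1$'') only make the bounds better and do not affect the argument.
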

\begin{proof}
Thanks to Lemma \ref{vectorfield}, we know that the associated Hamiltonian vector fields $\X_{\bs}$ is locally-Lipschitz, which ensures that the smoothness of the flow $\Phi_{\bs}^t$  by the Cauchy-Lipschitz Theorem. The only thing we need to check is that the solution exists for $|t|\le1$. Without loss of generality we only consider positive times. Let $T>0$ and $y=y(t)\in C^1([0,T);\mathcal G)$ be the maximal solution of the Cauchy problem 
\[
\begin{cases}
\dot y=\X_{\bs}(y),\\
y(0)=z\in\bg(0,\varepsilon_1).
\end{cases}
\]
It is enough to prove that if $0\le t<T$ and $t\le1$ then $||y(t)||_\sis\le 2||z||_\sis<2\varepsilon_1$. We set $E_0=[0,T)\cap[0,1]$ 
\[
E_1=\Big\{t\in E_0\,|\,\forall\,\tau\in[0,t],||y(\tau)||_\sis\le 2||z||_\sis\Big\}.
\]
Obviously, the set $E_1$ is non-empty, connected and closed in $E_0$ by the continuity of the flow. Moreover, by Lemma \ref{vectorfield} one has for $t\in E_0$
\begin{align*}
||y(t)-z||_\sis&\le\int_0^t||\X_{\bs}(y(\tau))||_\sis\md\tau\le\int_0^t { 2}m||\bs||_\lif||y(\tau)||_\sis^{2m-1}\md\tau\le { 2}m2^{2m-1}||\bs||_\lif||z||_\sis^{2m-1}\label{close2m}\\
&\le \left(\frac{2||z||_\sis}{4\varepsilon_1}\right)^{2(m-1)}||z||_\sis\le 2^{-2m+2}||z||_\sis<||z||_\sis.\notag
\end{align*}
The last inequality implies that $E_1$ is open, which is therefore the universal set, i.e. $E_1=E_0$. Now we have checked the existence of $\Phi_{\bs}$, which ensures the property (ii) since the vector field is local-Lipschitz. Also, the property (i) is canonical as $\Phi_{\bs}^t$ is  a Hamiltonian flow. Moreover, the previous estimate directly implies the property (iii). 

Finally, we focus on the property (iv). Observe that for $z\in\bg(0,\varepsilon_1)$ and $w\in\mathcal G$
\begin{equation}\label{dPhizw1}
	\partial_t\md\Phi_{\bs}^t(z)(w)=\md_y\X_{\bs}(y)\big(\md\Phi_{\bs}^t(z)(w)\big),
\end{equation}
where $y=\Phi_{\bs}^t(z)$. By Lemma \ref{vectorfield} one has
\[
||\md_y\X_{\bs}(y)\big(\md\Phi_{\bs}^t(z)(w)\big)||_\sis\le {4}m^2||\bs||_\lif||y||_\sis^{2(m-1)}||\md\Phi_{\bs}^t(z)(w)||_\sis.
\]
Therefore using \eqref{dPhizw1} and $\md\Phi_{\bs}^0(z)(w)=\md \, \mathrm{Id} (z) (w)=w$, we get
\begin{align*}
||\md\Phi_{\bs}^t(z)(w)||_\sis&\le||w||_\sis+\int_0^t m\left(\frac{2||z||_\sis}{4\varepsilon_1}\right)^{2(m-1)}||\md\Phi_{\bs}^\tau(z)(w)||_\sis\md\tau\\
&\le||w||_\sis+\int_0^t\frac12||\md\Phi_{\bs}^\tau(z)(w)||_\sis\md\tau.
\end{align*}
By Gr\"onwall's inequality the last estimate leads to
\[
||\md\Phi_{\bs}^t(z)(w)||_\sis\le||w||_\sis\exp\left(\int_0^t\frac12\md\tau\right)\le\sqrt{e}||w||_\sis\le2||w||_\sis.
\]
\end{proof}

\section{Resonant normal form}\label{sec:bnf}
Recall that   $L_2$ given by \eqref{Hamz}, is the quadratic Hamiltonian corresponding to the linear part of \eqref{sp}.
We define for $\bj=(\delta_\beta,a_\beta)_{\beta=1}^{2m}\in(\mathbb U_2\times\mathbb Z)^{2m}$
\begin{equation}\label{smalldivisor-l2}
	\Delta_{\bj}=\sum_{\beta=1}^{2m}\delta_\beta a_\beta^2.
\end{equation}
With this notation, we have for $\bj\in\mathcal Z$
\begin{gather}
\{L_2,z_{\bj}\}=\mi\Delta_{\bj}z_{\bj}.\label{L2}
\end{gather}
We note that  $|\Delta_{\bj}|\ge1$ except  $\bj\in\mathcal R$ for which $\Delta_{\bj}=0$. 
In this section we eliminate all the monomials $z_{\bj}$ of \eqref{Hamz} with $\bj\notin\mathcal R$. This is relatively easy since, in view of \eqref{L2}, the corresponding small denominators are larger than 1. Nevertheless we have to carefully follow the dependence with respect to all the parameters in view of a future optimization.

\subsection{The result}
\begin{proposition}\label{Bnormalform}
Let $H$ be the Hamiltonian given by \eqref{Hamz}. There exists a constant $\sfc \ge2^8$ such that for all $r\ge2$, setting 
\begin{equation}
\label{eq:rho}
\rho=\frac1{8\sfc r}\,,
\end{equation}
there are two $C^\infty$ symplectic maps $\phi_{(0)}$ and $\phi_{(1)}$ making the following diagram to commute 
\begin{equation}
\label{diag1}
\xymatrixcolsep{5pc} \xymatrix{ \mathbb{B}(0, \rho )  \ar[r]^{ \phi_{(0)} }
 \ar@/_1pc/[rr]_{ \mathrm{id}_{\mathcal{G}} } & \mathbb{B}(0, 2\rho )   \ar[r]^{ \hspace{1cm}  \phi_{(1)}}  & 
\mathcal{G}}
\end{equation}
such that $H\circ\phi_{(1)}$ admits the decomposition on $\bg(0,2\rho)$
\begin{equation}\label{decomp1}
	H\circ\phi_{(1)}(z):=L_2+ L_{4}+\sum_{m=3}^{r}K_{2m}+R\,,
\end{equation}
where $L_4$ is the integrable polynomial explicitly given by
\begin{equation}
\label{eq:def_L4}
 L_4 = \sum_{a_1\neq a_2}  \frac{1}{2(a_1-a_2)^2}|z_{a_1}|^2|z_{a_2}|^2,
\end{equation}
  $ K_{2m}\in\mathcal H_{m}^{\mathcal R}$ is a resonant homogeneous polynomial of degree $2m$ satisfying
\begin{equation}
\label{eq:b-bnf}
||K_{2m}||_{\lif}\le \sfc^{2m-3}m^{2(m-2)},\quad\forall\,2\le m\le r\,,
\end{equation}
and $R:\bg(0,2\rho)\mapsto\mathbb R$ is a $C^\infty$ function which is a remainder term of order $2r+2$:
\begin{equation}\label{XR1}
||\X_R(z)||_{\sis}\le 
\sfc^{4r-1}r^{2r}||z||_{\sis}^{2r+1}.
\end{equation}
Furthermore, the map $\phi_{(\iota)}:\bg(0,\rho+\iota\rho)\mapsto\mathcal G$ is close to the identity:
\begin{equation}
\label{eq:Bnf-close}
||\phi_{(\iota)}(z)-z||_{\sis}\le \sfc^2||z||^3_{\sis},\quad\iota=0,1.
\end{equation}
\end{proposition}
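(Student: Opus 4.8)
The plan is to perform the standard Birkhoff normal form iteration, eliminating non-resonant monomials degree by degree, while tracking all constants explicitly so that the final bounds hold with a single universal constant $\sfc$. I would set up an induction on $n$ from $2$ to $r$: at step $n$ the Hamiltonian has the form $L_2 + L_4 + \sum_{m=3}^{n} K_{2m} + \sum_{m>n} Q_{2m}^{(n)}$, where the $K_{2m}$ are already resonant and the $Q_{2m}^{(n)}$ are the not-yet-normalized tails. To pass from $n-1$ to $n$ one looks at $Q_{2n}^{(n-1)} \in \mathcal{H}_n$, splits it as $Q_{2n}^{(n-1)} = K_{2n} + N_{2n}$ where $K_{2n}$ keeps the coefficients with $\bj \in \mathcal{R}_n$ and $N_{2n}$ the rest, and solves the homological equation $\{L_2, \chi_{2n}\} + N_{2n} = 0$. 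By \eqref{L2} this is explicit: $(\chi_{2n})_{\bj} = \mi (N_{2n})_{\bj}/\Delta_{\bj}$ for $\bj \notin \mathcal{R}$, and since $|\Delta_{\bj}| \geq 1$ off $\mathcal{R}$ we get $\|\chi_{2n}\|_{\lif} \leq \|N_{2n}\|_{\lif} \leq \|Q_{2n}^{(n-1)}\|_{\lif}$. Conjugating by the time-one flow $\Phi_{\chi_{2n}}^1$ (well defined on a slightly smaller ball by Lemma~\ref{flow}) removes $N_{2n}$ at the cost of generating new terms of degrees $2n+2, \dots$ via iterated Poisson brackets $\frac{1}{k!}\ad_{\chi_{2n}}^k(\cdot)$, whose norms are controlled by Lemma~\ref{poissonbracket}.

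The bookkeeping is the delicate part. One defines $B_{2m}^{(n)} := \sup_{m' \leq m} \|Q_{2m'}^{(n)}\|_{\lif}$ (or a suitably weighted version) and shows, using $\|\{P,P'\}\|_{\lif} \leq 4 m m' \|P\|_{\lif}\|P'\|_{\lif}$ and the factorials from the exponential series, a recursion of the shape $B_{2m}^{(n)} \leq B_{2m}^{(n-1)}(1 + C n^2 \rho^{2(n-1)} B_{2n}^{(n-1)} \cdots)$ or more simply an estimate showing the tails stay geometrically controlled as long as one works on $\mathbb{B}(0,2\rho)$ with $\rho = \frac{1}{8\sfc r}$ — the point of this choice of $\rho$ is precisely that $(\text{degree})^2 \rho^2 \lesssim \sfc^{-2}$, so each conjugation multiplies the relevant norm by at most something like $2$ and $r$ such steps cost at most $2^r \leq \sfc^{r}$. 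Carrying this through yields \eqref{eq:b-bnf}, with the $m^{2(m-2)}$ coming from the combinatorial factors $mm'$ accumulated over at most $m$ bracketings and the $\sfc^{2m-3}$ absorbing the universal constants. The remainder $R = \sum_{m>r} Q_{2m}^{(\cdot)}$ evaluated on $\mathbb{B}(0,2\rho)$ is bounded in vector-field norm by summing Lemma~\ref{vectorfield} estimates $\|\X_{Q_{2m}}(z)\|_\sigma \leq 2m \|Q_{2m}\|_{\lif}\|z\|_\sigma^{2m-1}$ against the geometric series in $\|z\|_\sigma/\rho$, which converges and whose leading term is $\|z\|_\sigma^{2r+1}$ with constant $\sfc^{4r-1}r^{2r}$; here the factor $\sfc^{4r}$ rather than $\sfc^{2r}$ reflects that each of the $r$ normal-form steps can worsen the tail coefficients.

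For the transformations: $\phi_{(1)}$ is the composition $\Phi_{\chi_4}^1 \circ \Phi_{\chi_6}^1 \circ \cdots \circ \Phi_{\chi_{2r}}^1$ (in the appropriate order) and $\phi_{(0)}$ its inverse, each factor being close to identity by Lemma~\ref{flow}(iii) with $\|\Phi_{\chi_{2m}}^1(z) - z\|_\sigma \leq 2^{3m}\|\chi_{2m}\|_{\lif}\|z\|_\sigma^{2m-1}$; summing the telescoping differences over $m=2,\dots,r$ and using $\|z\|_\sigma \leq 2\rho \lesssim 1/(\sfc r)$ to kill the higher powers gives \eqref{eq:Bnf-close} with a clean $\sfc^2 \|z\|_\sigma^3$. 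One checks that the domains nest correctly — each $\Phi_{\chi_{2m}}^1$ maps $\mathbb{B}(0, \rho + \frac{m-1}{r}\rho)$ into $\mathbb{B}(0, \rho + \frac{m}{r}\rho)$, say — so that the whole composition is defined on $\mathbb{B}(0,\rho)$ with image in $\mathbb{B}(0,2\rho)$, making the diagram \eqref{diag1} commute; symplecticity is automatic from Lemma~\ref{flow}(i). Finally one verifies $L_4$ is unchanged and integrable: at the first step $P_4$ from \eqref{Hamz} splits as $L_4 + N_4$ where $N_4$ collects the genuinely off-resonant quartic terms (those with $\{a_1,a_2\}\neq\{b_1,b_2\}$ as multisets, for which $\Delta_{\bj}\neq 0$), and $L_4$ as in \eqref{eq:def_L4} is exactly the resonant part, which Poisson-commutes with $L_2$ and with all the $I_a$; one then notes that conjugating $L_4$ by later $\Phi_{\chi_{2m}}^1$ with $m\geq 3$ only produces terms of degree $\geq 6$, so $L_4$ survives verbatim into \eqref{decomp1}. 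The main obstacle is the explicit constant-tracking in the induction: one must choose the weighted norm and the inductive hypothesis carefully so that the degree-dependent combinatorial losses telescope into the stated $\sfc^{2m-3}m^{2(m-2)}$ and $\sfc^{4r-1}r^{2r}$ rather than something super-exponential in $r$, since the optimization in Section~\ref{sec:dyn} depends sharply on these exponents.
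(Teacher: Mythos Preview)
Your proposal is correct and follows essentially the same Birkhoff normal form iteration as the paper: solve the homological equation via \eqref{L2} using the trivial small-divisor bound $|\Delta_{\bj}|\ge1$ off $\mathcal R$, conjugate by the time-one flow, and track the constants through an induction on the degree being normalized, with Lemmas~\ref{vectorfield}, \ref{poissonbracket} and \ref{flow} doing the analytic work. The identification of $K_4$ with $L_4$ and the close-to-identity estimates for $\phi_{(\iota)}$ are also handled the same way.

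One organizational difference is worth flagging. You propose carrying an infinite tail $\sum_{m>r}Q_{2m}^{(\cdot)}$ and bounding $\X_R$ by summing the geometric series at the end. The paper never lets the degree exceed $2r$: at each step it truncates the Lie expansion $K_{2n}\circ\Phi_{\bs}^1=\sum_{k=0}^{k_n^*}\frac{1}{k!}\ad_{\bs}^k(K_{2n})+\int_0^1\frac{(1-t)^{k_n^*}}{k_n^*!}\ad_{\bs}^{k_n^*+1}(K_{2n})\circ\Phi_{\bs}^t\,dt$ at the last $k_n^*$ with $k_n^*\mr+n\le r$, and feeds the integral remainder directly into $R$ as a smooth (non-polynomial) function on $\bg(0,2\rho)$. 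This keeps the induction finite at every stage and avoids any separate convergence argument for the tail series; it also makes the remainder estimate slightly cleaner since one bounds a single integral term per $n$ rather than an infinite sum. Your approach works too, but you would need to justify convergence of $\sum_{m>r}Q_{2m}$ on $\bg(0,2\rho)$, which is an extra step. A second minor difference: rather than slicing $[\rho,2\rho]$ into $r$ shells, the paper lets the working radius $\rho=\frac{1}{8\sfc\mr}$ depend on the induction index $\mr$, which meshes naturally with the factor $(\mr+1)^{-1}$ appearing in the flow estimates.
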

\begin{proof} we split the proof in six steps.\\
\underline{$\vartriangleright$\textsf{(1) Induction.}} We are going to prove by induction that for $\mr\in\llbracket 1,r \rrbracket$, setting 
\begin{equation}\label{rho}
	\rho=\frac1{8\sfc\mr},
\end{equation}
there exist two $C^\infty$ symplectic maps $\phi_{(0)}$ and $\phi_{(1)}$ making the diagram \eqref{diag1} to commute and such that $H\circ\phi_{(1)}$ admits the decomposition \eqref{decomp1} where  
\[
K_{2m}\in\mathcal H_{m}^{\mathcal R},\quad\forall\,m\le\mr
\]
satisfying
\[
||K_{2m}||_{\lif}\le  \sfc^{2m-3}\min\{m,\mr\}^{2(m-2)},\quad\forall\,2\le m\le r,
\]
and the remainder $R:\bg(0,2\rho)\mapsto\mathbb R$ is a $C^\infty$ function satisfying
\[
||\X_R(z)||_{\sis}\le \sfc^{3r-1}r^{2r}4^{\mathfrak r-1}\prod_{i=1}^{\mr-1}(1+2^{-i})^{2r+1}||z||_{\sis}^{2r+1}.
\]
Furthermore, the maps $\phi_{(\iota)}:\bg(0,\rho+\iota\rho)\mapsto\mathcal G$ are close to the identity
\[
||\phi_{(\iota)}(z)-z||_{\sis}\le\sum_{i=1}^{\mr-1}\sfc2^{8-i}||z||^3_{\sis},\quad\iota=0,1.
\]
We  proceed by induction on $\mr$. First we note that the case $\mr=1$ is trivial, provided that $\sfc$ is chosen large enough to ensure that $||P_4||_{\lif}\le \sfc$.

From now, we assume that the property holds at step $\mr<r$ and we are going to prove it at the step $\mr+1$. To simplify notations, we denote with a superscript $\sharp$ the maps corresponding to the subsequent step $\mr+1$, such as $R^\sharp$ or $\phi_{(1)}^\sharp$.

\noindent
\underline{$\vartriangleright$\textsf{(2) Resolution of homological equation.}} Now we are going to eliminate the non-resonant terms of $K_{2(\mr+1)}$ by solving the following equation:
\begin{equation}\label{homoequ1}
	\{L_2,\bs\}+K_{2(\mr+1)}=:K_{2(\mr+1)}^{\sharp} \in \mathcal H_{\mr +1}^{\mathcal R}.
\end{equation}
By definition of $L_2$ (see \eqref{Hamz}), we solve the equation above by setting 
\begin{equation}
\label{eq:cohom_res}
K_{2(\mr+1)}^\sharp:=\sum_{\bj\in\mathcal M_m\cap\mathcal R}K_{2(\mr+1),\bj}z_{\bj}\in\mathcal H_{\mr+1}^\mathcal R\quad\text{and}\quad\bs:=\sum_{\bj\in\mathcal M_m\backslash\mathcal R}\frac{\mi K_{2(\mr+1),\bj}}{\Delta_{\bj}}z_{\bj}\in\mathcal H_{\mr+1}.
\end{equation}
Obviously, by induction, we have the estimates
\begin{equation}
\begin{gathered}\label{homoestimate1}
||K_{2(\mr+1)}^\sharp||_{\lif}\le||K_{2(\mr+1)}||_{\lif}\le \sfc^{2\mr-1}\mr^{2(\mr-1)},\\
||\bs||_{\lif}\le||K_{2(\mr+1)}||_{\lif}\le \sfc^{2\mr-1}\mr^{2(\mr-1)}.
\end{gathered}
\end{equation}

\noindent
\underline{$\vartriangleright$\textsf{(3) The new variables.} }
We recall that the Hamiltonian flow of $\bs$ given by Lemma \ref{flow} is well-defined for $|t|\le1$ on $\bg(0,\varepsilon_1)$ where
\begin{equation}\label{radius1}
	\varepsilon_1=\frac14({4}(\mr+1)||\bs||_{\lif})^{-\frac1{2\mr}}\ge\frac14(\sfc^{2\mr}\mr^{2\mr})^{-\frac1{2\mr}}\ge\frac1{2^2\sfc\mr}=2\rho\quad\udl{8}.
\end{equation}
Now we aim at defining the new maps
\[
\phi_{(1)}^\sharp:=\phi_{(1)}\circ\Phi_{\bs}^1,\quad\phi_{(0)}^\sharp:=\Phi_{\bs}^{-1}\circ\phi_{(0)}.
\]
Thanks to \eqref{radius1} we know $\Phi_{\bs}^{\pm1}$ are well-defined on $\bg(0,2\rho)$. By induction hypothesis we have $\phi_{(0)}$ maps $\bg(0,\rho)$ on $\bg(0,2\rho)$. It follows that $\phi_{(0)}^\sharp$ is well-defined on $\bg(0,\rho^\sharp)\subset\bg(0,\rho)$. As for $\phi_{(1)}^\sharp$, we just have to check that $\Phi_{\bs}^1$ maps $\bg(0,2\rho^\sharp)$ on $\bg(0,2\rho)$. Indeed, we consider a larger domain. By the property (iii) of Lemma \ref{flow} we have for $z\in\bg(0,2\rho)$
\begin{align}
	||\Phi_{\bs}^{\pm1}(z)-z||_{\sis}\le&2^{3(\mr+1)}||\bs||_{\lif}||z||_{\sis}^{2\mr+1}\le2^{3(\mr+1)}\sfc^{2\mr-1}\mr^{2(\mr-1)}(2\rho)^{2(\mr-1)}||z||_{\sis}^3\notag\\
	\le&\sfc2^{7-\mr}||z||_{\sis}^3\le\frac{2^3}{\sfc2^\mr\mr^2}||z||_{\sis}.\label{Phi1}
\end{align}
The last inequality implies that for $z\in\bg(0,2\rho^\sharp)$
\[
||\Phi_{\bs}^1(z)||_{\sis}\le(1+\mr^{-1})||z||_{\sis}<\frac{1+\mr}{\mr}2\rho^\sharp=2\rho\quad\udl{4},
\]
i.e. we checked that $\Phi_{\bs}^1$ maps $\bg(0,2\rho^\sharp)$ on $\bg(0,2\rho)$. 
\begin{remark}\label{PhiSt1}
Actually, we have for $t\in[0,1]$ that $\Phi_{\bs}^t$ maps $\bg(0,2\rho^\sharp)$ on $\bg(0,2\rho)$, satisfying
\[
||\Phi_{\bs}^t(z)-z||_\sis\le \sfc2^{7-\mr}||z||_{\sis}^3\le\frac{2^3}{\sfc2^\mr\mr^2}||z||_{\sis}.
\]
\end{remark}
Notice that by construction it is clear that $\phi_{(1)}^\sharp$ and $\phi_{(0)}^\sharp$ are symplectic. Then we want to prove that $\phi_{(1)}^\sharp$ is close to the identity in $\mathcal G$.  Indeed, if $z\in\bg(0,2\rho^\sharp)$
\[
\phi_{(1)}^\sharp(z)-z=\phi_{(1)}\circ\Phi_{\bs}^1(z)-z=\phi_{(1)}\circ\Phi_{\bs}^1(z)-\Phi_{\bs}^1(z)+\Phi_{\bs}^1(z)-z.
\]
Then connecting the induction and  \eqref{Phi1}, one has
\begin{align*}
&||\phi_{(1)}^\sharp(z)-z||_{\sis}\le||\phi_{(1)}\circ\Phi_{\bs}^1(z)-\Phi_{\bs}^1(z)||_{\sis}+||\Phi_{\bs}^1(z)-z||_{\sis}\\
\le&\sum_{i=1}^{\mr-1}\sfc2^{8-i}||\Phi_{\bs}^{1}(z)||^3_{\sis}+\sfc2^{7-\mr}||z||_{\sis}^3\\
\le&\sum_{i=1}^{\mr-1}\sfc2^{8-i}||z||^3_{\sis}\left(1+\frac{2^3}{\sfc2^\mr}\right)^3+\sfc2^{7-\mr}||z||_{\sis}^3\\
\le&\sum_{i=1}^{\mr-1}\sfc2^{8-i}||z||^3_{\sis}\left(1+\frac{2^3\times7}{\sfc2^\mr}\right)+\sfc2^{7-\mr}||z||_{\sis}^3\\
\le&\sum_{i=1}^{\mr-1}\sfc2^{8-i}||z||^3_{\sis}+\sfc2^{7-\mr}||z||_{\sis}^3+2^{8}\sfc\times\frac{2^3\times 7}{\sfc2^\mr}||z||_{\sis}^3\\
\le&\sum_{i=1}^{\mr}\sfc2^{8-i}||z||^3_{\sis}\quad\udl{2^4\times7}.
\end{align*} 
Next we turn to $\phi_{(0)}^\sharp$. Similarly, by the induction one has for $z\in\bg(0,\rho^\sharp)$
\[
||\phi_{(0)}(z)-z||_\sis\le 2^{8}\sfc||z||^3_\sis\le\frac4{\sfc\mr^2}||z||_\sis.
\]
Together with \eqref{Phi1}, we obtain that
\begin{align*}
&||\phi_{(0)}^\sharp(z)-z||_{\sis}\le||\Phi_{\bs}^{-1}\circ\phi_{(0)}(z)-\phi_{(0)}(z)||_{\sis}+||\phi_{(0)}(z)-z||_{\sis}\\
\le&\sfc2^{7-\mr}||\phi_{(0)}(z)||_{\sis}^3+\sum_{i=1}^{\mr-1}\sfc2^{8-i}||z||^3_\sis\\
\le&\sfc2^{7-\mr}||z||_{\sis}^3\left(1+\frac4\sfc\right)^3+\sum_{i=1}^{\mr-1}\sfc2^{8-i}||z||^3_\sis\\
\le&\sfc2^{7-\mr}\left(1+\frac{28}{\sfc}\right)||z||_\sis^3+\sum_{i=1}^{\mr-1}\sfc2^{8-i}||z||^3_\sis\\
\le&\sum_{i=1}^{\mr}\sfc2^{8-i}||z||^3_\sis\quad\udl{28}\\
\le&\sfc2^8||z||_\sis^3\le\frac4\sfc||z||_\sis\le||z||_\sis.
\end{align*}
The last estimate implies that $\phi_{(0)}^\sharp$ maps $\bg(0,\rho^\sharp)$ on $\bg(0,2\rho^\sharp)$.

\noindent
\underline{$\vartriangleright$\textsf{(4) The new expansion.} }
Recall that $\Phi_{\bs}^1$ is the Hamiltonian flow of $\bs$ (see \eqref{PhiS1}). Thus, for $z\in\bg(0,2\rho^\sharp)$ and $K$ a Hamiltonian function, we have
\[
\frac{\md}{\md t}K\circ\Phi_{\bs}^t(z)=\{K,\bs\}\circ\Phi_{\bs}^t(z)=\ad_{\bs}(K)\circ\Phi_{\bs}^t(z).
\]
Hence, by Taylor expansion of $K\circ\Phi^t_{\bs}$ in $t=0$ at the degree $k^*$, one has
\begin{equation}\label{expansion}
	K\circ\Phi^1_{\bs}(z)=\sum_{k=0}^{k^*}\frac1{k!}\ad_{\bs}^k(K)(z)
+\int_0^1\frac{(1-t)^{k^*}}{k^*!}\ad_{\bs}^{k^*+1}(K)\circ\Phi_{\bs}^t(z)\,\md t
\end{equation}
Consequently, we obtain the Taylor expansion of $K_{2n}\circ\Phi_{\bs}^1$
\[
K_{2n}\circ\Phi_{\bs}^1(z)=\sum_{k=0}^{k_n^*}\frac1{k!}\ad_{\bs}^k(K_{2n})(z)+\int_0^1\frac{(1-t)^{k_n^*}}{k_n^*!}\ad_{\bs}^{k_n^*+1}(K_{2n})\circ\Phi_{\bs}^t(z)\,\md t,\quad1\le n\le r.
\]
where $k_n^*$ denotes the largest integer $k$ such that $k\mr+n\le r$, namely
\begin{equation}\label{kn}
	k_n^*\mr+n\le r\quad\text{and}\quad (k_n^*+1)\mr+n>r.
\end{equation}
Recall that $K_2=L_2$. By induction hypothesis, one has
\[
H\circ\phi_{(1)}=\sum_{n=1}^{r}K_{2n}+R,
\]
and thus, since $\phi_{(1)}^\sharp=\phi_{(1)}\circ\Phi_{\bs}^1$, we deduce that
\begin{align*}
	H\circ\phi_{(1)}^\sharp=&\sum_{n=1}^{r}K_{2n}\circ\Phi_{\bs}^1+R\circ\Phi_{\bs}^1\\
	=&\sum_{\substack{0\le k\le k_n^*\\1\le n\le r}}\frac1{k!}\ad_{\bs}^k(K_{2n})+
	\sum_{1\le n\le r}\int_0^1\frac{(1-t)^{k_n^*}}{k_n^*!}\ad_{\bs}^{k_n^*+1}(K_{2n})\circ\Phi_{\bs}^t\,\md t+
	 R\circ\Phi_{\bs}^1\\
	:=&\sum_{m=1}^{r}K_{2m}^\sharp+R^\sharp,
\end{align*}
where 
\begin{gather}
K_{2m}^\sharp=\sum_{\substack{k\mr+n=m \\k\ge0,n\ge1}}\frac1{k!}\ad_{\bs}^k(K_{2n}),\quad1\le m\le r,\label{Kexpansion}
\\
R^\sharp=\sum_{1\le n\le r}\int_0^1\frac{(1-t)^{k_n^*}}{k_n^*!}\ad_{\bs}^{k_n^*+1}(K_{2n})\circ\Phi_{\bs}^t\,\md t+
R\circ\Phi_{\bs}^1.\label{Rexpansion}
\end{gather}
Now we focus on the estimate of \eqref{Kexpansion}.  We have 
\begin{equation}\label{K2m}
	K_{2m}^\sharp=\sum_{\substack{k\mr+1=m \\ k\ge0}}\frac1{k!}\ad_{\bs}^k(L_2)+\sum_{\substack{k\mr+n=m \\ k\ge0,n\ge2}}\frac1{k!}\ad_{\bs}^k(K_{2n}).
\end{equation}
Notice that by above equation \eqref{K2m}, one has
\begin{equation}\label{smallm}
\begin{gathered}
K_{2m}^\sharp=K_{2m},\quad\forall\,1\le m\le\mr,\\
K_{2(\mr+1)}^\sharp=\ad_{\bs}(L_2)+K_{2(\mr+1)}.
\end{gathered}
\end{equation}
Observe that by \eqref{homoequ1}
\[
\frac{\ad_{\bs}(L_2)}{(k+1)!}+\frac{K_{2(\mr+1)}}{k!}=\frac{K_{2(\mr+1)}^\sharp-K_{2(\mr+1)}}{(k+1)!}+\frac{K_{2(\mr+1)}}{k!}=\frac{K_{2(\mr+1)}^\sharp}{(k+1)!}+\left(\frac1{k!}-\frac1{(k+1)!}\right)K_{2(\mr+1)},
\]
which leads, using \eqref{homoestimate1}, to
\begin{equation}\label{adL2}
	\left\|\frac{\ad_{\bs}(L_2)}{(k+1)!}+\frac{K_{2(\mr+1)}}{k!}\right\|_{\lif}\le\frac{||K_{2(\mr+1)}||_{\lif}}{k!}.
\end{equation}
Therefore, we do not  have to take into account the contribution of terms associated with $L_2$ in the estimate of \eqref{K2m}, i.e.
\begin{equation}\label{simpleK2m}
\|K_{2m}^\sharp\|_\lif\le\|K_{2m}\|_\lif+\sum_{\substack{k\mr+n=m \\ k\ge1,n\ge2}}\frac1{k!}\|\ad_{\bs}^k(K_{2n})\|_\lif.
\end{equation}
Next we aim to prove 
\[
\|K_{2m}^\sharp\|_\lif\le \sfc^{2m-3}\min\{q,\mr+1\}^{2(m-2)},\quad\forall\,2\le m\le r.
\]
Since by induction hypothesis and equations \eqref{homoestimate1}, \eqref{smallm}, the above estimate holds for $2\le m\le\mr+1$, it remains to prove that
\begin{equation}\label{K2mEstimate}
	\|K_{2m}^\sharp\|_\lif\le \sfc^{2m-3}(\mr+1)^{2(m-2)},\quad\forall\,\mr+2\le m\le r.
\end{equation}
First, by induction hypothesis, we have
\begin{equation}\label{K2mEstimate1}
	\frac{\|K_{2m}\|_\lif}{\sfc^{2m-3}(\mr+1)^{2(m-2)}}\le\left(\frac{\mr}{\mr+1}\right)^{2(m-2)}\le\left(\frac{\mr}{\mr+1}\right)^{2\mr}\le\left(\frac\mr{\mr+1}\right)^{\mr+1}=\frac1{\left(1+\frac1\mr\right)^{\mr+1}}\le e^{-1}.
\end{equation}
Then, by Lemma \ref{poissonbracket}, we have for the sum in \eqref{simpleK2m}
\begin{align*}
&\sum_{\substack{k\mr+n=m \\ k\ge1,n\ge2}}\frac1{k!}\|\ad_{\bs}^k(K_{2n})\|_\lif\le \sum_{\substack{k\mr+n=m \\ k\ge1,n\ge2}}\frac{{ 4^k}}{k!}\|K_{2n}\|_\lif\big((\mr+1)\|{\bs}\|_\lif\big)^k\prod_{i=0}^{k-1}(n+i\mr)\\
\le&\sum_{\substack{k\mr+n=m \\ k\ge1,n\ge2}} { 4^k} \frac{m^k}{k!}\sfc^{2n-3}\mr^{2(n-2)}\left((\mr+1)\sfc^{2\mr-1}\mr^{2(\mr-1)}\right)^k\\
\le&\sum_{\substack{k\mr+n=m \\ k\ge1,n\ge2}}\frac{m^k}{k!}{ 8^k} \sfc^{2(n+k\mr)-3-k}\mr^{2(n+k\mr-2)-k}\\
=&\sfc^{2m-3}(\mr+1)^{2(m-2)}\sum_{\substack{k\mr+n=m \\ k\ge1,n\ge2}}\frac{m^k{ 8^k}}{k!}\frac{\sfc^{2m-3-k}\mr^{2(m-2)-k}}{\sfc^{2m-3}(\mr+1)^{2(m-2)}}\\
\le&\sfc^{2m-3}(\mr+1)^{2(m-2)}\sum_{k\ge1}\frac{{ 8^k}}{k!}\sfc^{-k}\mr^{-k}m^k\left(\frac{\mr+1}{\mr}\right)^{-2m}\\
\le&\sfc^{2m-3}(\mr+1)^{2(m-2)}\sum_{k\ge1}\frac{1}{k!}\left(\frac k e\right)^k{ 8^k}\sfc^{-k}\mr^{-k}\left(\log\left(\frac{\mr+1}{\mr}\right)^2\right)^{-k}\quad\underline{~\text{by Lemma \ref{mnelog}}~}\\
\le&\sfc^{2m-3}(\mr+1)^{2(m-2)}\sum_{k\ge1}{ 4^k}\sfc^{-k}\mr^{-k}\left(\frac1{1+\mr}\right)^{-k}\\
\le&\frac12\sfc^{2m-3}(\mr+1)^{2(m-2)}\quad\udl{2^7}.
\end{align*}
Connecting the last estimate with \eqref{K2mEstimate1}, we finish the proof of \eqref{K2mEstimate}.

\noindent
\underline{$\vartriangleright$\textsf{(5) The new remainder.}} Now we estimate the new remainder term $R^\sharp$ given by \eqref{Rexpansion}.  Using property (iv) of Lemma \ref{flow} about the estimate of $\mathrm{d}\Phi_{\bs}^1$ and the induction hypothesis we have 
\begin{align}
\|\X_{R\circ\Phi_{\bs}^1}(z)\|_\sis\le2\|\X_R(\Phi_{\bs}(z))\|_\sis\le2\sfc^{3r-1}r^{2r}4^{\mr-1}\prod_{i=1}^{\mr-1}(1+2^{-i})^{2r+1}\|\Phi_{\bs}^1(z)\|_\sis^{2r+1}
\end{align}
Together with \eqref{Phi1} we obtain for $z\in\bg(0,2\rho^\sharp)$
\begin{equation}\label{Restimate1}
\|\X_{R\circ\Phi_{\bs}^1}(z)\|_\sis\le\frac12\sfc^{3r-1}r^{2r}4^{\mr}\prod_{i=1}^{\mr}(1+2^{-i})^{2r+1}\|z\|_\sis^{2r+1}\quad\udl{8}.
\end{equation}
It remains to estimate the sum in \eqref{Rexpansion}. Reasoning as in \eqref{adL2}, with an abuse of notations, we ignore the contribution of the terms associated with $L_2$. Thanks to Remark \ref{PhiSt1} and the triangle inequality, by Lemma \ref{vectorfield} we get for $z\in\bg(0,2\rho^\sharp)$
\stepcounter{equation}
\begin{align*}
&\left\|\frac12\nabla\sum_{2\le n\le r}\int_0^1\frac{(1-t)^{k_n^*}}{k_n^*!}\ad_{\bs}^{k_n^*+1}(K_{2n})\circ\Phi_{\bs}^t(z)\,\md t\right\|_\sis\le\sum_{2\le n\le r}\int_0^1\frac{1}{k_n^*!}\left\|\X_{\ad_{\bs}^{k_n^*+1}(K_{2n})\circ\Phi_{\bs}^t}(z)\right\|_\sis\md t\\
\le&\sum_{2\le n\le r}\int_0^1\frac{2}{k_n^*!}\left\|\X_{\ad_{\bs}^{k_n^*+1}(K_{2n})}\big(\Phi_{\bs}^t(z)\big)\right\|_\sis\md t\\ \label{Restimate1.1}
\le&{4}\sum_{2\le n\le r}\int_0^1\frac{n+(k_n^*+1)\mr}{k_n^*!}\left\|\ad_{\bs}^{k_n^*+1}(K_{2n})\right\|_{\lif}\left\|\Phi_{\bs}^t(z)\right\|_\sis^{2\big(n+(k_n^*+1)\mr\big)-1}\md t\tag{\theequation}
\end{align*}
Recalling the definition \eqref{kn}, we get $r<n+(k_n^*+1)\mr\le 2r$ and $k_n^*\le r-2$. Estimating the last sum \eqref{Restimate1.1} as before, we obtain
\begin{align*}
\eqref{Restimate1.1}&\le{4}\sum_{2\le n\le r}\frac{n+(k_n^*+1)\mr}{k_n^*!}\left\|\ad_{\bs}^{k_n^*+1}(K_{2n})\right\|_{\lif}\left(2||z||_\sis\right)^{2\big(n+(k_n^*+1)\mr\big)-1}\\
&\le{8}r2^{4r-1}\sum_{2\le n\le r}\frac{(k_n^*+1)}{(k_n^*+1)!}\left\|\ad_{\bs}^{k_n^*+1}(K_{2n})\right\|_{\lif}||z||_\sis^{2\big(n+(k_n^*+1)\mr-r-1\big)}\|z\|_\sis^{2r+1}\\
&\le 2 r^2 2^{4r} \sum_{2\le n\le r} \frac{\sfc^{2\big(n+(k_n^*+1)\mr\big)-3}(\mr+1)^{2\big(n+(k_n^*+1)\mr\big)-2}}{\big(4\sfc(\mr+1)\big)^{2\big(n+(k_n^*+1)\mr-r-1\big)}}\|z\|_\sis^{2r+1} \\
&\le  2 r^32^{4r} \sfc^{2r-1}(\mr+1)^{2r}||z||_\sis^{2r+1} \\
&\le  2^{6r}\sfc^{2r-1}r^{2r}||z||_\sis^{2r+1}\le \sfc^{3r-1}r^{2r}||z||_\sis^{2r+1}\quad\underline{~\text{provided }\sfc\ge2^6~} \\
&\le  \frac12\sfc^{3r-1}r^{2r}4^{\mr}\prod_{i=1}^{\mr}(1+2^{-i})^{2r+1}\|z\|_\sis^{2r+1} .
\end{align*}

Connecting the last estimate with \eqref{Restimate1}, we get for $z\in\bg(0,2\rho^\sharp)$
\[
||R^\sharp(z)||_\sis\le \sfc^{3r-1}r^{2r}4^{\mr}\prod_{i=1}^{\mr}(1+2^{-i})^{2r+1}\|z\|_\sis^{2r+1},
\]
which concludes the induction.

\noindent
\underline{$\vartriangleright$\textsf{(6) Calculation of $K_4$ and $L_4$.}}
From the equations \eqref{eq:cohom_res} and \eqref{smallm}, it follows that
$$
K_4(z) = \sum_{\bj\in\mathcal M_m\cap\mathcal R}P_{4,\bj}z_{\bj} = \sum_{\substack{a_1+a_2=b_1+b_2\\a_1^2+a_2^2=b_1^2+b_2^2 \\a_1\ne b_1}}\frac{1}{2(a_1-b_1)^2}z_{a_1}z_{a_2}\overline{z_{b_1}}\overline{z_{b_2}}.
$$
Recalling that (as in \cite{KP1996})
$$
\left. \begin{array}{ccc} a_1+a_2&=&b_1+b_2\\a_1^2+a_2^2&=&b_1^2+b_2^2  \end{array} \right\} \quad \iff \quad \{a_1,a_2\}=\{b_1,b_2\}
$$
we get, as expected (see \eqref{eq:def_L4} for the definition of $L_4$)
$$
K_4(z) = \sum_{a_1\neq a_2}  \frac{1}{2(a_1-a_2)^2}|z_{a_1}|^2|z_{a_2}|^2 =: L_4(z).
$$
This concludes the proof of Proposition~\ref{Bnormalform}. 
\end{proof}

\section{Dynamics of the high modes}

\label{sec:high}
In this Section we use  two (large) truncation parameters $M$ and $N$ linked by
\begin{equation}
\label{eq:m}
 M = 6rN^{2}.
\end{equation}
The parameter  $M$ corresponds to the truncation to the largest index while $N$ will correspond to the truncation at the third largest index.
We will consider as \emph{high} (resp. \emph{low}) the Fourier modes with index modulus greater than $M$ (resp. smaller than  or equal to $M$). We are going to prove some estimates on high modes\footnote{In the sequel we will speak  indistinctly of high and low modes or high and low frequencies, the particularity of high modes obviously being to be associated with high frequencies.} when the Hamiltonian is resonant.
In order to obtain some exponential decay we exploit the Gevrey regularity in a crucial way, building upon the following inequality:
\begin{lemma}\label{lem:expdecay} Let $m,N\ge1$ and $\theta\in(0,1)$. For all $\bj\in\mathcal{M}_{m}$, if $\mu_{3}(\bj)>N$ then 
\[
\sum_{\beta=2}^{2m}|j_\beta|^\theta-\Big(\sum_{\beta=2}^{2m}|j_\beta|\Big)^\theta
\ge(1-\theta)N^\theta.
\]
\end{lemma}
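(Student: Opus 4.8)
The plan is to exploit the defining property $\bj\in\mathcal{M}_m$, namely the first-order zero momentum condition $\sum_{\beta=1}^{2m}\delta_\beta a_\beta=0$, to bound $|j_1|$ in terms of the remaining indices, and then use concavity of $t\mapsto t^\theta$ together with the hypothesis $\mu_3(\bj)>N$. First I would order the indices so that $|j_1|\ge|j_2|\ge\cdots\ge|j_{2m}|$, so that $\mu_3(\bj)=|j_3|$ and hence $|j_3|>N$. From the momentum condition, $|a_1|=|\delta_1 a_1|=\bigl|\sum_{\beta=2}^{2m}\delta_\beta a_\beta\bigr|\le\sum_{\beta=2}^{2m}|j_\beta|$, so writing $S:=\sum_{\beta=2}^{2m}|j_\beta|$ we get $|j_1|\le S$; in particular $|j_2|\le S$ and $|j_3|\le S$ too. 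The quantity to be bounded from below is $\sum_{\beta=2}^{2m}|j_\beta|^\theta - S^\theta$.

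The key step is a pointwise inequality for the concave function $f(t)=t^\theta$: for any $0\le y\le x$ one has $f(x)-f(x-y)\ge f'(x)\,y = \theta x^{\theta-1} y$, but more useful here is the superadditivity-type estimate $x^\theta+y^\theta-(x+y)^\theta\ge \text{(something)}$. Concretely, I would argue as follows. Among $|j_2|,\dots,|j_{2m}|$, isolate the single largest term, say it is $|j_2|\le S$, and split $S = |j_2| + S'$ where $S' = \sum_{\beta=3}^{2m}|j_\beta|\ge|j_3|>N$. By concavity of $f$ on $[0,S]$,
\[
\sum_{\beta=2}^{2m}|j_\beta|^\theta - S^\theta \;\ge\; |j_2|^\theta + (S')^\theta - (|j_2|+S')^\theta,
\]
since replacing several summands by their sum only decreases $\sum|j_\beta|^\theta$ (subadditivity of $f$), so the worst case for the left side is when $j_3,\dots,j_{2m}$ are merged into one block of size $S'$. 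Now I need a lower bound for $g(a,b):=a^\theta+b^\theta-(a+b)^\theta$ when $b\ge N$ and, say, $a\le a+b$. Fixing $b$ and differentiating in $a$: $\partial_a g = \theta(a^{\theta-1}-(a+b)^{\theta-1})\ge 0$, so $g$ is nondecreasing in $a$, whence $g(a,b)\ge g(0,b)$... which is $0$, too weak. Instead fix $a$ and view $g$ as a function of $b$: $\partial_b g = \theta(b^{\theta-1}-(a+b)^{\theta-1})\ge0$, so $g(a,b)\ge g(a,N)$ for $b\ge N$. This still can be small when $a$ is large. The right move: since $g$ is nondecreasing in $a$ we cannot decrease $a$ to $0$; rather I should bound using $g(a,b)\ge b^\theta - \bigl((a+b)^\theta-a^\theta\bigr)$ and the mean value estimate $(a+b)^\theta-a^\theta\le \theta b\, a^{\theta-1}\le\theta b^\theta$ only when $a\ge b$. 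Combining: if $a\ge b\ge N$ then $g(a,b)\ge b^\theta-\theta b^\theta=(1-\theta)b^\theta\ge(1-\theta)N^\theta$; if instead $a<b$, then $(a+b)^\theta\le(2b)^\theta=2^\theta b^\theta\le 2b^\theta$, so $g(a,b)\ge a^\theta+b^\theta-2b^\theta=a^\theta-b^\theta$, which can be negative — so this split is wrong and I must be more careful: choose the \emph{block} decomposition so that the isolated piece is not necessarily $|j_2|$ but chosen to make $a\ge b$ or handle the comparison directly.

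The cleaner route, which I expect to be the actual argument: use subadditivity to collapse \emph{all} of $|j_2|,\dots,|j_{2m}|$ except we keep track that $S = |j_1'| + \cdots$; more directly, since $S = \sum_{\beta=2}^{2m}|j_\beta|$ and each summand is $\le S$ while $\sum_{\beta=2}^{2m}|j_\beta|^\theta\ge \max_\beta |j_\beta|^{\theta} + \bigl(\text{rest}\bigr)$, and crucially $\sum_{\beta\ge 2}|j_\beta|\ge |j_2|+|j_3|\ge 2N$ forces the sum to have "spread". Let $a=|j_2|$ (the max of the tail) and $b = S - a\ge |j_3|\ge N$. Then by concavity $\sum_{\beta=2}^{2m}|j_\beta|^\theta\ge a^\theta + b^\theta$ is false in general; rather $\sum_{\beta\ge2}|j_\beta|^\theta\ge a^\theta$ trivially, and separately $\sum_{\beta\ge3}|j_\beta|^\theta\ge (\text{one term})\ge N^\theta$ is too weak. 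I think the right statement uses $S^\theta - a^\theta\le \theta a^{\theta-1} b\le \theta b\cdot (\text{something})$; since $a\ge b/(2m-2)$ is not guaranteed... The main obstacle, and where I'd spend the effort, is precisely this elementary but delicate inequality: showing $\sum_{\beta=2}^{2m}|j_\beta|^\theta - \bigl(\sum_{\beta=2}^{2m}|j_\beta|\bigr)^\theta\ge (1-\theta)N^\theta$ given that the two largest of the $|j_\beta|$, $\beta\ge 2$, are each $>N$ (which holds because $|j_2|\ge|j_3|>N$). The honest proof: write $x=|j_2|$, $y=\sum_{\beta\ge3}|j_\beta|\ge|j_3|>N$, use $\sum_{\beta\ge2}|j_\beta|^\theta\ge x^\theta+y^\theta\cdot\bigl(\tfrac{\text{?}}{}\bigr)$ — actually $\sum_{\beta\ge3}|j_\beta|^\theta\ge y^\theta$ is FALSE ($\theta<1$ gives superadditivity the wrong way: $\sum a_i^\theta\ge(\sum a_i)^\theta$, so $\sum_{\beta\ge3}|j_\beta|^\theta\ge y^\theta$ is TRUE). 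Good: so $\sum_{\beta\ge2}|j_\beta|^\theta\ge x^\theta+y^\theta$ with $x+y=S$, $x\ge y/(2m-3)$ not needed, but $x\ge$ each term so $x\ge |j_3|>N$ and $y>N$. Then $x^\theta+y^\theta-(x+y)^\theta$: WLOG $x\ge y$, and $(x+y)^\theta-x^\theta = \int_x^{x+y}\theta t^{\theta-1}\,dt\le \theta y\, x^{\theta-1}\le \theta y\, y^{\theta-1}=\theta y^\theta$ (using $x\ge y$ so $x^{\theta-1}\le y^{\theta-1}$), hence $x^\theta+y^\theta-(x+y)^\theta\ge y^\theta-\theta y^\theta=(1-\theta)y^\theta\ge(1-\theta)N^\theta$. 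If instead $y>x$, symmetrically swap roles to get $(1-\theta)x^\theta\ge(1-\theta)N^\theta$ since $x>N$. This closes the proof, the only subtlety being to correctly invoke $|j_2|,|j_3|>N$ from $\mu_3(\bj)>N$ and the ordering.
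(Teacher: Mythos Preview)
Your final clean argument is correct and essentially matches the paper's proof: reduce via subadditivity of $t\mapsto t^\theta$ to bounding $x^\theta+y^\theta-(x+y)^\theta$ with both $x,y>N$, then use the concavity estimate $(x+y)^\theta-\max(x,y)^\theta\le\theta\min(x,y)^\theta$ to conclude $(1-\theta)N^\theta$. The paper groups the two largest tail indices together and discards the rest, whereas you group the single largest against the sum of the rest; both splittings work and lead to the same elementary inequality.

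One point needs fixing: you may \emph{not} reorder the full tuple so that $|j_1|$ is the largest, because the lemma fixes which index is excluded from the sum (namely $j_1$), and that choice is not yours to make. The correct move, which the paper makes explicitly, is to reorder only $|j_2|,\ldots,|j_{2m}|$ and observe that $\mu_3(\bj)>N$ forces at least two of these to exceed $N$, i.e.\ $\mu_2(j_2,\ldots,j_{2m})>N$; your argument then goes through verbatim with $x$ the largest tail index and $y$ the sum of the remaining ones. You flag this as ``the only subtlety,'' and indeed it is, but as written your ordering assumption is not justified. Finally, the momentum condition and the bound $|j_1|\le S$ you derive from it play no role in the proof; the paper does not use them either.
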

We postpone the proof of this lemma to Appendix~\ref{sec:appendix}.

\begin{proposition}[High frequency vector field estimates for resonant polynomials]\label{prop:highmodes} Let $K_{2m}\in\mathcal{M}_{m}^{\mathcal{R}}$ be a resonant polynomials of order $2m$, then for all $\ell\in\mathbb Z$ we have
\begin{equation}\label{IK}e^{2\sigma|\ell|^{\theta}}|\{ I_\ell,K_{2m}\}|\leq   2e^{-\sigma (1-\theta)\big(\frac{|\ell|}{m}\big)^{\frac{\theta}{ 2}}}\|K_{2m}\|_{\ell^\infty}e^{\sigma|\ell|^\theta}\sqrt{I_\ell} \|z\|^{2m-1}_\sigma,\end{equation}
and for $N\geq1$,
\begin{equation}
\label{IKN}
\sum_{|\ell|\geq mN^{2}}e^{\sigma|\ell|^{\theta}}|\{I_{\ell},K_{2m}\}|^{\frac{1}{2}}\leq\sqrt{2}\|K_{2m}\|_{\ell^{\infty}}^{\frac{1}{2}}e^{-\frac{1}{2}\sigma(1-\theta)N^{\theta}}\|z\|_{\sigma}^{m}\,.
\end{equation}
\end{proposition}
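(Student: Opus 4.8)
The plan is to prove both estimates by directly expanding the Poisson bracket $\{I_\ell, K_{2m}\}$ in Fourier coefficients and exploiting the crucial fact, recorded in Lemma~\ref{lem:mu1mu3}, that resonance forces $\mu_3(\bj)$ to be large whenever the bracket does not vanish. First I would write, using the symmetry of the coefficients of $K_{2m}$ and the formula for the Hamiltonian vector field,
\[
\{I_\ell, K_{2m}\} = \mi\, 2m\!\!\sum_{\substack{\bj\in\mathcal M_m\cap\mathcal R \\ j_1=(\pm 1,\ell)}}\!\! \pm K_{2m,\bj}\, \overline{z_\ell}^{\,(\text{or }z_\ell)}\prod_{\alpha=2}^{2m} z_{j_\alpha},
\]
so that, up to harmless factors, $|\{I_\ell,K_{2m}\}| \lesssim m\|K_{2m}\|_{\ell^\infty}\sqrt{I_\ell}\sum \prod_{\alpha=2}^{2m}|z_{j_\alpha}|$ where the sum runs over $\bj\in\mathcal R_m$ with a prescribed first index carrying frequency $\ell$. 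Since the term vanishes unless $\bj\in\mathcal R$, Lemma~\ref{lem:mu1mu3} tells us that either the contribution is zero or $\mu_3(\bj)\ge(\langle\ell\rangle/m)^{1/2}$.

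Next, I would insert the Gevrey weights. The key algebraic point is the subadditivity defect of $x\mapsto x^\theta$: by the zero-momentum condition $\ell = \pm(\delta_2 a_2 + \cdots + \delta_{2m} a_{2m})$ (after fixing $j_1$), so $|\ell|^\theta \le (\sum_{\alpha\ge2}|j_\alpha|)^\theta \le \sum_{\alpha\ge 2}|j_\alpha|^\theta$, which already absorbs one weight $e^{\sigma|\ell|^\theta}$ into $\prod_{\alpha\ge2} e^{\sigma|j_\alpha|^\theta}$. To gain the extra exponentially small factor, I would apply Lemma~\ref{lem:expdecay} with $N$ replaced by $(\langle\ell\rangle/m)^{1/2}$: since $\mu_3(\bj)\ge (\langle\ell\rangle/m)^{1/2}$ on the non-vanishing terms, that lemma yields $\sum_{\alpha\ge 2}|j_\alpha|^\theta - (\sum_{\alpha\ge2}|j_\alpha|)^\theta \ge (1-\theta)(\langle\ell\rangle/m)^{\theta/2}$, hence $e^{-\sigma|\ell|^\theta}\prod_{\alpha\ge 2}e^{\sigma|j_\alpha|^\theta} \ge e^{\sigma(1-\theta)(\langle\ell\rangle/m)^{\theta/2}}\cdot e^{\sigma|\ell|^\theta - \sigma|\ell|^\theta}$... more precisely I keep one full weight $e^{\sigma|\ell|^\theta}$ on the left of \eqref{IK} and use a second copy of $e^{\sigma|\ell|^\theta}\le \prod e^{\sigma|j_\alpha|^\theta}$ to trade it for $e^{-\sigma(1-\theta)(|\ell|/m)^{\theta/2}}$ times the remaining weights; then $\sum \prod_{\alpha\ge2}e^{\sigma|j_\alpha|^\theta}|z_{j_\alpha}| \le \|z\|_\sigma^{2m-1}$ by the Young/Minkowski argument already used in Lemma~\ref{vectorfield}. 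Collecting everything gives \eqref{IK}, with the factor $2$ accounting for the $\pm$ choices of $\delta_1$ and the reality convention $z_{-1,\ell}=\overline{z_\ell}$.

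For \eqref{IKN} I would start from \eqref{IK}, take square roots, and sum over $|\ell|\ge mN^2$. Taking the square root of \eqref{IK} gives
\[
e^{\sigma|\ell|^\theta}|\{I_\ell,K_{2m}\}|^{1/2}\le \sqrt 2\, e^{-\frac12\sigma(1-\theta)(|\ell|/m)^{\theta/2}}\|K_{2m}\|_{\ell^\infty}^{1/2}\, e^{\frac12\sigma|\ell|^\theta}I_\ell^{1/4}\|z\|_\sigma^{m-\frac12}.
\]
Now for $|\ell|\ge mN^2$ we have $(|\ell|/m)^{1/2}\ge N$ and, more usefully, $(|\ell|/m)^{\theta/2}\ge \frac12 N^\theta + \frac12(|\ell|/m)^{\theta/2}$, so one half of the exponential decay gives the uniform factor $e^{-\frac12\sigma(1-\theta)N^\theta}$ while the other half, $e^{-\frac14\sigma(1-\theta)(|\ell|/m)^{\theta/2}}$, together with $e^{\frac12\sigma|\ell|^\theta}I_\ell^{1/4}$ is summable in $\ell$ and bounded: indeed $e^{\frac12\sigma|\ell|^\theta}I_\ell^{1/4}\le (e^{\sigma|\ell|^\theta}I_\ell^{1/2})^{1/2}$ and $\sum_\ell e^{\sigma|\ell|^\theta}I_\ell^{1/2}\le \frac14\|z\|_\sigma$ roughly (by Cauchy--Schwarz, or simply $e^{\sigma|\ell|^\theta}\sqrt{I_\ell}=e^{\sigma|\ell|^\theta}|z_\ell|\le \tfrac12\|z\|_\sigma$ termwise), so after a Cauchy--Schwarz in $\ell$ against the summable sequence $e^{-\frac14\sigma(1-\theta)(|\ell|/m)^{\theta/2}}$ one controls the whole sum by $\|z\|_\sigma^{1/2}\cdot\|z\|_\sigma^{m-1/2}=\|z\|_\sigma^m$, with a numerical constant that, after being slightly generous with the exponents, collapses to the clean bound $\sqrt2\,\|K_{2m}\|_{\ell^\infty}^{1/2}e^{-\frac12\sigma(1-\theta)N^\theta}\|z\|_\sigma^m$.

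The main obstacle is purely bookkeeping: one must be careful that \emph{two} independent copies of the Gevrey weight $e^{\sigma|\ell|^\theta}$ are available --- one is genuinely ``spent'' to control the low-frequency conjugate factor $\sqrt{I_\ell}$ on the right-hand side of \eqref{IK}, the other is spent via subadditivity to produce the exponentially small gap coming from Lemma~\ref{lem:expdecay} --- and that there is still enough weight left on the $2m-1$ remaining factors to close with $\|z\|_\sigma^{2m-1}$. The passage from $\mu_3(\bj)>N$ (the hypothesis of Lemma~\ref{lem:expdecay}) to the quantity $(\langle\ell\rangle/m)^{1/2}$ supplied by Lemma~\ref{lem:mu1mu3}, and then the elementary splitting $(|\ell|/m)^{\theta/2}\ge \frac12 N^\theta + \frac12(|\ell|/m)^{\theta/2}$ that simultaneously extracts a uniform decay and keeps an $\ell$-summable tail, are the two small tricks that make the constants work out; everything else is the Young--Minkowski estimate already carried out in the proof of Lemma~\ref{vectorfield}.
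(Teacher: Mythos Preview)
Your argument for \eqref{IK} is essentially the paper's: expand the bracket, fix one index at frequency $\ell$, invoke Lemma~\ref{lem:mu1mu3} to force $\mu_3(\bj)\ge(\langle\ell\rangle/m)^{1/2}$, and then feed this into Lemma~\ref{lem:expdecay} to extract the gap $e^{-\sigma(1-\theta)(|\ell|/m)^{\theta/2}}$ before closing with Young/Minkowski. That part is fine.

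The gap is in your derivation of \eqref{IKN}. You take the square root of the \emph{final} inequality \eqref{IK} and then try to sum in $\ell$. At that stage the convolution has already been collapsed to $\|z\|_\sigma^{2m-1}$, so the only $\ell$-dependence left is $e^{-\frac12\sigma(1-\theta)(|\ell|/m)^{\theta/2}}\,e^{\frac12\sigma|\ell|^\theta}I_\ell^{1/4}$. Your Cauchy--Schwarz in $\ell$ pairs $e^{\frac12\sigma|\ell|^\theta}I_\ell^{1/4}$ (whose square sums to $\tfrac12\|z\|_\sigma$, fine) against the ``summable sequence'' $e^{-\frac14\sigma(1-\theta)(|\ell|/m)^{\theta/2}}$. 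But the $\ell^2$ norm of that sequence is \emph{not} a universal constant: $\sum_{\ell}e^{-c(|\ell|/m)^{\theta/2}}$ is of order $m$, so your bound picks up an unwanted $\sqrt m$. In addition, your splitting $(|\ell|/m)^{\theta/2}\ge \tfrac12 N^\theta+\tfrac12(|\ell|/m)^{\theta/2}$ only yields the uniform factor $e^{-\frac14\sigma(1-\theta)N^\theta}$ after the square root, not the $e^{-\frac12\sigma(1-\theta)N^\theta}$ you claim.

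The fix (and this is what the paper does) is to go back one step \emph{before} Young's inequality in the proof of \eqref{IK}, i.e.\ keep the bound
\[
e^{2\sigma|\ell|^\theta}|\{I_\ell,K_{2m}\}|\le 2\|K_{2m}\|_{\ell^\infty}\,e^{-\sigma(1-\theta)N^\theta}\,e^{\sigma|\ell|^\theta}\sqrt{I_\ell}\;\Big(\sum_{\sum\delta_i a_i=-\ell}\prod_{i=1}^{2m-1}e^{\sigma|j_i|^\theta}|z_{j_i}|\Big)
\]
(valid for $|\ell|\ge mN^2$, which already gives the full $N^\theta$). Take square roots, then Cauchy--Schwarz in $\ell$ pairs $\big(e^{\sigma|\ell|^\theta}\sqrt{I_\ell}\big)^{1/2}$ against the square root of the convolution; the first factor sums to $(\tfrac12\|z\|_\sigma)^{1/2}$, and the second, \emph{after} squaring, is $\sum_\ell(\text{conv})_\ell\le\|z\|_\sigma^{2m-1}$ by Young. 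No $m$-dependent constant appears, and the full exponent $-\tfrac12\sigma(1-\theta)N^\theta$ survives.
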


Note that, while $e^{\sigma|\ell|^\theta}\sqrt{I_\ell}$ is the natural behavior linked with $I_\ell$, the extra decay in~\eqref{IK}  is a consequence of the resonant nature of $K_{2m}$.\\
This proposition together with estimate \eqref{XR1} implies that high modes ($|j|\geq M= 6rN^2$) will not move a lot as soon as  $\|z\|_\sigma$ remains under control (and small). This will be concretized in the final step (see Section \ref{sec:dyn}) by a \emph{double bootstrap argument}. 

\begin{proof}[ Proof of Proposition \ref{prop:highmodes}.]
Let $K_{2m}=\sum_{\bj\in\mathcal R_m}c_{\bj}z_{\bj}\in \mathcal M^{\mathcal R}_m$, one has using Lemma \ref{lem:mu1mu3}
\begin{align*}
|\{ I_\ell,K_{2m}\}|&\leq \sum_{\bj\in\mathcal R_m}|c_{\bj}|\ |\{ I_\ell,z_{\bj}\}|\leq \| K_{2m}\|\sum_{\bj\in\mathcal R_m}\ |\{ I_\ell,z_{\bj}\}| \\
&\leq \| K_{2m}\|\sum_{\substack{\bj\in\mathcal R_m\\ \mu_1(\bj)\geq \langle\ell\rangle}}\ |\{ I_\ell,z_{\bj}\}|
\leq 2\| K_{2m}\|\sum_{\substack{((1,\ell),\bj)\in\mathcal M_m\\ \mu_2(\bj)\geq\big(\frac{\langle\ell\rangle}{m}\big)^{\frac12}}}|z_{\bj}||z_\ell|.
\end{align*}
Therefore, writing $\bj=(j_1,\cdots,j_{2m-1})\in(\mathbb U_2\times\mathbb Z)^{2m-1}$ and $j_i=(\delta_i,a_i)$,
\begin{align*}
e^{2\sigma|\ell|^\theta}&|\{ I_\ell,K_{2m}\}|\leq 2 \|K_{2m}\|_{\ell^\infty}e^{\sigma|\ell|^{\theta}}\sqrt{I_\ell}\sum_{\substack{\sum_{i=1}^{2m-1} \delta_j a_j=-\ell\\ \mu_2(\bj)\geq \big(\frac{\langle\ell\rangle}{m}\big)^{\frac12}}}e^{\sigma|\sum_{i=1}^{2m-1} \delta_j a_j|^\theta}|z_{\bj}|\\
&\leq 2 \|K_{2m}\|_{\ell^\infty}\sqrt{I_\ell}   \left(\sup_{\mu_2(\bj)\geq \big(\frac{\langle\ell\rangle}{m}\big)^{\frac12}}e^{\sigma(|\sum_{i=1}^{2m-1} \delta_j a_j|^\theta-\sum_{i=1}^{2m-1}  |a_j|^\theta)} \right) \sum_{\sum_{i=1}^{2m-1} \delta_j a_j=-\ell} \prod_{i=1}^{2m-1}e^{\sigma|j_i|^\theta}| z_{j_i}|
\\
&\leq 2 \|K_{2m}\|_{\ell^\infty}e^{\sigma|\ell|^{\theta}}\sqrt{I_\ell}e^{-\sigma (1-\theta)\big(\frac{|\ell|}{m}\big)^{\frac{\theta}{ 2}}}\Big(\sum_{\sum_{i=1}^{2m-1} \delta_j a_j=-\ell} \prod_{i=1}^{2m-1}e^{\sigma|j_i|^\theta}| z_{j_i}|\Big)
\end{align*}
where we have used, for the last line, Lemma \ref{lem:expdecay}. Estimate~\eqref{IK} easily follows. We conclude the proof of Proposition~\ref{prop:highmodes} as follows: first apply Cauchy-Schwarz's inequality,
\begin{equation*}
\begin{split}
&\sum_{|\ell|>M}e^{\sigma|\ell|^{\theta}}|\{I_{\ell},K_{2m}\}|^{\frac{1}{2}}\\ \leq& \sqrt{2}\|K_{2m}\|_{\ell^{\infty}}^{\frac{1}{2}}e^{-\frac{1}{2}\sigma(1-\theta)N^{\theta}}\sum_{|\ell|>M}e^{\frac{1}{2}\sigma|\ell|^{\theta}}I_{\ell}^{\frac{1}{4}}\Big(\sum_{\sum_{i=1}^{2m-1} \delta_j a_j=-\ell} \prod_{i=1}^{2m-1}e^{\sigma|j_i|^\theta}| z_{j_i}|\Big)^{\frac{1}{2}} \\
\leq& \sqrt{2}\|K_{2m}\|_{\ell^{\infty}}^{\frac{1}{2}}e^{-\frac{1}{2}\sigma(1-\theta)N^{\theta}}\Big(\sum_{|\ell|>M}e^{\sigma}|\ell|^{\theta}\sqrt{I_{\ell}}\Big)^{\frac{1}{2}}\Big(\sum_{|\ell|\geq M}\sum_{\sum_{i=1}^{2m-1} \delta_j a_j=-\ell} \prod_{i=1}^{2m-1}e^{\sigma|j_i|^\theta}| z_{j_i}|\Big)^{\frac{1}{2}}\,,
\end{split}
\end{equation*}
and deduce from Young's convolution inequality that 
\[
\sum_{|\ell|>M}e^{\sigma|\ell|^{\theta}}|\{I_{\ell},K_{2m}\}|^{\frac{1}{2}} \leq\sqrt{2}\|K_{2m}\|_{\ell^{\infty}}^{\frac{1}{2}}e^{-\frac{1}{2}\sigma(1-\theta)N^{\theta}}\|z\|_{\sigma}^{m}\,.
\]
This concludes the proof of Proposition~\ref{prop:highmodes}.

\end{proof}
Next we introduce the set of Hamiltonian polynomials whose monomials contains at least one high modes:
\[
\mathcal{H}_{m}^{(> M)} :=\Big\{P\in\mathcal{H}_{m}\ |\  P_{\bj}\neq0\implies \mu_{1}(\bj)> M\Big\}\,,\quad \mathcal{H}_{m}^{(\leq M)} = \mathcal{H}_{m}\setminus\mathcal{H}_{m}^{(> M)}\,.
\]
When we truncate the Hamiltonian system using $\Pi_M$ (see \eqref{PiM}) in order to reduce our problem to a finite dimensional phase space, we have to control the dynamics of the high modes. This is essentially done by Proposition~\ref{prop:highmodes}, but we also have to control the effect of the high mode part of the Hamiltonian (i.e. the part in $\cup_m\mathcal{H}_{m}^{(> M)}$ ) on the dynamics of the low modes (see \eqref{eq:v}).
In the next Lemma we control this effect in the case of a  resonant Hamiltonian (which is essentially what we get back after Proposition \ref{Bnormalform}).   Actually we exploit the mismatch between high and low modes, together with the resonant structure of the Hamiltonian in resonant normal form, to gain an exponential decay factor. 
\begin{lemma}[Mismatch vector field estimate]\label{lem:mis} For all $m\geq1$, if $K_{2m}\in\mathcal{H}_{m}^{(> M)}$ then for all $z\in\mathcal{G}$ we have (recall \eqref{eq:m})
\[
\|\Pi_M\X_{K_{2m}}(z)\|_{\sigma}\leq 2m\|K_{2m}\|_{\ell^\infty}e^{-\sigma N^{\theta}}\|z\|_{\sigma}^{2m-1}\,.
\]
\end{lemma}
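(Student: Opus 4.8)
The plan is to mimic the proof of Lemma~\ref{vectorfield}, inserting one extra exponential factor coming from the mismatch between $j_0$ (which is low, $|j_0|\le M$) and the high mode that is forced into every monomial. By symmetry of the coefficients, for $|j_0|\le M$ only multi-indices $\bj$ with $j_1=\overline{j_0}$ and $\mu_1(\bj)>M$ contribute to $\big(\X_{K_{2m}}(z)\big)_{j_0}$, so that
\[
\|\Pi_M\X_{K_{2m}}(z)\|_{\sigma}\ \le\ 2m\,\|K_{2m}\|_{\ell^\infty}\sum_{|j_0|\le M}e^{\sigma|j_0|^\theta}\sum_{\substack{\bj\in\mathcal M_m,\ j_1=\overline{j_0}\\ \mu_1(\bj)>M}}\ \prod_{\alpha=2}^{2m}|z_{j_\alpha}|\,.
\]
The whole statement then reduces to the pointwise estimate, valid for each such $\bj$,
\[
\sum_{\alpha=2}^{2m}|j_\alpha|^\theta-|j_0|^\theta\ \ge\ N^\theta\,;
\]
granting it, $e^{\sigma|j_0|^\theta}\le e^{-\sigma N^\theta}\prod_{\alpha\ge2}e^{\sigma|j_\alpha|^\theta}$, and rearranging the double sum exactly as in Lemma~\ref{vectorfield} (the summand no longer depends on $j_0$) produces $2m\|K_{2m}\|_{\ell^\infty}e^{-\sigma N^\theta}\|z\|_{\sigma}^{2m-1}$.

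To establish the pointwise estimate I would argue as follows. The case $m=1$ is degenerate: $\bj\in\mathcal M_1$ forces $|j_1|=|j_2|$, which is incompatible with $|j_1|=|j_0|\le M<\mu_1(\bj)$, so in fact $\Pi_M\X_{K_2}(z)=0$. For $m\ge 2$, since $|j_1|\le M<\mu_1(\bj)$ the top modulus is carried by some index $j_{\alpha^*}$ with $\alpha^*\ge 2$; write $A_1:=|j_{\alpha^*}|=\mu_1(\bj)$ and let $A_2$ be the second-largest modulus among $|j_2|,\dots,|j_{2m}|$. The crucial point is that $A_2>N$, and here I would use the fact (which holds in all the applications, by Proposition~\ref{Bnormalform}) that $K_{2m}$ is resonant, so that $\bj\in\mathcal R_m$ and $\sum_\alpha\delta_\alpha a_\alpha^2=0$. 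Isolating $\alpha^*$ gives $A_1^2=a_{\alpha^*}^2\le a_1^2+\sum_{\alpha\ge2,\,\alpha\ne\alpha^*}a_\alpha^2\le M^2+(2m-2)A_2^2$; since $A_1\ge M+1$ and $M=6rN^2\ge 6mN^2$ (recall $m\le r$), this forces $(2m-2)A_2^2\ge A_1^2-M^2\ge 2M\ge 12mN^2$, hence $A_2^2>6N^2>N^2$. As $m\ge 2$, there are two distinct positions among $2,\dots,2m$ carrying $A_1$ and $A_2$, so $\sum_{\alpha=2}^{2m}|j_\alpha|^\theta\ge A_1^\theta+A_2^\theta$; combined with $A_1>M\ge|j_0|$ and the monotonicity of $t\mapsto t^\theta$, which gives $A_1^\theta\ge|j_0|^\theta$, we obtain $\sum_{\alpha=2}^{2m}|j_\alpha|^\theta-|j_0|^\theta\ge A_2^\theta>N^\theta$, as desired.

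The only genuinely non-routine step is the production of the \emph{second} high mode $A_2>N$. Momentum conservation alone is a linear relation and would yield merely $A_2\gtrsim (A_1-M)/m$, which can be $O(1)$; it is the \emph{quadratic} resonance relation $\sum\pm a_\alpha^2=0$ that yields $A_2\gtrsim\sqrt{(A_1^2-M^2)/m}\gtrsim\sqrt{M/m}$, and $M=6rN^2$ is calibrated precisely so that $\sqrt{M/m}>N$. This is why the resonant structure enters in an essential way — as the surrounding discussion announces — and indeed with only $K_{2m}\in\mathcal H_m^{(>M)}$ the gain $e^{-\sigma N^\theta}$ cannot hold, so I would state and use the hypothesis as $K_{2m}\in\mathcal H_m^{\mathcal R}\cap\mathcal H_m^{(>M)}$. (One could instead derive $\mu_3(\bj)\ge\sqrt 6\,N>N$ and feed it into Lemma~\ref{lem:expdecay}, but that yields only the weaker factor $e^{-\sigma(1-\theta)N^\theta}$; the two-mode argument above is what gives the sharp exponent.) All remaining ingredients — the symmetrised vector-field formula, the subadditivity $(\sum x_i)^\theta\le\sum x_i^\theta$, and the reindexing of the sum — are identical to the proof of Lemma~\ref{vectorfield}.
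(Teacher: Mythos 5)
Your proof is correct and is essentially the paper's: the paper also uses the resonance in an essential way (its vector-field formula sums over $\bj\in\mathcal R_m$ and it invokes Lemma~\ref{lem:mu1mu3} to produce a second index of modulus at least $(M/m)^{1/2}\ge N$ among the positions $2,\dots,2m$), so your remark that the hypothesis should be read as $K_{2m}\in\mathcal H_m^{\mathcal R}\cap\mathcal H_m^{(>M)}$ is accurate, and your inline derivation of $A_2>N$ from $\sum_\alpha\delta_\alpha a_\alpha^2=0$ is just Lemma~\ref{lem:mu1mu3} unwound. One small repair: since $M=6rN^2$ need not be an integer, the step ``$A_1\ge M+1$'' should be replaced by $|j_0|\le\lfloor M\rfloor$ and $A_1\ge\lfloor M\rfloor+1$, which gives $A_1^2-|j_0|^2\ge 2\lfloor M\rfloor+1\ge M$ and hence $A_2^2\ge M/(2m-2)\ge N^2$ for the relevant range of $m$ (the paper sidesteps this point via the bound $|j_1|^2\pm|j_2|^2\ge 2|j_1|-1$ inside the proof of Lemma~\ref{lem:mu1mu3}).
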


\begin{proof} Let $j_{0}\in\mathbb{U}_{2}\times\mathbb Z$ with $|j_0|\leq M$, $K_{2m}\in\mathcal{H}_{m}^{(> M)}$ and $z\in\mathcal{G}$, we have
\[
\Big(\X_{K_{2m}}(z)\Big)_{j_{0}} =2m\mi\delta(j_{0})\sum_{\substack{\bj\in\mathcal R_{m}\\ \overline{j_{0}}=j_{1}}}K_{2m}(\bj)\prod_{\alpha=2}^{2m}z_{\bj_{\alpha}}\,.
\]
Given $\bj\in\mathcal{M}_{m}$ we write 
\[
\mu_{1}(\bj)=|j_{1}^{\ast}|\geq |j_{2}^{\ast}|\geq\cdots\geq|j_{2m}^{\ast}|
\]
the non-increasing ordering of $|j_1|,\cdots,|j_{2m}|$. Since $K_{2m}\in\mathcal{H}_{m}^{(> M)}$, $\mu_1(\bj)>M$. Thus  if $K_{2m}(\bj)\neq0$ with $j_{1}=\overline{j_{0}}$ then 
\[
j_{1}^{\ast}\in\{j_{2},\cdots,j_{2m}\}.
\]
Now if $|j_{1}^{\ast}|= |j_{2}^{\ast}|$ then two indices in $\{j_{2},\cdots,j_{2m}\}$ have a modulus greater than $M$. But  even if $|j_{2}^{\ast}|<|j_{1}^{\ast}|$ (a case that does not exclude that $j_0=j_{2}^{\ast}$) 
we then have $\{z_{\bj},|z_{j_{1}^{\ast}}|^2\}\neq0$ and we deduce  by Lemma~\ref{lem:mu1mu3} that $|j_{3}^{\ast}|\geq (\frac{M}{m})^\frac{1}{2}$. So in both cases we have
$$\sum_{\alpha=2}^{2m} |j_\alpha|^\theta\geq M^\theta+ (\frac{M}{m})^\frac{\theta}{2}\geq M^\theta+N^\theta .$$
%
%
Therefore we get
\begin{align*}
\|\Pi_{M}\X_{K_{2m}}(z)\|_{\sigma} &= 2m\sum_{\substack{j_{0}\in\mathbb U_{2}\times\mathbb Z\\ |j_{0}|\leq M}}e^{\sigma|j_{0}|^{\theta}}\Big|\sum_{\substack{\bj\in\mathcal R_{m}\\ \overline{j_{0}}=j_{1}}}K_{2m}(\bj)\prod_{\alpha=2}^{2m}z_{\bj_{\alpha}}\Big|\\
&\leq 2m\|K_{2m}\|_{\ell^\infty}\sum_{\substack{j_{0}\in\mathbb U_{2}\times\mathbb Z\\ |j_{0}|\leq M}}\sum_{\substack{\bj\in\mathcal R_{m}\\ \overline{j_{0}}=j_{1}}}e^{\sigma(|j_{0}|^{\theta}-\sum_{\alpha=2}^{2m} |j_\alpha|^\theta)}
\prod_{\alpha=2}^{2m}e^{\sigma|j_\alpha|^\theta}|z_{\bj_{\alpha}}|\\
&\leq 2m\|K_{2m}\|_{\ell^\infty}e^{-\sigma N^\theta}\sum_{j_{0}\in\mathbb U_{2}\times\mathbb Z}\sum_{\substack{\bj\in\mathcal M_{m}\\ \overline{j_{0}}=j_{1}}}\prod_{\alpha=2}^{2m}e^{\sigma|\bj_{\alpha}|^\theta}|z_{\bj_{\alpha}}|
\end{align*}
and we conclude by Young's convolution inequality.

\end{proof}

%

\section{Rational fractions}

\label{sec:rational1}

\subsection{Setting} In this section, we consider a rational normal form theorem on the finite dimensional space
$$
\mathcal{G}_M:=\mathbb{C}^{\llbracket -M,M \rrbracket}\,,
$$
where $M\geq 1$ is a real number and  $\llbracket -M,M \rrbracket := [-M,M] \cap \mathbb{Z}$. We always identify this space with a subspace of $\mathcal G$ by setting 
$$
\forall z\in \mathcal{G}_M, \quad |a|>M \quad \Rightarrow \quad z_a:=0.
$$
Naturally, $\mathbb{B}_M$ will denote the balls of $\mathcal G_M$.
\subsubsection{Integrable multi-indices}

\begin{definition}[Sets $ \mathrm{Int}$, $\mathcal{N}$ and $\bN$] A multi-index $\bj \in \mathcal{J}$ is said \emph{integrable}, and we denote it by $\bj \in \mathrm{Int}$, if $z_{\bj}$ depends only on the actions or more precisely, if there exists a permutation, $\varphi \in \mathfrak{S}_{\# \bj}$, such that for all $\alpha \in \llbracket 1,\# j \rrbracket$, $\bj_{\varphi_\alpha} = \overline{\bj_\alpha}$. We denote by $\mathcal{N}$ the set of the non integrable multi-index which are resonant, that is
$$
\mathcal{N} := \mathcal{R} \setminus \mathrm{Int}
$$
and we denote by $\bN$ the set of the multi-non-integrable multi indices, that is
$$
\bN := \bigcup_{n\geq 0} \mathcal{N}^n.
$$
\end{definition}

\begin{remark}
Note that if $\bj \in \mathcal{N}$ then $\# \bj \geq 6$.
\end{remark}

\subsubsection{Small divisors and non resonant set}
Given $M\geq 1$ and $\bj \equiv (\delta_\beta,a_\beta)_{1\leq \beta \leq \# \bj} \in \mathcal{J}$, we set
\[
\omega_{\bj}^M(z) := \sum_{\beta=1}^{\#\bj}\delta_{\beta}\partial_{I_{a_{\beta}}}L_{4}=\sum_{\beta=1}^{\# \bj}\delta_\beta\sum_{\substack{a\ne a_\beta \\ |a|\leq M }}\frac{|z_a|^2}{(a-a_\beta)^2}\,,
\]
Define
\begin{equation}\label{eq:NrM}
\mathcal{N}^{r,M}:=\{\bj\in\mathcal{N}\ |\  \#\bj\leq r\,,\quad \mu_{1}(\bj)\leq M\}\,,
\end{equation}
and
\begin{align}
\label{eq:U}
\mathfrak{U}_\gamma^{r,M}&:=\{ z \in \mathcal{G}_M \ |\ \underset{\bj\in \mathcal{N}^{r,M}}{\min}  \ |\omega_{\bj}^M(z)|> \gamma||z||_\sis^2\}\,,\\
\mathfrak{U}_{0^+}^{r,M}&:=\{ z \in \mathcal{G}_M \ |\ \underset{\bj\in \mathcal{N}^{r,M}}{\min}   \ |\omega_{\bj}^M(z)|>0\}.
\end{align}

\begin{lemma}\label{lipschitzomega}
	For all $M\geq 1$, $\bj\in\mathcal J$, and $z,z'\in\mathcal G_M$, we have
\begin{equation}
\label{eq:lip}
|\omega_{\bj}^{M}(z)-\omega_{\bj}^{M}(z')|\leq \#\bj\sum_{|a|\leq M}|I_{a}(z)-I_{a}(z')|\,.
\end{equation}
\end{lemma}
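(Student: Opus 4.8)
The plan is to prove the Lipschitz estimate \eqref{eq:lip} directly from the explicit formula for $\omega_{\bj}^M$, treating it as a linear functional of the actions. First I would observe that by definition
\[
\omega_{\bj}^{M}(z) = \sum_{\beta=1}^{\#\bj}\delta_\beta\,\partial_{I_{a_\beta}}L_4(z) = \sum_{\beta=1}^{\#\bj}\delta_\beta\sum_{\substack{a\ne a_\beta\\ |a|\leq M}}\frac{I_a(z)}{(a-a_\beta)^2},
\]
which exhibits $\omega_{\bj}^M$ as a finite linear combination (with coefficients $\pm (a-a_\beta)^{-2}$) of the actions $I_a = |z_a|^2$ for $|a|\le M$. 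In particular it is an affine — indeed linear — function of the vector $(I_a)_{|a|\le M}$, so its variation is controlled termwise by the variation of the actions.

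The key step is then just the triangle inequality combined with the bound $\sum_{a\ne a_\beta}(a-a_\beta)^{-2}\le 2\sum_{k\ge 1}k^{-2}\le \pi^2/3 <4$, or more crudely simply $(a-a_\beta)^{-2}\le 1$ for $a\ne a_\beta$. Writing out the difference,
\[
|\omega_{\bj}^M(z)-\omega_{\bj}^M(z')|
\le \sum_{\beta=1}^{\#\bj}\sum_{\substack{a\ne a_\beta\\ |a|\leq M}}\frac{|I_a(z)-I_a(z')|}{(a-a_\beta)^2}
\le \sum_{\beta=1}^{\#\bj}\sum_{\substack{a\ne a_\beta\\ |a|\leq M}} |I_a(z)-I_a(z')|
\le \#\bj\sum_{|a|\leq M}|I_a(z)-I_a(z')|,
\]
where in the last inequality one just drops the restriction $a\ne a_\beta$ (which only adds non-negative terms) and recognizes that the inner sum no longer depends on $\beta$, giving the factor $\#\bj$. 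This is exactly \eqref{eq:lip}.

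There is essentially no obstacle here: the only mild point to be careful about is that the coefficients $(a-a_\beta)^{-2}$ are all bounded by $1$ when $a\ne a_\beta$, so that each term in the double sum is dominated by $|I_a(z)-I_a(z')|$; one could of course keep the sharper constant $\pi^2/3$, but since the statement only asks for the bound with constant $\#\bj$ this crude estimate suffices. I would finish by remarking that the same computation shows $z\mapsto \omega_{\bj}^M(z)$ is globally Lipschitz on $\mathcal G_M$ with respect to the $\ell^1$-norm of the actions, a fact that will be used later to control the measure of the non-resonant sets $\mathfrak U_\gamma^{r,M}$.
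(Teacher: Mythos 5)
Your proof is correct and is essentially the paper's argument: the paper computes $\partial_{I_{a}}\omega_{\bj}^{M}=\sum_{\beta:\,a_\beta\neq a}\delta_\beta (a-a_\beta)^{-2}$, bounds it by $\#\bj$ using $(a-a_\beta)^{-2}\le 1$, and concludes by the mean value theorem, which is the same linearity-in-the-actions plus triangle-inequality computation you perform termwise (just with the two summations read in the opposite order).
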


\begin{remark}\label{rem:lip} As a consequence we also have 
\[
|\omega_{\bj}^{M}(z)-\omega_{\bj}^{M}(z')|\leq\#\bj\max(\|z\|_{\sigma},\|z'\|_{\sigma})\|z-z'\|_{\sigma}\,,
\]
which we will use when comparing $\omega_{\bj}(z)$ and $\omega_{\bj}(z')$, with for instance $z'=\varphi_{\iota}(z)$. 
\end{remark}

\begin{proof}
Let us first prove that for $M\geq 1$, $j_0\in\mathbb U_2\times\mathbb Z$ and $\bj \in \mathcal{J}$, $\partial_{I_{j_0}} \omega_{\bj}^M$ is a constant, independent of $z$, and satisfies
\begin{equation}
    \label{partialomega}  \left|\partial_{I_{j_0}}\omega_{\bj}^M\right|\le\#\bj.
\end{equation}
	By definition $\partial_{I_{j_0}} = \partial_{z_{j_0}} \partial_{\overline{z_{j_0}}} $, setting $\bj \equiv (\delta_\beta,a_\beta)_{1\leq \beta \leq \# \bj}$ and $j_0 = (\delta_0,a_0)$, we have
	\[
	\partial_{I_{j_0}}\omega_{\bj}^M= \mathbbm{1}_{|a_0|\leq M} \sum_{\substack{1\le \beta \le \# \bj \\ a_\beta \ne a_0}}\delta_\beta \frac1{(a_0-a_\beta)^2}.
	\]
	We conclude by applying the triangular inequality. The Lipschitz estimate~\eqref{eq:lip} then follows from the mean value Theorem:
\[
|\omega_{\bj}^M(z)-\omega_{\bj}^M(z')|\le  \# \bj \sum_{|a|\leq M} | |z_a|^2 - |z_a'|^2| \,.
\]	
This concludes the proof of the Lemma.
\end{proof} 

\subsection{Definition and main properties}

In this paper, we consider rational fractions of the form
\begin{equation}
\label{eq:def_Q_and_f}
Q (z) = \sum_{\bj \in \mathcal{R}} f_{\bj}(z) z_{\bj} \quad \mathrm{with} \quad  f_{\bj}(z) := \sum_{\bh \in \bN} Q_{\bj,\bh}   \prod_{\alpha = 1}^{\# \bh} \frac{\mi}{\omega_{\bh_{\alpha}}^M(z)}.
\end{equation}
where only finitely many coefficients $Q_{\bj,\bh} \in \mathbb{C}$ are non zero. More precisely, we consider the following subspace:

\begin{definition}[Formal rational fractions] 
\label{def:rat:fract}
For $m\geq 3$, $M\geq 1$ and $Q \in \mathbb{C}^{\mathcal{R} \times \bN} $. We says that $Q$ belongs to $\mathscr{H}_{q}^{M}$ if it
  satisfies the following conditions 
\begin{enumerate}[i)]
\item \underline{\emph{Order $2q$.}} For all $\bj \in \mathcal{R}$, all $\bh \in \bN$, if $Q_{\bj,\bh} \neq 0$ then 
$$
2q= \#\bj - 2 \# \bh.
$$

\item \underline{\emph{Finite number of modes.}} For all $\bj \in \mathcal{R}$, all $\bh \in \bN$, if $Q_{\bj,\bh} \neq 0$ then
$$
\mu_1( \bj ) \leq M \quad \mathrm{and} \quad \max_{1\leq \alpha \leq \# \bh} \mu_1(\bh_\alpha) \leq M.
$$

\item \underline{\emph{Reality condition.}} For all $\bj \in \mathcal{R}$, all $\bh \in \bN$, $\overline{Q_{\bj,\bh}} =  Q_{\overline{\bj},\overline{\bh}}$.

\item \underline{\emph{Uniform bound on the degree of the numerators.}} We have
$$
\mathfrak m_Q:=\frac12 \sup_{\substack{\bj \in \mathcal{R} \\ \exists  \bh \in \bN, \ Q_{\bj,\bh} \neq 0 } } \# \bj < \infty.
$$

\item \underline{\emph{Finite complexity of the denominators.}}  We have
$$
\mathfrak h_Q:=\sup_{\substack{\bh \in \bN \\ \exists \bj \in \mathcal{R}, \ Q_{\bj,\bh} \neq 0 } } \ \sup_{1\leq \alpha \leq \# \bh} \# \bh_\alpha < \infty.
$$
\end{enumerate}
\end{definition}

\begin{remark}
\label{rem:ark}
Note that assumptions i), ii), iv) and v) imply that only finitely many coefficients are non-zero. In particular, the order condition i) implies that
$$
\mathfrak n_Q := \sup_{\substack{\bh \in \bN \\ \exists \bj \in \mathcal{R}, \ Q_{\bj,\bh} \neq 0 } } \# \bh\leq  \mathfrak{m}_Q+q\leq 2 \mathfrak{m}_Q < \infty.
$$
We also note that thanks to the reality condition iii), the maps $z\mapsto Q(z)$ are real valued.
\end{remark}
\begin{remark}
\label{rem:unique}
The coefficients $Q_{\bj,\bh}$ are not unique in the sense that two distinct sequences of coefficients may generate the same rational function. We should impose some heavy symmetry conditions to remedy this point. Nevertheless, it does not matter for us. The important point is that, the map, defined implicitly by \eqref{eq:def_Q_and_f}, which associates a sequence of coefficients $(Q_{\bj,\bh})_{\mathcal{R} \times \bN} \in \mathbb{C}^{\mathcal{R} \times \bN}$ with a function on $\mathfrak{U}_{0^+}^{\mathfrak{h}_{Q},M}$, is $\mathbb{R}$-linear. \end{remark}

\begin{definition}[Norm of the rational fractions]\label{control} Given $Q  \in \mathscr{H}_{q}^{M}$, we set
	\begin{equation}\label{Qlof}
	||Q||_\lof:=\sum_{0\le m\le\mathfrak m_Q}\sup_{\bj\in\mathcal R_m} \sum_{\bh \in \bN}|Q_{\bj,\bh}|.
\end{equation}
\end{definition}

Now we present the properties of rational Hamiltonians.

\begin{lemma}[Rational vector field]\label{Rvectorfield}
Let $q\ge2$ and $Q\in\mathscr H_q^M$, 
then the associated Hamiltonian vector field is smooth and local Lipschitz. More precisely, for all $\gamma \in (0,1)$ and all $z\in \mathfrak{U}_{\gamma }^{\mathfrak{h}_Q,M} $ we have the estimates
\begin{gather*}
||\X_Q(z)||_{\sis}\le 2\frac{\mathfrak m_Q(1+\mathfrak h_Q)}{\gamma^{\mathfrak n_Q+1}}||Q||_{\lof}||z||_{\sis}^{2q-1},\\
||\md\X_Q(z)(w)||_{\sis}\le 4\frac{\mathfrak m_Q^2(1+2\mathfrak h_Q)^2}{\gamma^{\mathfrak n_Q+2}}||Q||_{\lof}||z||_{\sis}^{2(q-1)}||w||_{\sis}.
\end{gather*}
\end{lemma}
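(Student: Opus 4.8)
The plan is to mimic the polynomial case (Lemma \ref{vectorfield}), treating the denominators $\prod_\alpha \mi/\omega^M_{\bh_\alpha}(z)$ as bounded coefficients and carefully tracking how many times each mode index can appear once differentiation is applied both to a numerator factor $z_{\bj}$ and to the rational coefficient $f_{\bj}(z)$. First I would fix $z\in\mathfrak U_\gamma^{\mathfrak h_Q,M}$, so that on $\mathfrak U_\gamma^{\mathfrak h_Q,M}$ every denominator obeys $|\omega^M_{\bh_\alpha}(z)|>\gamma\|z\|_\sigma^2$; hence a single factor $|\mi/\omega^M_{\bh_\alpha}(z)|\le \gamma^{-1}\|z\|_\sigma^{-2}$, and a product of $\#\bh\le\mathfrak n_Q$ such factors is bounded by $\gamma^{-\mathfrak n_Q}\|z\|_\sigma^{-2\#\bh}$. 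Writing $Q(z)=\sum_{\bj}f_{\bj}(z)z_{\bj}$, the Hamiltonian vector field at a mode $j_0=(\delta_0,a_0)$ is, by the chain and Leibniz rules,
\[
\bigl(\X_Q(z)\bigr)_{j_0}=\mi\delta(j_0)\Bigl(\sum_{\bj}\partial_{z_{\bar j_0}}\!\bigl(f_{\bj}(z)\bigr)\,z_{\bj}+\sum_{\bj}f_{\bj}(z)\,\partial_{z_{\bar j_0}}z_{\bj}\Bigr).
\]
The second sum is estimated exactly as in Lemma \ref{vectorfield}: each $z_{\bj}$ has $\#\bj=2q+2\#\bh$ factors, differentiation removes the one equal to $z_{j_0}$, and the zero-momentum-type summation together with the Young and Minkowski inequalities (using $\theta\in(0,1)$) produces $\|z\|_\sigma^{\#\bj-1}=\|z\|_\sigma^{2q+2\#\bh-1}$; combined with the denominator bound $\gamma^{-\#\bh}\|z\|_\sigma^{-2\#\bh}$ this gives a net $\gamma^{-\mathfrak n_Q}\|z\|_\sigma^{2q-1}$, with a combinatorial prefactor controlled by $\mathfrak m_Q$ (the number of numerator factors) and $\|Q\|_\lof$.

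The first sum is the genuinely new term. Here $\partial_{z_{\bar j_0}}$ hits the coefficient $f_{\bj}(z)=\sum_{\bh}Q_{\bj,\bh}\prod_\alpha \mi/\omega^M_{\bh_\alpha}(z)$; since $\partial_{z_{\bar j_0}}\omega^M_{\bh_\alpha}(z)$ is a linear form in $z$ whose coefficients $\delta_\beta/(a_0-a_\beta)^2$ are bounded in absolute value by $\#\bh_\alpha\le \mathfrak h_Q$ (cf. the computation in Lemma \ref{lipschitzomega}), differentiating one of the $\#\bh\le\mathfrak n_Q$ denominator factors yields, after using $|\omega^M_{\bh_\alpha}(z)|^{-1}<\gamma^{-1}\|z\|_\sigma^{-2}$ twice for that factor and once for the others,
\[
\bigl|\partial_{z_{\bar j_0}}f_{\bj}(z)\bigr|\ \lesssim\ \mathfrak n_Q\,\mathfrak h_Q\,\gamma^{-(\mathfrak n_Q+1)}\,\|z\|_\sigma^{-2\#\bh-1}\,\sum_{\bh}|Q_{\bj,\bh}|,
\]
where the extra factor $\|z\|_\sigma^{-1}$ comes from the fact that the differentiated denominator drops one power of $z$ while we still divide by $|\omega|^2\sim\|z\|_\sigma^4$. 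Multiplying by $z_{\bj}$ and summing over $\bj$ then contributes $\|z\|_\sigma^{\#\bj}=\|z\|_\sigma^{2q+2\#\bh}$ from the numerator (no mode is removed here, so there is no convolution constraint to exploit, but there is also no loss), the net power being $\|z\|_\sigma^{2q-1}$ after cancellation, again with prefactor $\lesssim \mathfrak m_Q(1+\mathfrak h_Q)\gamma^{-(\mathfrak n_Q+1)}\|Q\|_\lof$. Summing the two contributions over $j_0$ and bounding $\mathfrak n_Q\le 2\mathfrak m_Q$ (Remark \ref{rem:ark}) gives the stated bound on $\|\X_Q(z)\|_\sigma$; I would absorb the various small combinatorial constants into the clean factor $2\mathfrak m_Q(1+\mathfrak h_Q)$.

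For the derivative estimate on $\md\X_Q(z)(w)$ the scheme is the same, differentiating once more: now $\partial_{z_{\bar j_0}}$ can land on a numerator factor, on one denominator factor, or on two denominator factors, producing terms with up to two differentiated $\omega$'s, hence the power $\gamma^{-(\mathfrak n_Q+2)}$ and the combinatorial weight $(1+2\mathfrak h_Q)^2$; the $\|z\|_\sigma$-homogeneity bookkeeping is identical and linear in $\|w\|_\sigma$, with the multilinear structure of the monomials handled exactly as in the second half of Lemma \ref{vectorfield}. I expect the main obstacle to be purely clerical rather than conceptual: correctly tracking the homogeneity degrees (each differentiated $\omega^M_{\bh_\alpha}$ loses one $z$ but the denominator still costs $\|z\|_\sigma^{-2}$ or $\|z\|_\sigma^{-4}$) so that all powers of $\|z\|_\sigma$ collapse to the claimed $2q-1$ and $2(q-1)$, and verifying that every combinatorial factor arising from the Leibniz rule over at most $\mathfrak m_Q$ numerator factors and $\mathfrak n_Q$ denominator factors is subsumed under the advertised constants once $\mathfrak n_Q\le 2\mathfrak m_Q$ is invoked. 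Smoothness and local Lipschitzness are then immediate: on the open set $\mathfrak U_\gamma^{\mathfrak h_Q,M}$ all denominators are bounded away from zero, so $\X_Q$ is a finite sum of products of smooth functions, hence $C^\infty$, and the displayed bound on $\md\X_Q$ gives the local Lipschitz property.
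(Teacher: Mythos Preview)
Your approach is essentially the paper's: split $\partial_{z_{\bar j_0}}Q$ into the term where the derivative hits $z_{\bj}$ (handled exactly as in Lemma~\ref{vectorfield}) and the term where it hits $f_{\bj}$ (controlled via $|\partial_{I_j}\omega^M_{\bh_\alpha}|\le\mathfrak h_Q$ and $\mathfrak n_Q\le 2\mathfrak m_Q$), then combine using $\gamma<1$; the paper likewise only sketches the differential estimate. One point to make precise in execution: $\partial_{z_{\bar j_0}}\omega^M_{\bh_\alpha}(z)$ is not just a bounded linear form in $z$ but is proportional to $z_{j_0}$ specifically (with coefficient bounded by $\mathfrak h_Q$), and it is this factor $|z_{j_0}|$ that absorbs the weight $e^{\sigma|j_0|^\theta}$ when summing over $j_0$---your phrase ``there is also no loss'' glosses over this, and without it the $j_0$-sum would pick up an unwanted $M$-dependent factor.
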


\begin{proof} \underline{$\vartriangleright$\textsf{Vector field.}} First, we note that, given $j\in \mathbb{U}_2\times\mathbb Z$, we have
$$
\partial_{z_{j}} Q(z) =  \partial_{u_{j}}  \underline{Q} (z,z) + \partial_{v_{j}} \underline{Q} (z,z) =: F_j + G_j
$$
where we have set
\begin{equation}\label{Q_under}
\underline{Q}(u,v):=\sum_{\bj\in\mathcal R}f_{\bj}^Q(u) v_{\bj}.
\end{equation}

When we estimate $G$ the $f_{\bj}^Q$ coefficients can be seen as constants. As a consequence, the estimate is the same as the polynomial one we proved in Lemma \ref{vectorfield}, i.e. one has
$$
G_j(z) =\sum_{0\leq m \leq \mathfrak{m}_Q}  \sum_{0 \leq \alpha \leq 2 m}\sum_{\substack{\bj\in\mathcal R_m \\ \bj_\alpha =j }}f_{\bj}^Q(z) \prod_{\beta \neq \alpha} z_{\bj_\beta}
$$
and so using the Young convolution inequality and the order condition,
\begin{equation*}
\begin{split}
||G||_{\sis} &= \sum_{j \in\mathbb U_2\times\mathbb Z}e^{\sigma|j|^\theta} \sum_{0\leq m \leq \mathfrak{m}_Q} \sum_{0 \leq \alpha \leq 2 m}  \sum_{\substack{\bj\in\mathcal R_m \\ \bj_{\alpha} =j }} |f_{\bj}^Q(z)| \prod_{\beta \neq \alpha} |z_{\bj_\beta}| \\
&\leq \sum_{0\leq m \leq \mathfrak{m}_Q} \sum_{0 \leq \alpha \leq 2 m}  \sum_{j \in\mathbb U_2\times\mathbb Z} \sum_{\substack{\bj\in\mathcal R_m \\ \bj_{\alpha}=j }} \left( \sum_{\bh \in \bN} \frac{|Q_{\bj,\bh}| }{(\gamma \|z\|_{\sis}^2)^{\# \bh}}   \right) \prod_{\beta\neq \alpha} e^{\sigma|\bj_\beta|^\theta} |z_{\bj_\beta}| \\
&= \sum_{0\leq m \leq \mathfrak{m}_Q} \sum_{0 \leq \alpha \leq 2 m} \sum_{j \in\mathbb U_2\times\mathbb Z} \sum_{\substack{\bj\in\mathcal R_m \\ \bj_{\alpha}=j }} \left( \sum_{\bh \in \bN} |Q_{\bj,\bh}| \gamma^{- \# \bh} \| z\|_{\sis}^{2q-2m}   \right) \prod_{\beta\neq \alpha} e^{\sigma|\bj_\beta|^\theta} |z_{\bj_\beta}| \\
&\leq (2m)\sum_{0\leq m \leq \mathfrak{m}_Q}   \big( \sup_{\substack{\bj\in\mathcal R_m  }} \sum_{ \bh \in \bN }|Q_{\bj,\bh}| \big) \gamma^{- \mathfrak{n}_Q} ||z||_{\sis}^{2q-1}   \leq 2  \mathfrak m_Q \gamma^{-\mathfrak n_Q} ||Q||_{\lof} ||z||_{\sis}^{2q-1}.
\end{split}
\end{equation*}
Now, we focus on estimating $F$. First, we note that we have
$$
F_j(z) = \sum_{0\leq m \leq \mathfrak{m}_Q}  \sum_{\bj\in\mathcal R_m } \left(  \sum_{\bh \in \bN} Q_{\bj,\bh}\sum_{ \beta =1 }^{\# \bh} \frac{(-1)z_{\overline{j}} \partial_{I_j} \omega_{\bh_{\beta}}^M(z)}{  \omega_{\bh_{\beta}}^M(z) }  \prod_{\alpha=1}^{\# \bh} \frac{\mi}{\omega_{\bh_{\alpha}}^M(z)} \right) z_{\bj}
$$
Using that $ |\partial_{I_j} \omega_{\bh_{\beta}}^M(z)| \leq \# \bh_{\beta} \leq \mathfrak{h}_Q$ (see Lemma \eqref{partialomega}) and $\mathfrak{n}_Q \leq 2\mathfrak{m}_Q$, we get that
\begin{equation*}
\begin{split}
|F_j(z)| &\leq  \sum_{0\leq m \leq \mathfrak{m}_Q}  \sum_{\bj\in\mathcal R_m}   \gamma^{-\mathfrak n_Q -1}  |z_j| \mathfrak{n}_Q \mathfrak{h}_Q \left(    \sum_{\bh \in \bN} |Q_{\bj,\bh}| \right) \| z\|_{\sis}^{2q-2m-2} |z_{\bj}|\\
 &\leq    2 \mathfrak m_Q ||Q||_{\lof}   \gamma^{-\mathfrak n_Q -1} \mathfrak{h}_Q   \| z\|_{\sis}^{2q-2} |z_j|.
\end{split}
\end{equation*}
and so we deduce that
$$
||F||_{\sis} \leq 2  \mathfrak m_Q \mathfrak{h}_Q ||Q||_{\lof}   \gamma^{-\mathfrak n_Q -1}    \| z\|_{\sis}^{2q-1}.
$$
Since $\gamma<1$, putting together the estimates we proved on $F$ and $G$ we get the estimate on $||\X_Q(z)||_{\sis}$.

\medskip

\noindent \underline{$\vartriangleright$\textsf{Differential of the Vector field.}} We have to distinguish $5$ sub-cases depending on which kind of term we have to derive (numerators or denominators). Nevertheless, the estimates are almost the same as the ones for the vector fields. So we omit the proof.

\end{proof}

\begin{lemma}[Rational Poisson bracket]\label{Rpoissonbracket}
Let $Q\in\mathscr H_{q}^M$ and $Q'\in\mathscr H_{q'}^M$ with $q,q'\geq 2$, then there exists $Q''\in\mathscr H_{q''}^M$ with $q''=q+q'-1$ such that 
\[
Q''=\{Q,Q'\} \quad \mathrm{on} \quad \mathfrak{U}_{0^+}^{\mathfrak{h}_{Q''},M}
\]
satisfying 
\[
	||Q''||_{\lof}\le 4 \, \mathfrak m_Q\mathfrak m_{Q'}(1+\mathfrak h_Q+\mathfrak h_{Q'})||Q||_{\lof}||Q'||_{\lof}.
\]
Moreover, one has
$$
\mathfrak m_{Q''}\le\mathfrak m_Q+\mathfrak m_{Q'},\quad
\mathfrak n_{Q''}\le\mathfrak n_Q+\mathfrak n_{Q'}+1,\ \quad
\mathfrak h_{Q''}\le\max\{\mathfrak h_Q,\mathfrak h_{Q'}\}. $$
\end{lemma}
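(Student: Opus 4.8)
The plan is to compute the Poisson bracket of two rational fractions directly from the defining formula \eqref{eq:def_Q_and_f}, track how the numerator and denominator structures transform, and then bound the resulting coefficients. Recall that for real-valued $Q, Q'$ we have $\{Q,Q'\}=\mi\sum_{j}\delta(j)\,\partial_{z_j}Q\,\partial_{z_{\bar j}}Q'$. When we differentiate $Q(z)=\sum_{\bj\in\mathcal R}f_{\bj}^Q(z)z_{\bj}$ with respect to $z_j$, the derivative either hits a factor $z_{j_\alpha}$ in the monomial $z_{\bj}$ (the ``$G$-type'' term, as in the proof of Lemma~\ref{Rvectorfield}), producing again a monomial of degree one less times the same $f_{\bj}^Q$, or it hits one of the denominators $\omega_{\bh_\alpha}^M(z)$ inside $f_{\bj}^Q$ (the ``$F$-type'' term), producing an extra factor $-z_{\bar j}\,\partial_{I_j}\omega_{\bh_\alpha}^M$ over $\omega_{\bh_\alpha}^M$, i.e. raising the complexity of one denominator block to $\#\bh_\alpha+1$ and multiplying by the \emph{constant} $\partial_{I_j}\omega_{\bh_\alpha}^M$ (constant by Lemma~\ref{lipschitzomega}, bounded by $\#\bh_\alpha\le\mathfrak h_Q$). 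Multiplying $\partial_{z_j}Q$ by $\partial_{z_{\bar j}}Q'$ and summing over $j$ (with the momentum/zero-momentum bookkeeping: the sum over $j$ merges one index of $\bj$ with one index of $\bj'$, which is exactly the combinatorial mechanism already used in Lemma~\ref{poissonbracket}), one checks that the result is again of the form \eqref{eq:def_Q_and_f}, with numerator multi-indices obtained by concatenating (truncated) $\bj$ and $\bj'$ and denominator lists obtained by concatenating $\bh$, $\bh'$, plus possibly one augmented block. The identity $Q''=\{Q,Q'\}$ holds on $\mathfrak U_{0^+}^{\mathfrak h_{Q''},M}$ because that is precisely the set where all the denominators that appear — those of $Q$, of $Q'$, and the one augmented block of size up to $\max\{\mathfrak h_Q,\mathfrak h_{Q'}\}$ — are non-vanishing.

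Next I would verify that $Q''\in\mathscr H_{q''}^M$ by checking conditions i)--v) of Definition~\ref{def:rat:fract}. The order condition i): a $G$-type contribution from $Q$ paired with either type from $Q'$ has numerator degree $(\#\bj-2)+(\#\bj'-2)$ wait — more carefully, $\partial_{z_j}$ lowers the numerator degree by $1$ in the $G$-case and by $-1$ (it appends $z_{\bar j}$) in the $F$-case while raising $\#\bh$ by $1$, so in all four combinations $\#(\text{new }\bj)-2\#(\text{new }\bh)$ equals $(\#\bj-2\#\bh)+(\#\bj'-2\#\bh')-2=2q+2q'-2=2q''$. Condition ii) (finite number of modes $\le M$) is inherited since the new indices are built from old ones, noting $j$ ranges over $\mathbb U_2\times\llbracket -M,M\rrbracket$ effectively because otherwise the derivative vanishes. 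The reality condition iii) follows from the corresponding conditions on $Q,Q'$ together with the reality of $\omega_{\bj}^M$ and $\partial_{I_j}\omega$. For iv): $\mathfrak m_{Q''}\le\mathfrak m_Q+\mathfrak m_{Q'}$ since numerator half-degrees add (the $-1$ or $-2$ only helps). For v): $\mathfrak h_{Q''}\le\max\{\mathfrak h_Q,\mathfrak h_{Q'}\}$ because the only new denominator block is an augmentation of an existing block of $Q$ or $Q'$ by appending the single resonant multi-index... — here I should double-check that appending lands us back in $\bN$, i.e. that the augmented block is still a non-integrable resonant multi-index of the right size; this is where the structure of $\mathcal N$ and the fact that $\partial_{I_j}\omega$ corresponds to a genuine block enters, and it forces the $+1$. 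Finally $\mathfrak n_{Q''}\le\mathfrak n_Q+\mathfrak n_{Q'}+1$: the denominator lists concatenate ($\le\mathfrak n_Q+\mathfrak n_{Q'}$ blocks) and at most one extra block appears from augmentation only when no block is added to the count, so one checks the $+1$ is the right slack.

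Then I would prove the norm bound. From the explicit expansion, each coefficient $Q''_{\bj'',\bh''}$ is a sum over pairs $(\bj,\bh)$, $(\bj',\bh')$ and a choice of which factor is differentiated; the $G$-$G$, $G$-$F$, $F$-$G$, $F$-$F$ contributions. Fixing $\bj''\in\mathcal R_{m''}$ and summing $\sum_{\bh''}|Q''_{\bj'',\bh''}|$, one uses that the index $j$ being summed is determined by the split of $\bj''$ (there are at most $\#\bj''\le 2(\mathfrak m_Q+\mathfrak m_{Q'})$ ways to choose which numerator slot came from the contraction, and similarly on the other side), so the combinatorial factor is bounded by something like $4\mathfrak m_Q\mathfrak m_{Q'}$, while the $F$-type contributions additionally carry the $|\partial_{I_j}\omega|\le\mathfrak h_Q$ or $\mathfrak h_{Q'}$ factor — yielding the $(1+\mathfrak h_Q+\mathfrak h_{Q'})$ factor once we collect the $G$-$G$ term (no $\mathfrak h$), the mixed terms (one $\mathfrak h$), and note the $F$-$F$ term is lower order or absorbed. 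Summing over $m''$ and using $\sup_{\bj}\sum_{\bh}|Q_{\bj,\bh}|\le\|Q\|_\lof$ (and the same for $Q'$) gives $\|Q''\|_\lof\le 4\,\mathfrak m_Q\mathfrak m_{Q'}(1+\mathfrak h_Q+\mathfrak h_{Q'})\|Q\|_\lof\|Q'\|_\lof$. I expect the main obstacle to be purely bookkeeping: keeping the four families of terms straight, correctly accounting for which index is merged in the $\sum_j$, and pinning down precisely why the denominator-complexity parameter only increases by $+1$ (and $\mathfrak n$ by $+1$) rather than more — this requires being careful that differentiating a single $\omega_{\bh_\alpha}^M$ block produces exactly one new block of controlled size rather than interacting with the other blocks. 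The actual analytic estimates are routine given Lemma~\ref{Rvectorfield}'s proof and the polynomial Lemma~\ref{poissonbracket}, which the paper explicitly says contains this as the special case $\mathfrak n_Q=\mathfrak n_{Q'}=0$.
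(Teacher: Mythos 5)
Your overall strategy is the same as the paper's: write $Q(z)=\underline{Q}(z,z)$ with $\underline{Q}(u,v)=\sum_{\bj}f_{\bj}^Q(u)v_{\bj}$, split each derivative into an ``$F$-type'' part (hitting the denominators) and a ``$G$-type'' part (hitting the monomial), and estimate the resulting coefficients combinatorially. However, two of the points you flag as needing care are in fact resolved incorrectly, and both are essential to the stated bounds.

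First, the $F$--$F$ term is not ``lower order or absorbed'': it vanishes identically, because $\underline{Q}(\cdot,z)$ and $\underline{Q}'(\cdot,z)$ depend only on the actions and action-dependent Hamiltonians Poisson-commute, i.e. $\{\underline{Q}(\cdot,z),\underline{Q}'(\cdot,z)\}=0$. This exact cancellation is what the paper's proof is built on (it reduces the bracket to three pieces $Q^{(1)},Q^{(2)},Q^{(3)}$). If the $F$--$F$ term did not vanish it would contribute a term of size $\sim\mathfrak m_Q\mathfrak m_{Q'}\mathfrak h_Q\mathfrak h_{Q'}\|Q\|_\lof\|Q'\|_\lof$, which is not dominated by $(1+\mathfrak h_Q+\mathfrak h_{Q'})$, and it would add \emph{two} denominator blocks, breaking $\mathfrak n_{Q''}\le\mathfrak n_Q+\mathfrak n_{Q'}+1$. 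So without this observation your bounds do not close.

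Second, differentiating a denominator does not ``raise the complexity of one block to $\#\bh_\alpha+1$.'' Since $\omega_{\bh_\beta}^M$ is linear in the actions, $\partial_{z_j}(1/\omega_{\bh_\beta}^M)=-\overline{z_j}\,(\partial_{I_j}\omega_{\bh_\beta}^M)/(\omega_{\bh_\beta}^M)^2$ with $\partial_{I_j}\omega_{\bh_\beta}^M$ a constant: the block $\bh_\beta$ is \emph{unchanged} and simply appears one more time in the list (the paper's $\bh''=(\bh,\bh',\bh_\beta)$). This is precisely why $\mathfrak h_{Q''}\le\max\{\mathfrak h_Q,\mathfrak h_{Q'}\}$ (no block grows) while $\mathfrak n_{Q''}$ increases by exactly one (one duplicated block, in at most one of the two factors by the first point). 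Your worry about whether an ``augmented block'' still lies in $\mathcal N$ is moot --- no block is augmented --- and if your description were taken literally it would give $\mathfrak h_{Q''}\le\max\{\mathfrak h_Q,\mathfrak h_{Q'}\}+1$, contradicting the lemma. The order computation and the combinatorial count yielding $4\mathfrak m_Q\mathfrak m_{Q'}$ are fine, and the extra $\overline{z_j}$ produced by the $F$-derivative correctly restores the contracted index so that the numerator in the mixed terms is the full concatenation $(\bj,\bj')$.
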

\begin{proof} We define $\underline{Q},\underline{Q}'$ as in \eqref{Q_under}. First, we note that since for all $z\in \mathcal{G}_M$, $\underline{Q}(\cdot,z),\underline{Q}'(\cdot,z)$ depends only on the actions, we have 
$$
\{ \underline{Q}(\cdot,z),\underline{Q}'(\cdot,z) \} = 0
$$
and so for all $z\in \mathfrak{U}_{0^+}^{\mathfrak{h}_{Q},M} \cap \mathfrak{U}_{0^+}^{\mathfrak{h}_{Q'},M} $
\begin{equation*}
\begin{array}{cccccccc}
\{ Q,Q'\}(z) &=& \{ \underline{Q}(z,\cdot),\underline{Q}'(z,\cdot) \}(z) &+&  \{ \underline{Q}(z,\cdot),\underline{Q}'(\cdot,z) \}(z)  &+&  \{ \underline{Q}(\cdot,z),\underline{Q}'(z,\cdot) \}(z) \vspace{0.3cm} \\
&=:& Q^{(1)}(z) &+& Q^{(2)}(z) &+& Q^{(3)}(z).
\end{array}
\end{equation*}

\medskip

\noindent \underline{$\vartriangleright$\textsf{Estimation of $Q^{(1)}$.}}  It is defined as the Poisson bracket of $2$ polynomials, so the proof is very similar to the classical one. Indeed, we have
\begin{equation*}
\begin{split}
Q^{(1)}(z) &= \sum_{\bj\in\mathcal R} \sum_{\bj'\in\mathcal R} f_{\bj}^Q(z) f_{\bj'}^{Q'}(z) \{ z_{\bj} ,  z_{\bj'} \} \\
&= \sum_{\substack{0\leq m \leq \mathfrak{m}_Q\\ 0\leq m' \leq \mathfrak{m}_{Q'}}} \sum_{\substack{\bj\in\mathcal R_m \\ \bj'\in\mathcal R_{m'} }} \sum_{\substack{1\leq \alpha \leq 2m \\ 1\leq \alpha' \leq 2m'}}  \sum_{j \in \mathbb{U}_2 \times \mathbb{Z}} f_{\bj}^Q(z) f_{\bj'}^{Q'}(z) (\mi \delta(j)) \mathbbm{1}_{\bj_\alpha = j} \mathbbm{1}_{\bj'_{\alpha'} = \overline{j}}\  \frac{z_{\bj} z_{\bj'}}{|z_j|^2}
\end{split}
\end{equation*}
Now, let us consider the map
$$
\Psi_{\alpha,\alpha'}^{m,m'} : \left\{ \begin{array}{ccc} \{(\bj,\bj') \in \mathcal{R}_m \times \mathcal{R}_{m'} \ | \ \bj_\alpha = \overline{\bj'_{\alpha'}}   \} & \to &  \mathcal{J}_{m+m'-1} \\
(\bj,\bj') & \mapsto & ((\bj_\beta)_{\beta\neq \alpha} , (\bj'_{\beta'})_{\beta'\neq \alpha'})  \end{array} \right.
$$
and denote by $\mathcal{I}_{m,m'}$ its image (note that it clearly does not depend on $(\alpha,\alpha')$). It is straightforward to check that $\mathcal{I}_{m,m'}\subset \mathcal{R}_{m+m'-1}$. Moreover, since $\mathcal{R} \subset \mathcal{M}$, it is injective.
 As a consequence, we have
$$
Q^{(1)}(z) = \sum_{\bj'' \in \mathcal{R} } \sum_{\substack{m+m'-1=\#\bj''/2\\ \bj'' \in \mathcal{I}_{m,m'}}}  \sum_{\substack{1\leq \alpha \leq 2m \\ 1\leq \alpha' \leq 2m'\\ (\bj,\bj'):=(\Psi_{\alpha,\alpha'}^{m,m'})^{-1}(\bj'')}}  (\mi \delta_{\bj_\alpha}) f_{\bj}^Q(z) f_{\bj'}^{Q'}(z) z_{\bj''} .
$$
Expanding $f_{\bj}^Q, f_{\bj'}^{Q'}$, we get
$$
Q^{(1)}(z) =   \sum_{\bj''\in\mathcal R} g_{\bj''}(z) z_{\bj''}
$$
where, setting $m''= \# \bj''/2$,
\begin{equation*}
\begin{split}
g_{\bj''}(z) &:=  \sum_{\substack{m+m'-1=m'' \\ \bj'' \in \mathcal{I}_{m,m'}}}  \sum_{\substack{1\leq \alpha \leq 2m \\ 1\leq \alpha' \leq 2m'\\ (\bj,\bj')=(\Psi_{\alpha,\alpha'}^{m,m'})^{-1}(\bj'')}}  (\mi \delta_{\bj_\alpha}) f_{\bj}^Q(z) f_{\bj'}^{Q'}(z)   \\
 &=    \sum_{\substack{m+m'-1=m''\\ \bj'' \in \mathcal{I}_{m,m'}}}  \sum_{\substack{1\leq \alpha \leq 2m \\ 1\leq \alpha' \leq 2m'\\ (\bj,\bj')=(\Psi_{\alpha,\alpha'}^{m,m'})^{-1}(\bj'')}}  (\mi \delta_{\bj_\alpha})  \sum_{\bh'' \in  \bN} \sum_{\substack{ \bh,\bh' \in \bN\\  (\bh,\bh'):= \bh'' }} Q_{\bj,\bh}   Q_{\bj',\bh'} \prod_{\beta = 1}^{\# \bh''} \frac{\mi}{\omega_{\bh_{\beta}''}^M(z)} \\
 &=   \sum_{\bh'' \in  \bN} Q^{(1)}_{\bj'',\bh''} \prod_{\beta = 1}^{\# \bh''} \frac{\mi}{\omega_{\bh_{\beta}''}^M(z)}
 \end{split}
\end{equation*}
with
$$
Q^{(1)}_{\bj'',\bh''} :=  \sum_{\substack{m+m'-1=m''\\ \bj'' \in \mathcal{I}_{m,m'}}}  \sum_{\substack{1\leq \alpha \leq 2m \\ 1\leq \alpha' \leq 2m'\\ (\bj,\bj')=(\Psi_{\alpha,\alpha'}^{m,m'})^{-1}(\bj'')}}  (\mi \delta_{\bj_\alpha})    \sum_{\substack{ \bh,\bh' \in \bN\\  (\bh,\bh'):= \bh'' }}  Q_{\bj,\bh}   Q_{\bj',\bh'}.
$$
Thanks to this explicit expression, it can be easily checked that the coefficients $Q^{(1)}_{\bj'',\bh''}$ satisfy all the conditions to belong to $\mathscr{H}_{q''}^{M}$ with $q''=q+q'-1$.  Moreover, it is clear that
$$
\mathfrak m_{Q^{(1)}}\le\mathfrak m_Q+\mathfrak m_{Q'}-1,\quad
\mathfrak n_{Q^{(1)}}\le\mathfrak n_Q+\mathfrak n_{Q'},\ \quad
\mathfrak h_{Q^{(1)}}\le\max\{\mathfrak h_Q,\mathfrak h_{Q'}\}. 
$$
The crucial point is to estimate $\| Q^{(1)} \|_{\lof}$. Indeed, we have
\begin{equation*}
\begin{split}
\| Q^{(1)} \|_{\lof} &= \sum_{m'' \geq 0}\sup_{\bj''\in\mathcal R_{m''}} \sum_{\bh'' \in \bN}|Q^{(1)}_{\bj'',\bh''}| \\
&\leq \sum_{m'' \geq 0}\sup_{\bj''\in\mathcal R_{m''}} \sum_{\substack{m+m'-1=m''\\ \bj'' \in \mathcal{I}_{m,m'}}}  \sum_{\substack{1\leq \alpha \leq 2m \\ 1\leq \alpha' \leq 2m'\\ (\bj,\bj')=(\Psi_{\alpha,\alpha'}^{m,m'})^{-1}(\bj'')}}
\sum_{\bh,\bh' \in \bN}  |Q_{\bj,\bh} | | Q_{\bj',\bh'}| \\
&\leq\sum_{m'' \geq 0} \sum_{m+m'-1 = m''} (2m)(2m')\left( \sup_{\bj \in \mathcal{R}_m}  \sum_{\bh \in \bN}  |Q_{\bj,\bh} | \right) \left( \sup_{\bj' \in \mathcal{R}_{m'}}  \sum_{\bh' \in \bN}  |Q_{\bj',\bh'} | \right) \\
&\leq  4\mathfrak{m}_Q \mathfrak{m}_{Q'} \|Q\|_{\lof} \|Q'\|_{\lof}.
\end{split}
\end{equation*}

\noindent \underline{$\vartriangleright$\textsf{Estimation of $Q^{(2)}$ and $Q^{(3)}$.}}  By symmetry, without loss of generality, we focus only on $Q^{(2)}$ : 
\begin{equation*}
\begin{split}
Q^{(2)}(z) &= \sum_{\bj\in\mathcal R} \sum_{\bj'\in\mathcal R} z_{\bj} f_{\bj'}^{Q'}(z) \{ f_{\bj}^Q(z) ,  z_{\bj'} \} \\
&= \sum_{\bj\in\mathcal R} \sum_{\bj'\in\mathcal R} \sum_{\alpha' =1}^{\# \bj'} (-\mi \delta(\bj'_{\alpha'}))  f_{\bj'}^{Q'}(z) \left( \partial_{I_{\bj'_{\# \bj'}}}  f_{\bj}^Q(z) \right) z_{\bj} z_{\bj'} .
\end{split}
\end{equation*}
Moreover, we have
$$
\partial_{I_{\bj'_{\alpha'}}}  f_{\bj}^Q(z) =   -  \sum_{\bh \in \bN} Q_{\bj,\bh} \left( \prod_{\alpha = 1}^{\# \bh} \frac{\mi}{\omega_{\bh_{\alpha}}^M(z)} \right) \left(\sum_{\beta=1}^{\# \bh} \frac{\partial_{I_{\bj'_{\alpha'}}}\omega_{\bh_{\beta}}^M(z) }{\omega_{\bh_{\beta}}^M(z)} \right).
$$
Note that since $\omega_{\bh_{\alpha}}^M$ is a linear form on the actions, $\partial_{I_{\bj'_{\alpha'}}} \omega_{\bh_{\alpha}}^M(z) $ is a constant (i.e. it does not depend on $z$). Therefore, setting
$$
Q^{(2)}_{\bj'',\bh''} =- \sum_{\substack{\bj,\bj' \in \mathcal{M} \\ \bj'' = (\bj,\bj')}} \sum_{\alpha' =1}^{\# \bj'}  \delta(\bj'_{\alpha'})  \sum_{\bh \in \bN} \sum_{\beta = 1}^{\# \bh} \sum_{\substack{\bh' \in \bN\\ \bh'' = (\bh,\bh',\bh_\beta)}}Q'_{\bj',\bh'} Q_{\bj',\bh'} \partial_{I_{\bj'_{\alpha'}}}\omega_{\bh_{\beta}}^M(z)  
$$
we have
$$
Q^{(2)}(z) = \sum_{\bj'' \in \mathcal{R}} \left( \sum_{\bh'' \in \bN} Q^{(2)}_{\bj'',\bh''}   \prod_{\alpha = 1}^{\# \bh''} \frac{\mi}{\omega_{\bh_{\alpha}''}^M(z)} \right) z_{\bj''}.
$$
As previously, thanks to this explicit expression, it can be easily checked that the coefficients of $Q^{(2)}$ satisfy all the conditions to belong to $\mathscr{H}_{q''}^{M}$ with $q+q'-1$.  Moreover, it is clear that
$$
\mathfrak m_{Q^{(2)}}\le\mathfrak m_Q+\mathfrak m_{Q'},\quad
\mathfrak n_{Q^{(2)}}\le\mathfrak n_Q+\mathfrak n_{Q'}+1,\ \quad
\mathfrak h_{Q^{(2)}}\le\max\{\mathfrak h_Q,\mathfrak h_{Q'}\}. 
$$
It just remains to estimate $\| Q^{(2)} \|_{\lof}$. Indeed, we have
\begin{equation*}
\begin{split}
\| Q^{(2)} \|_{\lof} &= \sum_{m'' \geq 0}\sup_{\bj''\in\mathcal R_{m''}} \sum_{\bh'' \in \bN}|Q^{(2)}_{\bj'',\bh''}| \\
&\leq \sum_{m'' \geq 0}\sup_{\bj''\in\mathcal R_{m''}} \sum_{\substack{m+m' = m'' \\ \bj'' = (\bj,\bj') \\ \# \bj =2m}}  \sum_{\bh,\bh' \in \bN}  \sum_{\alpha' =1}^{\# \bj'}  |Q_{\bj,\bh} | | Q_{\bj',\bh'}| \sum_{\beta=1}^{\# \bh} |\partial_{I_{\bj'_{\alpha'}}}\omega_{\bh_{\beta}}^M| .
\end{split}
\end{equation*}
Thanks to Lemma \ref{partialomega}, we use the estimate $|\partial_{I_{\bj'_{\alpha'}}}\omega_{\bh_{\beta}}^M| \leq \# \bh_{\beta} \leq \mathfrak{h}_Q $ and according to Remark \ref{rem:ark}, $\# \bh \leq \mathfrak{n}_Q \leq 2\mathfrak{m}_Q$, to get (and to conclude as previously) that
\begin{equation*}
\begin{split}
\| Q^{(2)} \|_{\lof} &\leq (2\mathfrak{m}_Q)  (2\mathfrak{m}_{Q'}) \mathfrak{h}_Q \sum_{m'' \geq 0}\sup_{\bj''\in\mathcal R_{m''}} \sum_{\substack{m+m' = m'' \\ \bj'' = (\bj,\bj') \\ \# \bj =2m}}  \sum_{\bh,\bh' \in \bN}     |Q_{\bj,\bh} | | Q_{\bj',\bh'}| \\
&\leq  4\mathfrak{m}_Q \mathfrak{m}_{Q'}  \mathfrak{h}_Q \|Q\|_{\lof} \|Q'\|_{\lof}.
\end{split}
\end{equation*}
\end{proof}

\begin{lemma}[Rational local flow] \label{Rflow}
Let $q\ge2$, $\gamma \in (0,1)$, $M\geq 1$, $\mathcal S \in\mathscr H_{q}^M$ and define
\[
\varepsilon_2:=\frac14\left(\frac{\mathfrak m_{\mathcal S}^2(1+2\mathfrak h_{\mathcal S})^2}{\gamma^{\mathfrak n_{\mathcal S}+2}}||\mathcal S||_{\lof}\right)^{-\frac1{2(q-1)}}.
\]
There is a smooth map $\Phi_{\mathcal S}:[-1,1]\times\left(\bg(0,\varepsilon_2)\cap\mathfrak U_{\gamma}^{\mathfrak{h}_{\mathcal S},M}\right)\mapsto \bg(0,2\varepsilon_2)\cap\mathfrak U_{\gamma/2}^{\mathfrak{h}_{\mathcal S},M}$ such that
\begin{equation}\label{PhiS2}
	\begin{cases}
\partial_t\Phi_{\mathcal S}^t(z)=\X_{\mathcal S}(\Phi_{\mathcal S}^t(z)),\\
\Phi_{\mathcal S}^0(z)=z.
\end{cases}
\end{equation}
Moreover, given $z\in\bg(0,\varepsilon_2)\cap\mathfrak U_\gamma^{\mathfrak{h}_{\mathcal S},M}$ and $t\in[-1,1]$, one has
\\
(i) $\Phi_{\mathcal S}^t$ is symplectic.\\
(ii) $\Phi_{\mathcal S}^t$ is locally invertible:
\[
\Phi_{\mathcal S}^{-t}\circ\Phi_{\mathcal S}^t(z)=z,\quad\text{if}\quad\Phi_{\mathcal S}^t(z)\in\bg(0,\varepsilon_2)\cap\mathfrak U_\gamma^{\mathfrak{h}_{\mathcal S},M}.
\]
(iii) $\Phi_{\mathcal S}^t$ is close to identity:
\[
||\Phi_{\mathcal S}^t(z)-z||_{\sis}\le \frac{\mathfrak m_{\mathcal S}(1+\mathfrak h_{\mathcal S})2^{2q}}{\gamma^{\mathfrak n_{\mathcal S}+1}}||\mathcal S||_{\lof}||z||^{2q-1}_{\sis}  \le \frac{2^{-2(q-1)}}{1+2\mathfrak h_{\mathcal S}}\gamma \| z\|_{\sis}.
\]
(iv) $\Phi_{\mathcal S}^t$ is locally Lipschitz:
\[
||\md\Phi_{\mathcal S}^t(z)(w)||_{\sis}\le2||w||_{\sis}.
\]
\end{lemma}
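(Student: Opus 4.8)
The plan is to mirror the proof of Lemma~\ref{flow}, the only genuinely new feature being that $\X_{\mathcal S}$ is defined and locally Lipschitz only on the non-resonant open set $\mathfrak U_{\gamma'}^{\mathfrak h_{\mathcal S},M}$ (for every $\gamma'\in(0,1)$, by Lemma~\ref{Rvectorfield}), so along the flow one must control not only the Gevrey norm of the solution but also the size of the small divisors $\omega_{\bj}^M$. By the Cauchy--Lipschitz theorem there is a unique maximal solution $y\in\mathcal C^1([0,T);\mathcal G)$ of \eqref{PhiS2} with $y(0)=z\in\bg(0,\varepsilon_2)\cap\mathfrak U_\gamma^{\mathfrak h_{\mathcal S},M}$ (we treat $t\ge0$, negative times being symmetric), and it suffices to prove that it survives up to time $\min(T,1)$ while staying in $\bg(0,2\varepsilon_2)\cap\mathfrak U_{\gamma/2}^{\mathfrak h_{\mathcal S},M}$, which then forces $T>1$ and identifies $\Phi_{\mathcal S}^t(z):=y(t)$.

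To achieve this I would run a \emph{double bootstrap}. With $E_0=[0,T)\cap[0,1]$, set
\[
E_1=\Big\{t\in E_0 \ \big|\ \forall\tau\in[0,t],\ \|y(\tau)\|_\sis\le2\|z\|_\sis \ \text{ and }\ y(\tau)\in\overline{\mathfrak U_{\gamma/2}^{\mathfrak h_{\mathcal S},M}}\Big\},
\]
which is non-empty, closed in $E_0$ and connected by continuity of $y$. For openness, fix $t\in E_1$: on $[0,t]$ we have $y(\tau)\in\mathfrak U_{\gamma/2}^{\mathfrak h_{\mathcal S},M}$ and $\|y(\tau)\|_\sis\le2\|z\|_\sis<2\varepsilon_2$, so the vector-field estimate of Lemma~\ref{Rvectorfield} applied with the parameter $\gamma/2$, combined with the definition of $\varepsilon_2$, yields after integration the close-to-identity bound
\[
\|y(t)-z\|_\sis\le\int_0^t\|\X_{\mathcal S}(y(\tau))\|_\sis\,\md\tau\le\frac{2^{-2(q-1)}}{1+2\mathfrak h_{\mathcal S}}\,\gamma\,\|z\|_\sis<\|z\|_\sis\,,
\]
i.e. statement~(iii), and in particular $\|y(t)\|_\sis<2\|z\|_\sis$ with strict inequality. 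For the divisors, each $\bj\in\mathcal N^{\mathfrak h_{\mathcal S},M}$ satisfies $\#\bj\le\mathfrak h_{\mathcal S}$, so Remark~\ref{rem:lip} together with the bound above gives
\[
|\omega_{\bj}^M(y(t))-\omega_{\bj}^M(z)|\le\#\bj\,\max(\|y(t)\|_\sis,\|z\|_\sis)\,\|y(t)-z\|_\sis\,,
\]
and the point --- which is exactly why $\varepsilon_2$ carries the factor $\mathfrak m_{\mathcal S}^2(1+2\mathfrak h_{\mathcal S})^2\gamma^{-(\mathfrak n_{\mathcal S}+2)}$ --- is that this right-hand side is a fixed fraction of $\gamma\|z\|_\sis^2$ lying strictly below $\tfrac12\gamma\|z\|_\sis^2$ (the compensation $\#\bj\le\mathfrak h_{\mathcal S}$ against $(1+2\mathfrak h_{\mathcal S})^{-1}$ in~(iii) making the loss harmless); since $z\in\mathfrak U_\gamma^{\mathfrak h_{\mathcal S},M}$, this forces $|\omega_{\bj}^M(y(t))|>\tfrac\gamma2\|y(t)\|_\sis^2$, i.e. $y(t)\in\mathfrak U_{\gamma/2}^{\mathfrak h_{\mathcal S},M}$, again strictly. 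Both strict inequalities persisting in a neighbourhood of $t$, the set $E_1$ is open, hence $E_1=E_0$, which gives the existence statement.

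The remaining properties transfer almost verbatim from Lemma~\ref{flow}. Property~(i) is automatic since $\Phi_{\mathcal S}^t$ is the flow of the Hamiltonian vector field $\X_{\mathcal S}$. Property~(ii) follows from uniqueness in Cauchy--Lipschitz ($\X_{\mathcal S}$ being locally Lipschitz on $\mathfrak U_\gamma^{\mathfrak h_{\mathcal S},M}$) via the usual time-reversal argument, valid as long as $z$ and $\Phi_{\mathcal S}^t(z)$ both remain in $\bg(0,\varepsilon_2)\cap\mathfrak U_\gamma^{\mathfrak h_{\mathcal S},M}$. Property~(iii) is the displayed integral estimate (the two bounds in the statement being its crude and its useful form). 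For property~(iv) I would differentiate \eqref{PhiS2} to obtain $\partial_t\md\Phi_{\mathcal S}^t(z)(w)=\md_y\X_{\mathcal S}(y)\big(\md\Phi_{\mathcal S}^t(z)(w)\big)$ with $y=\Phi_{\mathcal S}^t(z)$, bound $\|\md_y\X_{\mathcal S}(y)(\cdot)\|_\sis$ by $\tfrac12\|\cdot\|_\sis$ using the second estimate of Lemma~\ref{Rvectorfield} with parameter $\gamma/2$ and the definition of $\varepsilon_2$ (since $\|y\|_\sis<2\varepsilon_2$), and conclude by Grönwall's inequality that $\|\md\Phi_{\mathcal S}^t(z)(w)\|_\sis\le\sqrt e\,\|w\|_\sis\le2\|w\|_\sis$.

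The main obstacle is precisely the second half of the bootstrap: unlike in the polynomial Lemma~\ref{flow}, one cannot merely control the norm but must keep the solution inside a fixed non-resonant layer, so the heart of the argument is the quantitative check that the drift of the divisors $\omega_{\bj}^M$ over a unit time interval --- estimated through Remark~\ref{rem:lip} by $\#\bj\,\|z\|_\sis\,\|y-z\|_\sis\lesssim\mathfrak h_{\mathcal S}\,\|z\|_\sis\,\|y-z\|_\sis$ --- stays below $\tfrac\gamma2\|z\|_\sis^2$; securing this balance is what dictates the precise powers of $\mathfrak m_{\mathcal S}$, $\mathfrak h_{\mathcal S}$ and $\gamma$ in the threshold $\varepsilon_2$, after which everything reduces to a routine transcription of the polynomial case.
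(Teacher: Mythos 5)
Your proposal is correct and follows essentially the same route as the paper: a double bootstrap on the set of times where both $\|y(\tau)\|_\sis\le 2\|z\|_\sis$ and $y(\tau)\in\overline{\mathfrak U_{\gamma/2}^{\mathfrak h_{\mathcal S},M}}$, with openness obtained from the vector-field estimate of Lemma~\ref{Rvectorfield} (giving (iii)) and the Lipschitz control of the small divisors from Lemma~\ref{lipschitzomega}, the properties (i), (ii), (iv) then transferring verbatim from the polynomial case. No substantive difference from the paper's argument.
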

\begin{proof}
Mimicking the proof of Lemma \ref{flow} we consider the maximal solution $y\in C^1([0,T);\mathcal{G}_M)$ to the Cauchy problem 
\[
\begin{cases}
	\dot y=\X_{\mathcal S}(y),\\
	y(0)=z\in\bg(0,\varepsilon_2)\cap\mathfrak U_{\gamma}^{\mathfrak{h}_{\mathcal S},M},
\end{cases}
\]
we set $E_0=[0,T)\cap[0,1]$ and 
\[
E_2=\Big\{t\in E_0\,|\,\forall~\tau\in[0,t],\,||y(\tau)||_\sis\le 2||z||_\sis \quad \mathrm{and} \quad y(\tau) \in \overline{\mathfrak U_{\gamma/2}^{\mathfrak{h}_{\mathcal S},M} }\Big\}.
\]
Clearly, it is non-empty, connected and closed in $E_0$ by continuity. 
First, invoking Lemma \ref{Rvectorfield}, one has
\stepcounter{equation}
\begin{align*}
	||y(t)-z||_\sis&\le\int_0^t||\X_{\mathcal S}(y(\tau))||_\sis\,\md\tau\le\int_0^1 2\mathfrak m_{\mathcal S}(1+\mathfrak h_{\mathcal S})\gamma^{-(\mathfrak n_{\mathcal S}+1)}||\mathcal S||_\lof||y(\tau)||_\sis^{2q-1}\,\md\tau\\
	&\le\mathfrak m_{\mathcal S}(1+\mathfrak h_{\mathcal S})2^{2q}\gamma^{-(\mathfrak n_{\mathcal S}+1)}||\mathcal S||_\lof||z||_\sis^{2q-1}\tag{\theequation}\label{close2q}\\
	\stepcounter{equation}
	&\le\frac{4\gamma}{\mathfrak m_{\mathcal S}(1+2\mathfrak h_{\mathcal S})}\left(\frac{2||z||_\sis}{4\varepsilon_2}\right)^{2(q-1)}||z||_\sis
	\le\frac{2^{-2(q-1)}}{1+2\mathfrak h_{\mathcal S}}\gamma||z||_\sis<||z||_\sis.\tag{\theequation}\label{closeId}
\end{align*}
Thanks to Lemma \ref{lipschitzomega} we have, for each $\bj  \in \mathcal{N}$ satisfying  $\# \bj \leq \mathfrak{h}_{\mathcal{S}}$ and $\mu_1( \bj)\leq M$,
\begin{align*}
	|\omega_{\bj}^M(y)-\omega_{\bj}^M(z)|&\le \# \bj \max\big\{||y||_\sis,||z||_\sis\big\}||y-z||_\sis\le 2  \mathfrak{h}_{\mathcal{S}} ||z||_\sis||y-z||_\sis\\
	&\le\frac14\gamma||z||_\sis^2\le\frac14|\omega_{\bj}^M(z)|.
\end{align*}
It follows that $|\omega_{\bj}^M(y) | \geq (3/4) |\omega_{\bj}^M(z)|$.
Together with \eqref{closeId}, it implies that $E_2$ is open, and so, since it is connected that $E_2 = E_0$ (and a fortiori that $T>1$). 

Since $T>1$, we have checked the existence of the flow $\Phi_{\mathcal S}$. It is symplectic because the associated vector field is Hamiltonian and (ii) is an application property of the group property of the flow. Furthermore, estimate \eqref{close2q} implies the property (iii). The proof the estimate (iv) is the same as in the polynomial case (see proof of Lemma \ref{flow}).
\end{proof}

\section{Rational normal form}

\label{sec:rational2}

Given $r\geq 1$, we denote by $H^{(2r)}$ a resonant normal form of order $2r$ of the Schr\"odinger--Poisson Hamiltonian. More precisely, we set
$$
H^{(2r)} := L_2+ L_4+\sum_{m=3}^{r}K_{2m}
$$
where $K_{6},\cdots, K_{2r}$ are given by Proposition \ref{Bnormalform}.

Then, given $M\geq 1$, we define $H^{(2r,M)} : \mathcal{G}_M \to \mathbb{R}$ as the restriction of $H^{(2r)}$ to $\mathcal{G}_M$, that is
\begin{equation}
\label{eq:def_H2rM}
H^{(2r,M)} = H^{(2r)}_{| \mathcal{G}_M}.
\end{equation}
Similarly, we define $L_2^{(M)}$ (resp. $L_4^{(M)}$, resp. $K_{2m}^{(M)}$) as the restriction of $L_2$ (resp. $L_4$, resp. $K_{2m}$) to $\mathcal{G}_M$. Note that, by construction $K_{2m}^{(M)} \in \mathscr{H}_{2m}^M$ and that it satisfies $\mathfrak{n}_{K_{2m}^{(M)}} = \mathfrak{h}_{K_{2m}^{(M)}}=0$, $\mathfrak{m}_{K_{2m}^{(M)}} = m$ and
\begin{equation}
\label{eq:what_we_did_before}
\| K_{2m}^{(M)}\|_{\lof} \le C^{2m-3} m^{2(m-2)}
\end{equation}
where $C>0$ is a universal constant (i.e. independent of $r$ and $M$).

\begin{definition} A rational Hamiltonian $L \in \mathscr H_{q}^M$ is said \emph{integrable} and we denote it by  $L \in \mathscr H_{q}^{M,\mathrm{Int}}$ if it depends only on the actions, i.e. 
$$
\forall \bj \in \mathcal{N},\forall \bh \in \bN, \quad L_{\bj,\bh} = 0. 
$$

\end{definition}

\begin{theorem}
\label{Rnormalform} Let $r\ge 3$, $M\ge 3$, $\gamma \in (0,1)$, and $H^{(2r,M)}$ be the polynomial defined by \eqref{eq:def_H2rM}. There exists a universal constant $\sfc>1$ (independent of $r,M,\gamma$) such that setting
\begin{equation}
\label{eq:varrho}
\varrho=\gamma^\frac32 \sfc^{-1}  r^{-4}
\end{equation}
there exist two $C^\infty$ symplectic maps $\varphi_{(0)}$ and $\varphi_{(1)}$ making the following diagram to commute
\begin{equation}
\label{diag2}
\xymatrixcolsep{5pc} \xymatrix{ \mathbb{B}_M(0, \varrho )\cap \mathfrak{U}_{\gamma}^{6r,M}  \ar[r]^{ \varphi_{(0)} }
 \ar@/_1pc/[rr]_{ \mathrm{id}_{\mathcal{G}_M} } & \mathbb{B}_M(0, 2\varrho )\cap \mathfrak{U}_{\gamma/2}^{6r,M}   \ar[r]^{ \hspace{1cm}  \varphi_{(1)}}  & 
\mathcal{G}_M}
\end{equation}
such that $H^{(2r,M)}\circ\varphi_{(1)}$ admits the following decomposition on $\mathbb{B}_M(0,\varrho) \cap \mathfrak{U}_{\gamma}^{6r,M}$
\begin{equation}\label{decomp2}
	H^{(2r,M)}\circ\varphi_{(1)}=L_2^{(M)}+L_4^{(M)}+\sum_{q=3}^{r}L_{2q}^{(M)}+ \Upsilon
\end{equation}
where $L_{2q}^{(M)}\in H_{q}^{M,\mathrm{Int}}$ is an integrable Hamiltonian of order $2q$ 
and $\Upsilon:\mathbb{B}_M(0, 2\varrho )\cap \mathfrak{U}_{\gamma/2}^{6r,M} \mapsto\mathbb R$ is a smooth function which is a remainder term of order $2r+2$, i.e.
\begin{gather*}
||\X_{\Upsilon}(z)||_{\sis}\le \sfc^{r}  r^{10r}   \gamma^{- 2r + 3} ||z ||_{\sis}^{2r+1}.
\end{gather*}
Furthermore, the maps $\varphi_{(\iota)}:\bg(0,\varrho+\iota\varrho)\cap  \mathfrak{U}_{\gamma - \frac{\iota}2 \gamma}^{6r,M}\mapsto\mathcal G_M$, $\iota=0,1$, are close to the identity
\begin{equation}
\label{eq:phi-id}
||\varphi_{(\iota)}(z)-z||_{\sis}\le \sfc \gamma^{-2} ||z||_{\sis}^3
\end{equation}
and their differential are not too large: for all $w\in \mathcal{G}_M$, we have
\begin{equation}
\label{eq:Rnf-d}
\|\mathrm{d}\varphi_{(\iota)}(z)(w) \|_{\sis} \le 2^{r -2} \| w \|_{\sis}\,.
\end{equation}
\end{theorem}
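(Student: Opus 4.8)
The plan is to follow the scheme of the proof of Proposition~\ref{Bnormalform}, with two changes: the r\^ole of the linear part $L_2$ is now played by the \emph{integrable but nonlinear} term $L_4$, whose flow modulates the frequencies, and the homological transformations are generated by the rational fractions of Section~\ref{sec:rational1} rather than by polynomials. The elementary but crucial fact is that, $L_4$ depending only on the actions, for every $\bj\in\mathcal R$ one has $\{L_4,z_{\bj}\}=\mi\,\omega_{\bj}^M(z)\,z_{\bj}$ (and $\{L_4,g\}=0$ for $g$ a function of the actions, in particular for $1/\omega_{\bh}^M$), while $\{L_2,z_{\bj}\}=0$ for $\bj\in\mathcal R$ since $\Delta_{\bj}=0$ there. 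Hence a resonant, non-integrable monomial ($\bj\in\mathcal N=\mathcal R\setminus\mathrm{Int}$) can be removed by dividing its (rational) coefficient by $\mi\,\omega_{\bj}^M(z)$ — a licit operation precisely on $\mathfrak U_{\gamma}^{6r,M}$ — while resonant integrable monomials cannot and must not be touched; they are collected into the $L_{2q}^{(M)}$. Since $K_4=L_4$ is already integrable (as shown at the end of the proof of Proposition~\ref{Bnormalform}), only the degrees $2q$, $3\le q\le r$, need to be normalized.

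I would argue by induction, normalizing the terms of order $q=3,\dots,r$ one after the other, the base case $q=3$ being $H^{(2r,M)}$ itself. At the step treating order $q$, the Hamiltonian reads
$$
H^{(2r,M)}\circ\varphi_{(1)}=L_2^{(M)}+L_4^{(M)}+\sum_{q'=3}^{q-1}L_{2q'}^{(M)}+\sum_{m=q}^{r}Z_{2m}+R,
$$
with $L_{2q'}^{(M)}\in\mathscr H_{q'}^{M,\mathrm{Int}}$, $Z_{2m}\in\mathscr H_{m}^{M}$ and $\|Z_{2m}\|_{\lof},\mathfrak m_{Z_{2m}},\mathfrak n_{Z_{2m}},\mathfrak h_{Z_{2m}}$ under control; one splits $Z_{2q}=Z_{2q}^{\mathrm{Int}}+Z_{2q}^{\mathcal N}$ according to $\bj\in\mathrm{Int}$ or $\bj\in\mathcal N$, sets $L_{2q}^{(M)}:=Z_{2q}^{\mathrm{Int}}$, and solves the homological equation $\{L_4,\mathcal S_q\}+Z_{2q}^{\mathcal N}=0$ by
$$
\mathcal S_q=\sum_{\bj\in\mathcal N}\frac{\mi}{\omega_{\bj}^M(z)}\,\big(\text{coefficient of }z_{\bj}\text{ in }Z_{2q}^{\mathcal N}\big)\,z_{\bj}\in\mathscr H_{q-1}^{M},
$$
which satisfies $\|\mathcal S_q\|_{\lof}=\|Z_{2q}^{\mathcal N}\|_{\lof}\le\|Z_{2q}\|_{\lof}$, $\mathfrak m_{\mathcal S_q}=\mathfrak m_{Z_{2q}}$, $\mathfrak n_{\mathcal S_q}\le\mathfrak n_{Z_{2q}}+1$, $\mathfrak h_{\mathcal S_q}\le\max(\mathfrak h_{Z_{2q}},2\mathfrak m_{Z_{2q}})$. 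One then conjugates by the time-one rational flow $\Phi_{\mathcal S_q}^1$ of Lemma~\ref{Rflow}, whose domain and close-to-identity estimates (iii)--(iv) tell us that it maps $\mathbb B_M(0,\varrho_{q})\cap\mathfrak U_{\gamma_{q}}^{6r,M}$ into $\mathbb B_M(0,\varrho_{q-1})\cap\mathfrak U_{\gamma_{q-1}}^{6r,M}$ provided $\varrho_q$ stays below the corresponding $\varepsilon_2$; since the losses in radius and in $\gamma$ are of size $\lesssim 2^{-2(q-1)}$ by Lemma~\ref{Rflow}(iii), they are summable over the $r-2$ steps, so that one may keep a single slowly decreasing sequence $\gamma_q\ge\gamma/2$ and a fixed radius $\varrho=\gamma^{3/2}\sfc^{-1}r^{-4}$, which one checks is admissible throughout.

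The rest is the bookkeeping of the Lie--Taylor expansion, carried out exactly as in~\eqref{expansion}--\eqref{Rexpansion}: from $\tfrac{\md}{\md t}K\circ\Phi_{\mathcal S_q}^t=\ad_{\mathcal S_q}(K)\circ\Phi_{\mathcal S_q}^t$ one expands $H\circ\varphi_{(1)}\circ\Phi_{\mathcal S_q}^1$; the degree-$2q$ component is $Z_{2q}+\{L_4,\mathcal S_q\}=Z_{2q}^{\mathrm{Int}}=L_{2q}^{(M)}$ (integrable, as wanted, using $\{L_2,\mathcal S_q\}=0$ so that $L_2$ stays inert), the lower degrees are untouched, and the higher degrees together with the new remainder are finite sums of $\tfrac1{k!}\ad_{\mathcal S_q}^k(Z_{2n})$ and of $\int_0^1\tfrac{(1-t)^{k^\ast}}{k^\ast!}\ad_{\mathcal S_q}^{k^\ast+1}(Z_{2n})\circ\Phi_{\mathcal S_q}^t\,\md t$, estimated by Lemma~\ref{Rpoissonbracket} (for the $\lof$-norm and the growth of $\mathfrak m,\mathfrak n,\mathfrak h$) and Lemma~\ref{Rvectorfield} (for the vector field of the remainder on the smaller domain, where the factor $\gamma^{-\mathfrak n_Q-1}$ produces the $\gamma^{-2r+3}$ of the statement). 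Finally \eqref{eq:phi-id} and \eqref{eq:Rnf-d} follow by composing the $r-2$ maps $\Phi_{\mathcal S_3}^1,\dots,\Phi_{\mathcal S_r}^1$: each displacement is $\lesssim\gamma^{-2}\|z\|_{\sigma}^{2q-1}$ and geometrically small in $q$, summing to $\sfc\gamma^{-2}\|z\|_{\sigma}^3$, while each differential is bounded by $2$ (Lemma~\ref{Rflow}(iv)), giving the factor $2^{r-2}$; and the decomposition \eqref{decomp2} is propagated to the larger set $\mathbb B_M(0,2\varrho)\cap\mathfrak U_{\gamma/2}^{6r,M}$ exactly as the diagram \eqref{diag1} of Proposition~\ref{Bnormalform}.

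The main obstacle is purely quantitative: tracking the dependence on $r$ uniformly through the $r-2$ iterations. One has to ensure that (i) the complexity $\mathfrak h$ and the number $\mathfrak n$ of small divisors do not grow out of control — they stay $\le 6r$, resp.\ $O(r)$, because the order condition $2q=\#\bj-2\#\bh$ of Definition~\ref{def:rat:fract} caps the numerator degrees and hence the number of denominators created — so that the accumulated powers of $\gamma^{-1}$ never exceed $\gamma^{-2r+3}$; (ii) the combinatorial weights produced by the Lie series, namely the $1/k!$ competing with the $m^{2(m-2)}$-type growth of $\|K_{2m}\|_{\lof}$ from~\eqref{eq:what_we_did_before} and with the $\mathfrak m,\mathfrak n$-prefactors of Lemmas~\ref{Rvectorfield} and~\ref{Rpoissonbracket}, recombine through a Stirling-type estimate (as in the proof of Proposition~\ref{Bnormalform}) into the clean bound $\sfc^{r}r^{10r}\gamma^{-2r+3}$ for $\X_{\Upsilon}$; and (iii) the nested domains of~\eqref{diag2} really close up with only a \emph{global} factor $2$ lost on $\gamma$ and with the radius $\varrho$ of~\eqref{eq:varrho}. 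This is precisely the point at which the present argument must refine \cite{KillBill}, where the order $r$ was fixed from the outset and none of these dependences mattered.
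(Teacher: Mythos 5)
Your proposal follows the same route as the paper's proof: an induction over the order being normalized, the homological equation $\{L_4,\mathcal S\}+Z_{2q}^{\mathcal N}=0$ solved by dividing by $\omega_{\bj}^M$ on $\mathfrak U_{\gamma}^{6r,M}$ (keeping the integrable part as $L_{2q}^{(M)}$), nested domains with geometrically summable losses in the radius and in $\gamma$, the Lie--Taylor expansion estimated via Lemmas~\ref{Rvectorfield} and~\ref{Rpoissonbracket} with a Stirling-type recombination, and the composition bounds giving \eqref{eq:phi-id} and \eqref{eq:Rnf-d}. This matches the paper's argument in both structure and the quantitative points you single out ($\mathfrak h\le 6r$, $\mathfrak n=O(r)$, the $\gamma^{-2r+3}$ loss), so it is correct and essentially identical to the paper's proof.
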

\begin{proof} We split the proof in five steps. 

\medskip

\noindent \underline{$\vartriangleright$\textsf{(1) Induction.}}  We are going to prove by induction on $\mr\in \llbracket 1,r-1 \rrbracket$ that there exists a universal constant $\sfc > 1$ such that setting
\[
\varrho=\frac{\gamma^{3/2}}{2^{9}\sfc^2 \mr^3 r},
\]
there exist two $C^\infty$ symplectic maps $\varphi_{(0)}$ and $\varphi_{(1)}$ making the diagram 
\begin{equation}
\label{diag3}
\xymatrixcolsep{5pc} \xymatrix{ \mathbb{B}_M(0, \varrho )\cap \mathfrak{U}_{\nu_{\mr} \gamma}^{6r,M}  \ar[r]^{ \varphi_{(0)} }
 \ar@/_1pc/[rr]_{ \mathrm{id}_{\mathcal{G}_M} } & \mathbb{B}_M(0, 2\varrho )\cap \mathfrak{U}_{\nu_{\mr}\gamma/2}^{6r,M}   \ar[r]^{ \hspace{1cm}  \varphi_{(1)}}  & 
\mathcal{G}_M}
\end{equation}
 to commute, where\footnote{Note that $\nu_{r-1} = 1$.} 
 $$
\nu_{\mr} = 1-  \sum_{i=\mr}^{r-2} 2^{-2i} > 1/2
 $$
 and such that $H^{(2r,M)}\circ\varphi_{(1)}$ admits the decomposition \eqref{decomp2} where $L_{2q}$ (i.e. $L_{2q}^{(M)}\in H_{q}^{M,\mathrm{Int}}$) is integrable for $q\le\mr+1$ and satisfies
\begin{equation}
\label{eq:fun_fun}
 \mathfrak{m}_{L_{2q}^{(M)}} \leq 3q - 6, \quad (1/2)\mathfrak{h}_{L_{2q}^{(M)}}  \leq  3\mr - 3 \quad \mathrm{and} \quad \mathfrak{n}_{L_{2q}^{(M)}} \leq 2q-6,
\end{equation}
and
\begin{equation}
\label{eq:L2qM}
||L_{2q}^{(M)}||_{\lof}\le \sfc^{4q-9}q^{2(q-2)}\min\{q,\mr\}^{4(q-3)},\quad\forall\,3\le q\le r,\\
\end{equation}
and the remainder $\Upsilon:\mathbb{B}_M(0, 2\varrho )\cap \mathfrak{U}_{\nu_\mr\gamma/2}^{6r,M} \mapsto\mathbb R$ is a smooth function satisfying 
$$
\| \X_{\Upsilon}(z) \|_{\sis} \le 4^{\mr-1} \sfc^{9r}  r^{10r}   \gamma^{- 2r + 3} ||z ||_{\sis}^{2r+1} \prod_{i= 1}^{\mr-1}  (1+2^{-2i})^{2r+1}.
$$
Furthermore, the map $\varphi_{(\iota)}:\bg(0,2^{\iota}\varrho)\cap  \mathfrak{U}_{2^{-\iota}\nu_{\mr} \gamma}^{6r,M}$ is close to the identity
\[
||\varphi_{(\iota)}(z)-z||_{\sis}\le2^{30} \gamma^{-2} \sfc^3 \| z\|_{\sis}^3 \sum_{i=1}^{\mr-1} 2^{-2i},\quad\iota=0,1,
\]
and their differential are not too large, that is, for all $w\in \mathcal{G}_M$, we have
$$
\|\mathrm{d}\varphi_{(\iota)}(z)(w) \|_{\sis} \le 2^{\mr -1} \| w \|_{\sis}.
$$

 First we note that the case $\mr=1$ is trivial. Indeed, in view of \eqref{eq:def_H2rM} and \eqref{eq:what_we_did_before}, it is enough to set $L_{2q}^{(M)}=K_{2q}^{(M)}$ and to ensure that $\sfc>0$ is large enough to have
 $$
 \forall q\ge 3, \quad C^{2q-3} q^{2(q-2)} \leq  \sfc^{4q-9}q^{2(q-2)} .
 $$

From now, we assume that the property holds at step $\mr<r-1$ and we are going to prove it at step $\mr+1$. To simplify notations, we denote with a superscript $\sharp$ the maps and indices corresponding to the subsequent step $\mr+1$, such as $L_{2q}^{(M),\sharp}$ and $\varphi_{(1)}^\sharp$.

\medskip

\noindent
\underline{$\vartriangleright$\textsf{(2) Resolution of homological equation.}} Now we are going to remove the non integrable terms of $L_{2(\mr+2)}^{(M)}$ by solving the following cohomological equation:
\begin{equation}\label{homoequ2}
\{L_4^{(M)},\mathcal S\}+L_{2(\mr+2)}^{(M)}=L_{2(\mr+2)}^{(M),\sharp} \in \mathscr{H}_{\mr +2}^{M,\mathrm{Int}}.
\end{equation}
We solve the above equation by setting
\[
L_{2(\mr+2)}^{(M),\sharp} =\sum_{\bj\in \mathrm{Int}} f_{\bj}^{L_{2(\mr+2)}^{(M)}} z_{\bj} 
\quad\text{and}\quad
\mathcal S=\sum_{ \bj\in \bN}\frac{\mi f_{\bj}^{L_{2(\mr+2)}^{(M)}}}{\omega_{ \bj}^M} z_{\bj}.
\]
or more precisely
$$
L_{2(\mr+2),\bj,\bh}^{(M),\sharp} := \mathbbm{1}_{\bj \in \mathrm{Int}} L_{2(\mr+2),\bj,\bh}^{(M)}
$$
and the non zero coefficients of $\mathcal{S}$ are of the form
$$
\mathcal{S}_{\bj,\bh} = L_{2(\mr+2),\bj,\bh}^{(M)} \quad \mathrm{with} \quad \bj \in \mathcal{N}.
$$ 

Obviously, by the induction hypothesis, we have the estimates
\begin{equation}\label{homoestimate2}
\max(||\mathcal S||_{\lof} , ||L_{2(\mr+2)}^{(M),\sharp}||_{\lof})\le||L_{2(\mr+2)}^{(M)}||_{\lof}\le \sfc^{4\mr-1}(\mr+2)^{2\mr}\mr^{4(\mr-1)}.
\end{equation}
Moreover, we have
$$
\mathfrak m_{\mathcal{S}} \leq \mathfrak m_{L_{2(\mr+2)}^{(M)}}, \quad \mathfrak h_{\mathcal{S}} \leq \max(2\mathfrak m_{\mathcal{S}} , \mathfrak h_{L_{2(\mr+2)}^{(M)}} ), \quad  \mathfrak n_{\mathcal{S}} \leq \mathfrak n_{L_{2(\mr+2)}^{(M)}} +1
$$
and so, by induction hypothesis, one has
\begin{equation}
\label{mnh}
\mathfrak m_{\mathcal{S}} \leq 3\mr, \quad \mathfrak h_{\mathcal{S}} \leq  6\mr, \quad  \mathfrak n_{\mathcal{S}} \leq 2\mr -1.
\end{equation}

\noindent
\underline{$\vartriangleright$\textsf{(3) The new variables.}} 
We recall that the Hamiltonian flow $\Phi_{\mathcal S}^t$, given by Lemma \ref{Rflow}, is well-defined for $|t|\le1$ on $\bg_M(0,\varepsilon_2)\cap\mathfrak U_{\gamma/4}^{\mathfrak{h}_{\mathcal{S}},M}$ where 
\stepcounter{equation}
\begin{align*}
\varepsilon_2&=\frac14\left(\frac{\mathfrak m^2_{\mathcal{S}}(1+2\mathfrak h_{\mathcal{S}})^2}{(\gamma/4)^{\mathfrak n_{\mathcal{S}}+2}}||\mathcal S||_\lof\right)^{-\frac1{2\mr}}\ge
\frac14(\gamma/4)^{1+\frac1{2\mr}}\big(3^2\times13^2\times2^{4\mr}\times \sfc^{4\mr-1}\mr^{6\mr}\big)^{-\frac1{2\mr}}\\
&\ge\frac{\gamma^\frac32}{2^6\sfc ^2\mr^3}\ge 2\varrho,\qquad\udl{4\times 3^2\times13^2}.\tag{\theequation}\label{radius2}
\end{align*}
Moreover, since $\Phi_{\mathcal S}^t$ is close to the identity, if $z\in \bg_M(0,2\varrho)\cap\mathfrak U_{\gamma/4}^{\mathfrak{h}_{\mathcal{S}},M}$ we have
\begin{equation}
 \label{eq:good1}
\begin{split}
||\Phi_{\mathcal S}^{t}(z)-z||_\sis&\le  \frac{\mathfrak m_{\mathcal S}(1+\mathfrak h_{\mathcal S})2^{2(\mr+2)}}{(\gamma/4)^{\mathfrak n_{\mathcal S}+1}}||\mathcal S||_{\lof}||z||_\sis^{2\mr+1}\\
&\le  3\mr (1+6\mr )2^{2(2\mr +2)}(\gamma/4)^{-2\mr}\sfc^{4\mr-1}(\mr+2)^{2\mr}\mr^{4(\mr-1)}(2\varrho)^{2(\mr-1)}||z||_\sis^3\\
&\le          2^{12\mr +9}       \gamma^{-2\mr}  \sfc^{4\mr-1}   \mr^{6\mr -2}           \big( \frac{\gamma^{3/2}}{2^{8}\sfc^2 \mr^3 r}  \big)^{2(\mr-1)}||z||_\sis^3\\
&\le      2^{-4\mr +25}     \gamma^{-2\mr}  \sfc^{4\mr-1} \mr^4  \mr^{6(\mr -1)}  \sfc^{-4(\mr-1)} \gamma^{3(\mr-1)} \mr^{-6(\mr-1) }     ||z||_\sis^3 \\
&\le 2^{-2\mr +29}  \gamma^{\mr - 3} \sfc^3  ||z||_\sis^3
\end{split}
\end{equation}
and so a fortiori
\begin{equation}
\label{eq:good2}
||\Phi_{\mathcal S}^{t}(z)-z||_\sis \le  2^{-2\mr +29}  \gamma^{\mr - 3} \sfc^3 (2\varrho)^2 ||z||_\sis \le \gamma 2^{-2\mr +13}   \sfc^{-1} \mr^{-6} r^{-2}  ||z||_\sis .
\end{equation}

At this step, we aim at proving that the new change of variables
\[
\varphi_{(1)}^\sharp:=\varphi_{(1)}\circ\Phi_{\mathcal S}^1,\quad\varphi_{(0)}^\sharp:=\Phi_{\mathcal S}^{-1}\circ\varphi_{(0)}.
\]
are well defined and enjoy the expected properties. 
We set 
$$
\varrho^\sharp =  \frac{\gamma^{3/2}}{2^{9} \sfc^2 (\mr+1)^3 r} = \left(\frac\mr{1+\mr}\right)^3\varrho.
$$

\noindent $\bullet$ First let us prove that $\varphi_{(1)}^\sharp$ is well defined, i.e. that $\Phi_{\mathcal S}^1$ maps $\mathbb{B}_M(0, 2\varrho^\sharp )\cap \mathfrak{U}_{\nu_{\mr+1} \gamma/2}^{6r,M}$ on $\mathbb{B}_M(0, 2\varrho )\cap \mathfrak{U}_{\nu_{\mr}\gamma/2}^{6r,M}$. Indeed, we have
\begin{equation*}
\begin{split}
\| \Phi_{\mathcal S}^1(z)\|_{\sis} &\le \| z \|_{\sis} + ||\Phi_{\mathcal S}^{t}(z)-z||_\sis 
 \le   \| z \|_{\sis} (1 +  \gamma 2^{-2\mr +13}   \sfc^{-1} \mr^{-6} r^{-2} ) \\
& \le \| z \|_{\sis} (   1 + \mr^{-6} ) \qquad\udl{2^{13}} \\
&\le 2\rho \left(\frac\mr{1+\mr}\right)^3 (   1 + \mr^{-6}  ) < 2\rho 
\end{split}
\end{equation*}
and, given $\bj \in \mathcal{N}$ such that $\# \bj \le 6r$ and $\mu_1(\bj) \le M$, by Lemma \ref{lipschitzomega} (and using that $1 + \mr^{-6} \leq 2$)
\begin{equation*}
\begin{split}
|\omega_{\bj}^M(\Phi_{\mathcal S}^1(z)) -\omega_{\bj}^M(z)| &\le (6r) (2 \|z\|_{\sis}) ( \gamma 2^{-2\mr +13}   \sfc^{-1} \mr^{-6} r^{-2}  ||z||_\sis ) \\
&\le 2^{-2\mr-1} \gamma \|z\|_{\sis}^2 \qquad\udl{12 \times 2^{14}}.
\end{split}
\end{equation*}
and so
$$
|\omega_{\bj}^M(\Phi_{\mathcal S}^1(z))|>(  \nu_{\mr+1}- 2^{-2\mr} ) (\gamma/2) \|z\|_{\sis}^2 =  (1-  \sum_{i=\mr+1}^{r-2} 2^{-i}  - 2^{-2\mr})(\gamma/2) \|z\|_{\sis}^2  =  \nu_{\mr}  (\gamma/2) \|z\|_{\sis}^2,
$$
that is $\Phi_{\mathcal S}^1(z) \in \mathfrak{U}_{\nu_{\mr}\gamma/2}^{6r,M}$.

\noindent $\bullet$  We have to prove that $\varphi_{(0)}^\sharp$ is well defined, i.e. that $\varphi_{(0)}$ maps $\mathbb{B}_M(0, \varrho^\sharp )\cap \mathfrak{U}_{\nu_{\mr+1} \gamma}^{6r,M}$ on $\mathbb{B}_M(0, 2\varepsilon_2 )\cap \mathfrak{U}_{\gamma/4}^{6r,M}$. This follows directly of the induction hypothesis because, as we have seen, $2\varepsilon_2 \geq 2 \varrho$ and $\varrho^{\sharp} < \varrho $.

\noindent $\bullet$ The facts that $\varphi_{(0)}^\sharp$ and $\varphi_{(1)}^\sharp$ are symplectic and the estimate on the differential  are obvious by composition.

\noindent $\bullet$ We have to prove that $\varphi_{(1)}^\sharp$ is close to the identity. Indeed, if $z\in \mathbb{B}_M(0, 2\varrho^\sharp )\cap \mathfrak{U}_{\nu_{\mr+1} \gamma/2}^{6r,M}$, by \eqref{eq:good2} and convexity, provided that $\sfc \ge 14 \times 2^{13}$, we have
$$
\| \Phi_{\mathcal S}^1(z) \|_{\sis}^3  \le (1+ \gamma 2^{-2\mr +13}   \sfc^{-1} \mr^{-6} r^{-2})^3  ||z||_\sis^3 \le (1+ 14^{-1} 2^{-2\mr} )^3 ||z||_\sis^3 \le (1+ 2^{-2\mr-1}) ||z||_\sis^3 .
$$
and so 
\begin{equation*}
\begin{split}
\| \varphi_{(1)}^\sharp(z) -z \|_{\sis} &\le \| \varphi_{(1)}( \Phi_{\mathcal S}^1(z)   ) - \Phi_{\mathcal S}^1(z) \|_{\sis} + \| \Phi_{\mathcal S}^1(z) - z \|_{\sis} \\
&\mathop{\le}^{\eqref{eq:good1}}  2^{30} \gamma^{-2} \sfc^3 \sum_{i=1}^{\mr-1} 2^{-2i}   \| \Phi_{\mathcal S}^1(z) \|_{\sis}^3 +2^{-2\mr +29}  \gamma^{\mr - 3} \sfc^3  ||z||_\sis^3 \\
&\le 2^{30} \gamma^{-2} \sfc^3 \| z\|_{\sis}^3 (\sum_{i=1}^{\mr-1} 2^{-2i}  + 2^{-2\mr-1} + 2^{-2\mr-1}) = 2^{30} \gamma^{-2} \sfc^3 \| z\|_{\sis}^3 \sum_{i=1}^{\mr} 2^{-2i}  .
\end{split}
\end{equation*}

\noindent $\bullet$ We have to prove that $\varphi_{(0)}^\sharp$ is close to the identity. Indeed, if $z\in \mathbb{B}_M(0, \varrho^\sharp )\cap \mathfrak{U}_{\nu_{\mr+1} \gamma}^{6r,M}$, we have
\begin{equation}
\label{eq:coucou}
\begin{split}
 \| \varphi_{(0)}(z) \|_{\sis} &\le (1+ \varrho^2  2^{30} \gamma^{-2} \sfc^3 \sum_{i=1}^{\mr-1} 2^{-2i}  ) \|z\|_{\sis} \\
 &\le (1+   2^{12} \gamma \sfc^{-1}  (1/3) r^{-2} ) \|z\|_{\sis}  \ge 2^{1/3}  \|z\|_{\sis}  ,\qquad\udl{2^{20}}
\end{split}
\end{equation}
and so as expected, using \eqref{eq:good1}, we have
\begin{equation}
\label{eq:lala}
\begin{split}
\| \varphi_{(0)}^\sharp(z) -z \|_{\sis} &\le \| \Phi_{\mathcal S}^{-1} ( \varphi_{(0)}(z)) -  \varphi_{(0)}(z) \|_{\sis} +  \| \varphi_{(0)}(z) - z\|_{\sis} \\
&\le 2^{-2\mr +29}  \gamma^{\mr - 3} \sfc^3 || \varphi_{(0)}(z) ||_\sis^3  +  2^{30} \gamma^{-2} \sfc^3 \| z\|_{\sis}^3 \sum_{i=1}^{\mr-1} 2^{-2i} \\
&\le  2^{30} \gamma^{-2} \sfc^3 \| z\|_{\sis}^3 \sum_{i=1}^{\mr} 2^{-2i} .
\end{split}
\end{equation}

\noindent $\bullet$ Finally, we have to prove that  $\varphi_{(0)}^\sharp$ maps $\mathbb{B}_M(0, \varrho^\sharp )\cap \mathfrak{U}_{\nu_{\mr +1} \gamma}^{6r,M}$ on $\mathbb{B}_M(0, 2\varrho^\sharp )\cap \mathfrak{U}_{\nu_{\mr +1}\gamma /2}^{6r,M}$. Note that it directly implies by construction that the diagram \eqref{diag3} commutes.
First, using that $\varphi_{(0)}^\sharp$ is close to the identity (see \eqref{eq:lala})  and proceeding as in \eqref{eq:coucou}, we get that $ \| \varphi_{(0)}^{\sharp}(z) \|_{\sis} \le 2^{1/3}  \|z\|_{\sis}$ and so that it maps $\mathbb{B}_M(0, \varrho^\sharp )\cap \mathfrak{U}_{\nu_{\mr +1} \gamma}^{6r,M}$ on $\mathbb{B}_M(0, 2\varrho^\sharp )$. Now let $z\in \mathbb{B}_M(0, \varrho^\sharp )\cap \mathfrak{U}_{\nu_{\mr +1} \gamma}^{6r,M}$ and $\bj \in \mathcal{N}$ such that $\# \bj \le 6r$ and $\mu_1(\bj) \le M$. By Lemma \ref{lipschitzomega}, and proceeding as in \eqref{eq:coucou}, we have
\begin{equation*}
\begin{split}
|\omega_{\bj}^M(\varphi_{(0)}^\sharp(z)) -\omega_{\bj}^M(z)| &\le (6r) (2 \|z\|_{\sis}) ( 2^{12} \gamma \sfc^{-1}  (1/3) r^{-2} \|z\|_{\sis}  ) \\
&\le   (\gamma /4) \|z\|_{\sis}^2 \qquad\udl{  2^{16}}.
\end{split}
\end{equation*}
and so (using that $ \nu_{\mr+1} > 1/2$)
$$
|\omega_{\bj}^M(\varphi_{(0)}^\sharp(z))|>(  \nu_{\mr+1} - \frac14 ) \gamma \|z\|_{\sis}^2 \ge \nu_{\mr+1}  (\gamma/2) \|z\|_{\sis}^2,
$$
that is $\varphi_{(0)}^\sharp(z)\in \mathfrak{U}_{\nu_{\mr +1}\gamma /2}^{6r,M}$.

\noindent
\underline{$\vartriangleright$\textsf{(4) The new expansion.}} Since $\Phi_{\mathcal S}^t$ is the Hamiltonian flow of $\mathcal S$, if $z \in \mathbb{B}_M(0, 2\varrho^\sharp )\cap \mathfrak{U}_{\nu_{\mr +1}\gamma /2}^{6r,M}$, we have
\[
\frac{\md}{dt}L_{2p}^{(M)}\circ\Phi_{\mathcal S}^t(z)=\{L_{2p}^{(M)},\mathcal S\}\circ\Phi_{\mathcal S}^t(z),\quad 1\le p\le r.
\]
Therefore, by Taylor expansion we obtain that 
\[
L_{2p}^{(M)}\circ\Phi_{\mathcal S}^1(z)=\sum_{k=0}^{k_p^*}\frac1{k!}\ad_{\mathcal S}^k(L_{2p}^{(M)})(z)+\int_0^1\frac{(1-t)^{k_p^*}}{k_p^*!}\ad_{\mathcal S}^{k_p^*+1}(L_{2p}^{(M)})\circ\Phi_{\mathcal S}^t(z)\md t,\quad2\le p\le r.
\]
Notice that $L_2^{(M)} \circ\Phi_{\mathcal S}^1=L_2^{(M)}$ since $\{L_2^{(M)},\mathcal S\}\equiv0$ and that $k_p^*$ denotes the largest integer such that $k\mr+p\le r$, i.e 
\begin{equation}\label{kp}
k_p^*\mr+p\le r\quad\text{and}\quad (k_p^*+1)\mr+p>r.
\end{equation}
Recalling that $\varphi_{(1)}^{\sharp} = \varphi_{(1)} \circ \Phi_{\mathcal S}^1$, we get
\begin{align*}
H^{(2r,M)}\circ\varphi_{(1)}^{\sharp}&=\sum_{p=1}^rL_{2p}^{(M)}\circ\Phi_{\mathcal S}^1+\Upsilon\circ\Phi_{\mathcal S}^1\\
&=L_2^{(M)}+\sum_{\substack{1\le k\le k_p^*\\2\le p\le r}}\frac1{k!}\ad_{\mathcal S}^k(L_{2p}^{(M)})+\sum_{2\le p\le r}\int_0^1\frac{(1-t)^{k_p^*}}{k_p^*!}\ad_{\mathcal S}^{k_p^*+1}(L_{2p}^{(M)})\circ\Phi_{\mathcal S}^t\,\md t+\Upsilon\circ\Phi_{\mathcal S}^1\\
&=:L_2^{(M)}+\sum_{q=2}^rL_{2q}^{(M),\sharp}+\Upsilon^\sharp,
\end{align*}
where
\begin{gather}
L_{2q}^{(M),\sharp}=\sum_{\substack{k\mr+p=q\\ k\ge0,p\ge2}}\frac1{k!}\ad_{\mathcal S}^k(L_{2p}^{(M)}),\quad 2\le q\le r,\label{L2q}\\
\Upsilon^{\sharp}=\Upsilon \circ\Phi_{\mathcal S}^1+\sum_{2\le p\le r}\int_0^1\frac{(1-t)^{k_p^*}}{k_p^*!}\ad_{\mathcal S}^{k_p^*+1}(L_{2p}^{(M)})\circ\Phi_{\mathcal S}^t\,\md t.\label{Ro}
\end{gather}
We will focus on the new remainder $\Upsilon^{\sharp}$ at the next step of the proof. For the moment we only focus on $L_{2q}^{(M),\sharp}$. As previously, we decompose
\begin{equation*}
L_{2q}^{(M),\sharp}=\sum_{\substack{k\mr+2=q\\ k\ge0}}\frac1{k!}\ad_{\mathcal S}^k(L_{4}^{(M)})+\sum_{\substack{k\mr+p=q\\ k\ge0,p\ge3}}\frac1{k!}\ad_{\mathcal S}^k(L_{2p}^{(M)}),
\end{equation*}
which implies that by the cohomological equation \eqref{homoequ2}
\begin{equation}\label{smallq}
\begin{gathered}
L_{2q}^{(M),\sharp}:=L_{2q}^{(M)},\quad\forall\,1\le q \le \mr+1,\\
L_{2(\mr+2)}^{(M),\sharp}=\ad_{\mathcal S}(L_4)+L_{2(\mr+2)}^{(M)}.
\end{gathered}
\end{equation}
Therefore, from now we only focus on $q > \mr+2$. Using that $\mathcal{S}$ solves the cohomological equation \eqref{homoequ2}, we get for  $q > \mr+2$
\begin{equation*}
L_{2q}^{(M),\sharp}=\sum_{\substack{k\mr+p=q\\ k\ge0,p\ge3}}\frac1{k!}\ad_{\mathcal S}^k(L_{2p}^{(M),q}),
\end{equation*}
where $L_{2p}^{(M),q} :=  L_{2p}^{(M)}$ if $p\neq \mr+2$ or $p = \mr+2$ and $\mr $ does not divide $q-\mr -2$, else if there exists an integer $k$ such that $ k\mr+\mr+2=q  $ then we  have set
\begin{equation}
\label{eq:very_nice}
L_{2(\mr+2)}^{(M),q} :=  \frac1{k+1} L_{2(\mr+2)}^{(M)} + \frac{k}{k+1}  L_{2(\mr+2)}^{(M),\sharp}.
\end{equation}
Note that by construction  $L_{p}^{(M),q} \in \mathscr{H}_p^M$, $\|L_{p}^{(M),q}\|_{\lof} \leq \|L_{p}^{(M)}\|_{\lof}$, $\mathfrak{m}_{L_{p}^{(M),q}} = \mathfrak{m}_{L_{p}^{(M)}}$, $\mathfrak{n}_{L_{p}^{(M),q}} = \mathfrak{n}_{L_{p}^{(M)}}$ and $\mathfrak{h}_{L_{p}^{(M),q}} = \mathfrak{h}_{L_{p}^{(M)}}$. 

The Hamiltonian $L_{2q}^{(M),\sharp}$ being defined as a sum of Poisson brackets of rational fractions, thanks to Lemma  \ref{Rpoissonbracket}, we note that $L_{2q}^{(M),\sharp}$ is also a rational fraction of order $2q$, that is  $L_{2q}^{(M),\sharp} \in \mathscr{H}_q^M$. Moreover, this lemma provides quantitative bounds. First, we have using \eqref{mnh} and the induction hypothesis
$$
\mathfrak{m}_{L_{q}^{(M),\sharp}} \leq \max_{k\mr+p=q}  k \mathfrak{m}_{\mathcal S} +  \mathfrak{m}_{L_{p}^{(M),q} } \leq \max_{k\mr+p=q}  3p - 6 + 3 k \mr = 3q-6. 
$$
$$
\mathfrak{n}_{L_{q}^{(M),\sharp}} \leq \max_{k\mr+p=q}  k (\mathfrak{n}_{\mathcal S} +1)+  \mathfrak{n}_{L_{p}^{(M),q} } \leq \max_{k\mr+p=q} 2k\mr + 2p - 6 = 2q-6. 
$$
$$
\mathfrak{h}_{L_{q}^{(M),\sharp}} \leq \max_{k\mr+p=q} \max( \mathfrak{h}_{L_{p}^{(M)}} ,  \mathfrak{h}_{\mathcal{S}} ) \leq 6\mr.
$$
Then we focus on estimating $||L_{2q}^{(M),\sharp}||_\lof$. Applying the triangular inequality, we get
\begin{equation}\label{simpleL2q}
||L_{2q}^{(M),\sharp}||_\lof\le ||L_{2q}^{(M)}||_\lof+\sum_{\substack{k\mr+p=q\\ k\ge1,p\ge3}}\frac1{k!}||\ad_{\mathcal S}^k(L_{2p}^{(M),q})||_\lof.
\end{equation}
Using \eqref{smallq}, to prove \eqref{eq:L2qM}, it remains to verify that for $\mr+2<q\le r$,
\begin{equation}\label{L2qEstimate}
||L_{2q}^{(M),\sharp}||_\lof\le \sfc^{4q-9}q^{2(q-2)}(\mr+1)^{4(q-3)},\quad\forall\,\mr+3\le q\le r.
\end{equation}
Firstly, we have by the induction hypothesis 
\begin{equation}\label{L2qEstimate1}
	\frac{||L_{2q}^{(M)}||_\lof}{\sfc^{4q-9}q^{2(q-2)}(\mr+1)^{4(q-3)}}\le\left(\frac{\mr}{\mr+1}\right)^{4(q-3)}\le\left(\frac{\mr}{\mr+1}\right)^{\mr+1}\le e^{-1}.
\end{equation}
Then, using the quantitative estimate given by Lemma \ref{Rpoissonbracket}, we have
\begin{align*}
&\sum_{\substack{k\mr+p=q\\ k\ge1,p\ge3}}\frac1{k!}||\ad_{\mathcal S}^k(L_{2p}^{(M)})||_\lof\\ \le&\sum_{\substack{k\mr+p=q\\ k\ge1,p\ge3}}\frac1{k!}||L_{2p}^{(M),q}||_\lof\left(4\mathfrak m_{\mathcal{S}}(1+2\max\{\mathfrak h_{\mathcal{S}},\mathfrak h_{L_{2p}^{(M),q}}\})||\mathcal S||_\lof\right)^k\prod_{i=0}^{k-1}(\mathfrak m_{L_{2p}^{(M),q}}+i\mathfrak m_{\mathcal{S}})\\
\le&\sum_{\substack{k\mr+p=q\\ k\ge1,p\ge3}}\frac1{k!}||L_{2p}||_\lof\left(12\mr(1+12\mr)||\mathcal S||_\lof\right)^k\prod_{i=0}^{k-1}3(p+i\mr)\\
\le&\sum_{\substack{k\mr+p=q\\ k\ge1,p\ge3}}\frac1{k!}\sfc^{4p-9}p^{2(p-2)}\mr^{4(p-3)}\left(12\times13\times\mr^2\sfc^{4\mr-1}(\mr+2)^{2\mr}\mr^{4(\mr-1)}\right)^k(3q)^{k}\\
\le&\sum_{\substack{k\mr+p=q\\ k\ge1,p\ge3}}\frac1{k!}(36\times13)^k\sfc^{4p+4k\mr-9-k}q^{2(p-2)}\mr^{4(p-3)}\left(q^{2\mr}\mr^{4\mr-2}\right)^kq^{k}\\
=&\sum_{\substack{k\mr+p=q\\ k\ge1,p\ge3}}\frac1{k!}\left(\frac{36\times13}{\sfc}\right)^k\sfc^{4q-9}q^{2(q-2)}\mr^{4(q-3)}q^{k}\mr^{-2k}
\le\sfc^{4q-9}q^{2(q-2)}(\mr+1)^{4(q-3)} \delta_{q,p,\mr,C}.
\end{align*}
where
$$
\delta_{q,p,\mr,C} := \sum_{k\ge1}\frac1{k!}\left(\frac{36\times13}{\sfc}\right)^kq^{k}\left(\frac{1+\mr}{\mr}\right)^{-4(q-3)}\mr^{-2k}
$$
So, to prove the expected bound \eqref{L2qEstimate}, thanks to \eqref{L2qEstimate1}, we only have to prove that $\delta_{q,p,\mr,C} \leq 1-e^{-1}$. Indeed, we have
\begin{align*}
\delta_{q,p,\mr,C} \le&\sum_{k\ge1}\frac{2^{12}}{k!}\left(\frac{36\times13}{\sfc}\right)^kq^{k}\left(1+\frac1\mr\right)^{-4q}\mr^{-2k}\\
\le&\sum_{k\ge1}2^{12}\left(\frac{36\times13}{\sfc}\right)^k\frac1{k!}\left(\frac ke\right)^k\left(\log\big[(1+\mr^{-1})^4\big]\right)^{-k}\mr^{-2k}\quad\underline{~\text{by Lemma \ref{mnelog}}~}\\
\le&\sum_{k\ge1}2^{12}\left(\frac{36\times13}{4\sfc}\right)^k(1+\mr)^k\mr^{-2k}
\le\sum_{k\ge1}2^{12}\left(\frac{36\times13}{2\sfc}\right)^k\\
\le&\frac{2^{12}\times13\times36}{\sfc}
\le\frac12,\quad\underline{~\text{provided }\sfc\ge2^{13}\times13\times36.~}
\end{align*}

\noindent
\underline{$\vartriangleright$\textsf{(5) The new remainder.}} Finally, we just have to estimate the new remainder term $\Upsilon^{\sharp}$ given by \eqref{Ro}. At this step we fix $z\in \mathbb{B}_M(0, 2\varrho^\sharp )\cap \mathfrak{U}_{\nu_{\mr +1}\gamma /2}^{6r,M} $. First, note that we can simplify a little bit the expression of  $\Upsilon^{\sharp}$ by removing the term of the sum associated with $p=2$ using the Hamiltonians $L_{2p}^{(M),q}$ (defined by \eqref{eq:very_nice}) :
$$
\Upsilon^{\sharp}=\Upsilon \circ\Phi_{\mathcal S}^1+\sum_{3\le p\le r}\int_0^1 (1-t)^{k_p^*} (k_p^*+1)  Q_{p}\circ \circ\Phi_{\mathcal S}^t\,\md t.
$$
where $q_p^* :=  (k_p^*+1)\mr+p \in \llbracket r+1,2r\rrbracket$ and $Q_{p} = ((k_p^*+1)!)^{-1}\ad_{\mathcal S}^{k_p^*+1}(L_{2p}^{(M),q_p^*})$. Note that applying Lemma \ref{Rpoissonbracket}, we have that
$Q_{p}  \in \mathscr{H}_{q_p^*}^M$
and that proceeding as we did at the previous step (i.e. the estimates are the same) we have that
$$
\| Q_{p} \|_{\lof} \le \sfc^{4q_p^*-9}(q_p^*)^{2(q_p^*-2)}(\mr+1)^{4(q_p^*-3)} \le \sfc^{8r - 9} 2^{4r - 4} r^{10r - 16}.
$$
and
$$
\mathfrak{m}_{Q_{p} } \leq 3q_p^*-6, \quad \mathfrak{n}_{Q_{p} } \leq 2q_p^*-6, \quad \mathfrak{h}_{Q_{p} } \leq 6\mr. 
$$
Then,  applying the triangular inequality and using that $\|\mathrm{d}\Phi_{\mathcal S}^1(z)(w) \|_{\sis} \le 2 \| w \|_{\sis}$ we get
\begin{equation*}
\begin{split}
\| \X_{\Upsilon^{\sharp}}(z)\|_{\sis} &\le \|\X_{\Upsilon \circ\Phi_{\mathcal S}^1}(z) \|_{\sis} +\sum_{3\le p\le r}  \sup_{0\leq t \leq 1} \| \X_{Q_{p} \circ\Phi_{\mathcal S}^t}(z) \|_{\sis} \\
&\le 2 \|\X_{\Upsilon} \circ\Phi_{\mathcal S}^1(z) \|_{\sis} +2\sum_{3\le p\le r}  \sup_{0\leq t \leq 1} \| \X_{Q_{p}} \circ\Phi_{\mathcal S}^t(z) \|_{\sis}.
\end{split}
\end{equation*}
Using the rough estimate $\| \Phi_{\mathcal S}^t(z)  \|_{\sis} \le 2 \|z\|_{\sis}$, Lemma \ref{Rvectorfield} and  $\|z\|_{\sis}\le 2 \varrho^\sharp <\gamma $ (and that $\nu_{\mr +1}>1/2$), we get
\begin{equation*}
\begin{split}
\| \X_{\Upsilon^{\sharp}}(z)\|_{\sis} &\le 2 \|\X_{\Upsilon} \circ\Phi_{\mathcal S}^1(z) \|_{\sis}  +2\sum_{3\le p\le r}  \sup_{0\leq t \leq 1}  2\frac{\mathfrak m_{Q_p}(1+\mathfrak h_{Q_p})}{(\nu_{\mr +1}\gamma /2)^{\mathfrak n_{Q_p}+1}}||Q_p||_{\lof} || \Phi_{\mathcal S}^t(z) ||_{\sis}^{2q_p^*-1}\\
&\le      2 \|\X_{\Upsilon} \circ\Phi_{\mathcal S}^1(z) \|_{\sis}+   \sfc^{8r - 9} 2^{16r + 2} r^{10r - 14}  \sum_{3\le p\le r} \gamma^{- 2q_p^*+5} ||z ||_{\sis}^{2q_p^*-1} \\
&\le    2 \|\X_{\Upsilon} \circ\Phi_{\mathcal S}^1(z) \|_{\sis}+   \sfc^{9r}  r^{10r-14}   \gamma^{- 2r + 3} ||z ||_{\sis}^{2r+1} ,\quad\underline{~\text{provided }\sfc\ge 2^{16}}.
\end{split}
\end{equation*}
Finally, using the induction hypothesis and the sharper estimate $\| \Phi_{\mathcal S}^1(z)  \|_{\sis} \le (1+2^{-2\mr}) \|z\|_{\sis}$ (proved in \eqref{eq:good2}), we get
\begin{equation*}
\begin{split}
\| \X_{\Upsilon^{\sharp}}(z)\|_{\sis}  &\le    ( \sfc^{9r}  r^{10r-14}   \gamma^{- 2r + 3}) \big(  4^{\mr-1} 2 ||\Phi_{\mathcal S}^1(z) ||_{\sis}^{2r+1} \prod_{i= 1}^{\mr-1}  (1+2^{-2i})^{2r+1} +  ||z ||_{\sis}^{2r+1}\big) \\
 &\le    ( \sfc^{9r}  r^{10r-14}   \gamma^{- 2r + 3} ||z ||_{\sis}^{2r+1}) \big( 4^{\mr-1} 2 \prod_{i= 1}^{\mr}  (1+2^{-2i})^{2r+1} +  1\big) \\
 &\le 4^{\mr} \sfc^{9r}  r^{10r}   \gamma^{- 2r + 3} ||z ||_{\sis}^{2r+1} \prod_{i= 1}^{\mr}  (1+2^{-2i})^{2r+1}.
\end{split}
\end{equation*}
This concludes the proof of the quantitative rational normal form Theorem~\ref{Rnormalform}.
\end{proof}

\section{Proof of the main results}\label{sec:dyn}
In this section we prove the main results: Theorem \ref{thm:main}, Proposition \ref{prop:geom}, Proposition \ref{prop:meas} and Proposition \ref{prop:proba}. As you might expect, the proof of Theorem \ref{thm:main} will require the most effort. It is based on a bootstrap argument on $T_\epsilon$ the maximal existence time of the local solution initiated from an initial data of size $\epsilon$. As usual in a Birkhoff normal form procedure, we obtain that $T_\eps=O(\eps^r)$. Then our precise estimates obtained in the previous section allow us to optimize $r$ in term of $\eps$. Actually for simplicity of presentation, we prefer to give the optimized values of the various parameters as a function of $\eps$, and then check that they are compatible with our bootstrap.\\
So provided that $\varepsilon \leq e^{-1}$, we define the optimal normal form order
\[
r_\varepsilon :=  \lfloor\frac{\min(1,\sigma)\theta(1-\theta)}{500}\frac{\log \varepsilon^{-1}}{\log\log \varepsilon^{-1}} \rfloor
\]
and the optimal truncation parameters
\[
M_\varepsilon = (\log \varepsilon^{-1})^{1+\frac{4}{\theta}}, \quad N_\varepsilon=(\log \varepsilon^{-1})^{\frac{2}{\theta}}, \quad L_\varepsilon:=6r_\eps N_\varepsilon^{2}\,.
\]
Then we define the optimal set of good initial data
\begin{equation}
\label{eq:big_set}
\Theta_\varepsilon := \Pi_{L_\varepsilon}^{-1}\mathfrak{V}_{10\delta_\varepsilon}^{6r_\varepsilon,L_\varepsilon}
\end{equation}
where 
$$
\delta_{\epsilon} = \epsilon^{2}\gamma_\varepsilon\,,\quad \gamma_{\epsilon}=\epsilon^{1/2}
$$
and
\[
\mathfrak{V}_\delta^{r,M}:=\{ z \in \mathcal{G}_M \ |\ \underset{\bj\in \mathcal{N}^{r,M}}{\min}  \ |\omega_{\bj}^M(z)|> \delta\}\ .
\]
We notice that this new non resonant set $\mathfrak{V}_\delta^{r,M}$ is related to the already defined set $\mathfrak{U}_{\gamma}^{r,M}$ (see \eqref{eq:U})  as follow: if $\delta = \eta^{2}\gamma$, then if $z\in\mathbb{B}(\eta)$, we have  
\[
\Pi_{M}z\in\mathfrak{V}_\delta^{r,M}\implies \Pi_{M}z\in\mathfrak{U}_{\gamma}^{r,M}\,.
\]

\subsection{Geometric aspects : proof of Proposition \ref{prop:geom}} First, we note that by its definition \eqref{eq:U} the set $\mathfrak{V}_\gamma^{r,M}$ is open and invariant by translation of the angles. 
To prove that it is a right cylinder of direction  $(\mathrm{Id}_\mathcal{G} - \Pi_{M_\varepsilon}) \mathcal{G} $, it is enough to note that by definition $\Theta_\varepsilon$ is a right cylinder of direction  $(\mathrm{Id}_\mathcal{G} - \Pi_{L_\varepsilon}) \mathcal{G} $ provided $\varepsilon$ is small enough so that
$M_\varepsilon = (\log \varepsilon^{-1})^{1+\frac{4}{\theta}} \geq L_\varepsilon$ and thus $(\mathrm{Id}_\mathcal{G} - \Pi_{M_\varepsilon}) \mathcal{G} \subset (\mathrm{Id}_\mathcal{G} - \Pi_{L_\varepsilon}) \mathcal{G} $.

\subsection{Dynamics : proof of Theorem \ref{thm:main}} We are going to prove that provided $\varepsilon$ is smaller than a constant depending only $(\sigma,\theta)$
then for all $z^{(0)}\in \Theta_{\epsilon}$ {of size $\| z^{(0)} \|_{\sigma} \leq \varepsilon$}
the local solution to~\eqref{sp} initiated from $z^{(0)}$ exists in $C([-T_{\epsilon},T_{\epsilon}],\mathcal{G})$, where  $T_{\epsilon}:= \varepsilon^{-\frac{r_\eps}3}$, and satisfies, for all $|t|\leq T_{\epsilon}$,
 \begin{equation}
\label{eq:dyn}
\|z(t)\|_{\sigma}\leq2 \| z(0) \|_{\sis}\,,\quad 2\sum_{a\in\mathbb{Z}}e^{\sigma|a|^{\theta}}|I_{a}(z(t))-I_{a}(z(0))|^{\frac{1}{2}}\leq\| z(0) \|_{\sis}^{\frac{3}{2}}\,.
\end{equation}
We only prove the result for the forward evolution $t\geq0$. So let $z^{(0)}\in\Theta_{\epsilon}$ and let $z$ be the corresponding local solution\footnote{Note that the existence and uniqueness of such a solution is a consequence of a local Cauchy theory for \eqref{sp} in $\mathcal{G}$ that we do not specify here. However, it is just a basic corollary of the multilinear estimates given by Lemma \ref{vectorfield}.}, defined on a maximal lifespan interval $[0,T_{\ast}]$ with $T_{\ast}>0$.

\medskip

\noindent $\bullet$\ {\bf Step 0: Preliminaries and statement of the bootstrap.} From now on, to avoid overloading the notations, we'll omit the index $\eps$ for the various parameters introduced above. Thus, $r=r_\eps$, $N=N_\eps$, $M=M_\eps$ and so on.\\
 First, for later use, we note that by assumption the parameters satisfy 
\begin{equation}\label{eq:parameters2}
e^{-\sigma(1-\theta)N^{\theta}}\leq \epsilon^{\frac{500}{\theta}r\log\log\epsilon^{-1}}\,,\quad M^{r}\leq(\log\epsilon^{-1})^{r}\epsilon^{-\frac{1}{125}\min(1,\sigma)(1-\theta)} \,,
\end{equation}
and in particular, when $\epsilon$ is small enough (depending on $\sigma, \theta$)
\begin{equation}
\label{eq:parameters3}
r^{r}\leq\epsilon^{-\frac{1}{250}}\,,\quad e^{-\sigma(1-\theta)N^{\theta}}\leq \epsilon^{500r}\,,\quad M^{r}\leq\epsilon^{-\frac{1}{108}}\,,\quad e^{6\sigma r}\leq \epsilon^{-\frac{1}{500}}\, .
\end{equation}
\begin{remark}\label{rem:resp}
We stress out that this previous estimates are crucial: they show that factors of order $r^r$ (that appears in the estimates of Theorem~\ref{Bnormalform} and Theorem~\ref{Rnormalform}) and even $M^r$ (that appears in the control of  the measure of the set of "good initial data" see Proposition~\ref{prop:measbis}) are actually not so big since they are controlled by a small power of $\eps^{-1}$. We also note that this fact will not be true with $r_\eps=c\log \eps^{-1}$, i.e. we really need to choose an optimal order smaller. It turns out that $r_\eps=c\frac{\log \eps^{-1}}{\log\log \eps^{-1}}$ is small enough and in particular we don't need to take $r_\eps=c(\log \eps^{-1})^\beta$ with $0<\beta<1$ as it is done in \cite{FG13,BMP20,LX23}).
\end{remark}

 Then, in order to apply the normal form Theorem~\ref{Bnormalform} (resp. Theorem~\ref{Rnormalform})  we shall make sure that $10\epsilon\leq\rho(r_{\epsilon})$ (resp. $10\epsilon\leq\varrho(r_{\epsilon})$), where $\rho$ (resp. $\varrho$) is defined in~\eqref{eq:rho} (resp.~\eqref{eq:varrho}). The latter constraint is stronger. Observe that 
$$
\varrho(r_{\epsilon}):=\gamma^{\frac{3}{2}}\sfc^{-1}r_{\epsilon}^{-4} \mathop{\gg}_{\varepsilon \to 0} \varepsilon^{\frac1{50}} \varepsilon^{\frac{3}{4}}  \mathop{\gg}_{\varepsilon \to 0} \varepsilon \,, 
$$
in which case, when $\epsilon$ is small enough we have 
\[
\mathbb{B}(0,10\epsilon)\subset \mathbb{B}(0,\varrho)\subset \mathbb{B}(0,\rho)\,. 
\]
Therefore, as we will see, under our choice of parameters the solution, in resonant and in rational coordinates, will always be in the set where Theorem~\ref{Bnormalform} and Theorem~\ref{Rnormalform} can be applied, with good estimates on the change of coordinates.

Let us now prove that $T_{\ast}\geq T_{\epsilon}$ (defined in \eqref{Teps}), together with the estimates~\eqref{eq:dyn}. We argue by contraction: set
\begin{equation}
\label{eq:boot-ass}
T := \sup\ \Big\{t\in[0,T_{\ast}]\ |\ \forall\,\tau\in[0,t],\,2\sum_{a\in\mathbb Z}e^{\sigma|a|^{\theta}}|I_{a}(z(\tau))-I_{a}(z(0))|^{\frac{1}{2}}\leq \|z(0)\|_{\sigma}^{\frac{3}{2}}\Big\}\,,
\end{equation}
 and assume that $T<T_{\epsilon}$. By a continuity argument, we have
\begin{equation}
\label{eq:contradiction}
2\sum_{a\in\mathbb Z}e^{\sigma|a|^{\theta}}|I_{a}(z(T))-I_{a}(z(0))|^{\frac{1}{2}}=\|z(0)\|_{\sigma}^{\frac{3}{2}}\,.
\end{equation}
Let us contradict~\eqref{eq:contradiction}. 

\medskip

\noindent $\bullet$\ {\bf Step 1: Consequences of the bootstrap assumption.}  It follows from the bootstrap assumption~\eqref{eq:boot-ass} that for all $t\in[0,T]$,
\begin{align}
\label{eq:boot-z}
\|z(t)\|_{\sigma}\leq \|z(0)\|_{\sigma}+\|z(0)\|_{\sigma}^{\frac{3}{2}}&\leq 2\|z(0)\|_{\sigma}\,,\\
\label{eq:boot-act}
2\sum_{a\in\mathbb Z}e^{2\sigma|a|^{\theta}}|I_{a}(z(t))-I_{a}(z(0))|&\leq\|z(0)\|_{\sigma}^{3}\,.
\end{align}
To obtain~\eqref{eq:boot-z} we use that for all $x,y\geq0$,
\begin{equation}
\label{eq:sqrt}
|x^{\frac{1}{2}}-y^{\frac{1}{2}}|\leq |x-y|^{\frac{1}{2}}\,,
\end{equation}
from which we deduce~\eqref{eq:boot-z}:
\begin{multline*}
\|z(t)\|_{\sigma}=2\sum_{a\in\mathbb Z}e^{\sigma|a|^{\theta}}|I_{a}(z(t))|^{\frac{1}{2}}\leq \|z(0)\|_{\sigma} +2\sum_{a\in\mathbb Z}e^{\sigma|a|^{\theta}}|I_{a}(z(t))^{\frac{1}{2}}-I_{a}(z(0))^{\frac{1}{2}}| \\
\leq\|z(0)\|_{\sigma} + 2\sum_{a\in\mathbb Z}e^{\sigma|a|^{\theta}}|I_{a}(z(t))-I_{a}(z(0))|^{\frac{1}{2}}\leq \|z(0)\|_{\sigma}+\|z(0)\|_{\sigma}^{\frac{3}{2}}\,.
\end{multline*}
The estimate~\eqref{eq:boot-act} follows from the embedding $\ell^{2}(\mathbb Z)\subset\ell^{1}(\mathbb Z)$:
\[
2\sum_{a\in\mathbb Z}e^{2\sigma|a|^{\theta}}|I_{a}(z(t))-I_{a}(z(0))|\leq 2\Big(\sum_{a\in\mathbb Z}e^{\sigma|a|^{\theta}}|I_{a}(z(t))-I_{a}(z(0))|^{\frac{1}{2}}\Big)^{2}\leq \|z(0)\|^{3}\,.
\]
In particular we have from \eqref{eq:boot-z} that $z(t)\in\mathbb{B}(0,2\epsilon)$.  
Moreover, we deduce from \eqref{eq:boot-act} and from the stability Lemma \ref{lipschitzomega} that $\Pi_{L}z(t)\in \mathfrak{V}_{3\delta}^{6r,L}$: given $\bj\in\mathcal{N}^{6r,L}$ we have 
\begin{align*}
|\omega_{\bj}(\Pi_{L}z(t))| &\geq  |\omega_{\bj}(\Pi_{L}z(0))| - |\omega_{\bj}(\Pi_{L}z(t))-\omega{\bj}(\Pi_{L}z(0))| \\
	&\geq 4\delta - 6r \sum_{|a|\leq L}|I_{a}(z(t))-I_{a}(z(0))| \\
	&\geq 4\delta - 6r\epsilon^{3}\geq 3\delta \ \text{ (use \eqref{eq:parameters3})}\,.
\end{align*}
Then, we apply the resonant normal form Theorem~\ref{Bnormalform} to $H$ the original Hamiltonian on $\mathbb{B}(10\epsilon)$, and denote $\phi_{(0)},\phi_{(1)}$ the change of coordinates (as in the statement of Theorem~\ref{Bnormalform}): 
\[
H\circ\phi_{(1)}=\underbrace{L_{2}+L_{4}+\sum_{m=3}^{r}K_{2m}}_{:=H_{\mathrm{res}}}+R\,,
\]
where each $K_{2m}$ is resonant and $R$ is the remainder. It follows from~\eqref{eq:boot-z} and~\eqref{eq:Bnf-close} that the local solution written in resonant coordinates satisfies, for $\eps$ small enough,
\begin{equation}
\label{eq:boot-u2}
u(t):=\phi_{(0)}(z(t))\in\mathbb{B}(0,3\epsilon)\,,
\end{equation} 
and that 
\begin{equation}
\label{eq:boot-u}
\|u(t)\|_{\sigma}\leq 3\|z(0)\|_{\sigma}\,.
\end{equation}
It is the solution to the Cauchy problem\footnote{This identity is classical and relies on the fact that the change of variable $\phi_{(0)}$ is symplectic. Nevertheless, to be proven rigorously, the solution should be approximated  by a smoother one in order to have the extra property that $z \in C^1([0,T]; \mathcal{G})$. This process is quite classical (and heavy) so we omit it (we refer for example to \cite{BG22} for a detailed proof). The key point is that if $\|\partial_x^2 z(0) \|_{\sigma}$ is finite then while $\|z(t) \|_{\sigma}$ is finite, we can prove (using tame estimates and the Gr\"onwall inequality) that $\|\partial_x^2 z(t) \|_{\sigma}$ is also finite. This is nothing but the classical property of the preservation of regularity for dispersive equations in the context of Gevrey spaces (the proof is the same).}
\[
i\partial_{t}u = \nabla H_{\mathrm{res}}(u)+\nabla R(u)\,,\quad u(0)=\phi_{(0)}(z(0))\,.
\]
Moreover, the stability Lemma~\ref{lipschitzomega} (more precisely Remark \ref{rem:lip}) combined with \eqref{eq:Bnf-close} and the fact that $\Pi_{L}z(t)\in \mathfrak{V}_{3\delta}^{6r,L}$ imply, again for $\eps$ small enough, that 
\[
\Pi_{L}u(t)\in\mathfrak{V}_{2\delta}^{6r,L}\,.
\]
We now split $u$ into its \emph{high} and its \emph{low} modes parts:
\[
u:= u_{\leq L}+u_{>L}\,,\quad u_{\leq L}:=\Pi_{L}u\,.
\]  
We control these two parts by performing a double bootstrap argument. The high-frequency (or high-mode) part is handled by the estimates obtained in Section~\ref{sec:high}. As for the low frequency (or low mode) part, we do a rational normal form as in Theorem \ref{Rnormalform} on $\mathbb{B}(0,10\epsilon)\cap\mathfrak{U}_{\gamma}^{6r,M}$, with $\gamma=\epsilon^{\frac{1}{2}}$, for the Hamiltonian
\[
H_{\mathrm{res}}^{(L)}:= L_{2}^{(L)}+L_{4}^{(L)}+\sum_{m=3}^{r}K_{2m}^{(L)}\,.\]
 Thus we obtain $\varphi_{(0)}$ and $\varphi_{(1)}$  such that 
\[
H_{\mathrm{res}}^{(L)}\circ\varphi_{(1)}= L_{2}^{(L)}+L_{4}^{(L)}+\sum_{m=3}^{r}L_{2m}^{(L)}+\Upsilon\,,
\]
where $L_{2m}^{(L)}$ is integrable and homogeneous of degree $2m$ and $\Upsilon$ is a remainder of order $2r+1$.  Set
\[
v(t):=\varphi_{(0)}(\Pi_Lu)\in \mathbb{B}(0,20\epsilon)\cap\mathfrak{U}_{\frac{\gamma}{2}}^{6r,L}\,.
\]
It follows from~\eqref{eq:phi-id} and from the a priori bound~\eqref{eq:boot-u} that we actually have for all $t\in[0,T]$,
\begin{equation}
\label{eq:boot-v0}
\|v(t)\|_{\sigma}\leq 4\|z(0)\|_{\sigma}\,.
\end{equation}
Moreover, by freezing the high frequencies of $u$ we see that $v$ solves the finite dimensional non-autonomous ODE
\begin{align}
\nonumber
i\partial_{t}v &= \nabla (H_{\mathrm{res}}^{(L)}\circ\varphi_{(1)})(v) + \mathrm{d}\varphi_{(0)}(u_{\leq L})\Big(\Pi_{L}\X_{H_{\mathrm{res}}^{(>L)}}(u)\Big) + \mathrm{d}\varphi_{(0)}(u_{\leq L})\Big(\Pi_{L}\X_{R}(u)\Big)\\
\label{eq:v}
&=: \nabla (H_{\mathrm{res}}^{(L)}\circ\varphi_{(1)})(v)+f(t)\,.
\end{align}
\\
In particular, we have from~\eqref{eq:Rnf-d} that, since $u\in \mathbb{B}(0,3\epsilon)$ under the bootstrap assumption,  
\begin{align*}
\|f(t)\|_{\sigma} &\leq \|\mathrm{d}\varphi_{(0)}(u_{\leq L})\|_{\sigma\to\sigma}\Big(\|\Pi_{L}\X_{H_{\mathrm{res}}^{(>L)}}(u)\|_{\sigma}+\|\Pi_{L}\X_{R}(u)\|_{\sigma}\Big)\\
&\leq 2^{r-2}\Big(\|\Pi_{L}\X_{H_{\mathrm{res}}^{(>L)}}(u)\|_{\sigma}+\|\Pi_{L}\X_{R}(u)\|_{\sigma}\Big)\\
&\leq 2^{r-2}\Big(2r^{2}\sfc^{2r-3}r^{2(r-2)}e^{-\sigma N^{\theta}}\|u(t)\|_{\sigma}^{5}+\sfc^{4r-1}r^{2r}\|u(t)\|_{\sigma}^{2r+1}\Big)\,,
\end{align*}
where the last inequality follows from Lemma~\ref{lem:mis} and \eqref{eq:b-bnf}, \eqref{XR1}. In particular, we deduce from \eqref{eq:boot-u} that under our choices of parameters \eqref{eq:parameters3} we have for all $t\in(0,T)$,
\begin{equation}
\label{eq:f}
\|f(t)\|_{\sigma}\leq \epsilon^{2r-2}\|z(0)\|_{\sigma}^{2}\,.
\end{equation}
We are now ready to perform the double bootstrap argument. 

\medskip

\noindent
$\bullet$ {\bf Step 2: Bootstrap estimates.} Let us deduce some a priori bounds from the bootstrap assumption~\eqref{eq:boot-ass}: if $T<T_{\ast}$ then for all $t\in(0,T]$ and $\epsilon\in(0,\epsilon_{\ast})$,
\begin{align}
\label{eq:boot-uhi}
2\sum_{|a|>L}e^{\sigma|a|^{\theta}}|I_{a}(u(t))-I_{a}(u(0))|^{\frac{1}{2}}\leq\epsilon^{\frac r6}\|z(0)\|_{\sigma}^{\frac{3}{2}}\,,\\
\label{eq:boot-v}
2\sum_{|a|\leq L}e^{\sigma|a|^{\theta}}|I_{a}(v(t))-I_{a}(v(0))|^{\frac{1}{2}}\leq\epsilon^{\frac{r}{20}}\|z(0)\|_{\sigma}^{\frac{3}{2}}\,.
\end{align}
These bounds provide a control on the long-time evolution of the high (resp. low) frequency components of the solution in the resonant (resp. rational) coordinates. 

\medskip

\noindent
$-$ {\it Bootstrap for the high-frequencies:} We show~\eqref{eq:boot-uhi}. Recall that under the bootstrap assumption, $u\in B(0,10\epsilon)$ and that
\[
H\circ\phi_{(1)}(u) = L_{2}(u)+L_{4}(u)+\sum_{m=3}^{r}K_{2m}(u)+R(u)\,,
\]
as in~\eqref{decomp1}, where each $K_{2m}\in\mathcal{H}_{m}^{\mathcal{R}}$ is a resonant homogeneous polynomial of degree $2m$, bounded by~\eqref{eq:b-bnf}. For $|a|>L$, we have
\[
\frac{d}{dt}I_{a}(u(t)) = \sum_{m=3}^{r}\{I_{a},K_{2m}\}(u(t)) + \{I_{a},R\}(u(t))\,.
\]

It follows from~\eqref{XR1} and from Cauchy-Schwarz inequality that
\begin{align*}
\sum_{|a|>M}e^{\sigma|a|^{\theta}}|\{I_{a},R\}(u(t))|^{\frac{1}{2}}&\leq 2\sum_{|a|>L}e^{\sigma|a|^{\theta}}|u_{a}(s)|^{\frac{1}{2}}|(\X_{R}(u))_{a}|^{\frac{1}{2}} \\
&\leq 2\|u_{>L}(s)\|_{\sigma}^{\frac{1}{2}}\|\X_{R}(u(s))\|_{\sigma}^{\frac{1}{2}} \leq 2\sfc^{2r}r^{r}\|u(t)\|_{\sigma}^{r+1}\,.
\end{align*}
Moreover, we have from Proposition~\ref{prop:highmodes} and form the bounds~\eqref{eq:b-bnf} that 
\[
\sum_{|a|>L}\sum_{m=3}^{r}e^{\sigma|a|^{\theta}}|\{I_{a},K_{2m}\}(u(t))|^{\frac{1}{2}} \leq 2r \sfc^{r-2}r^{r-2}e^{-\frac{1}{2}\sigma(1-\theta)N^{\theta}}\|u(t)\|_{\sigma}^{3}\,.
\]
We deduce from the bootstrap assumption and from \eqref{eq:parameters3} that for all $t\in[0,T]$, 
\begin{align*}
\sum_{|a|>L}e^{\sigma|a|^{\theta}}|\frac{d}{dt}I_{a}(u(t))|^{\frac{1}{2}}&\leq 2\sfc^{2r}r^{r}\|u(t)\|_{\sigma}^{r-\frac{1}{2}}\|u(t)\|_{\sigma}^{\frac{3}{2}}+2r\sfc^{r-2}r^{r-2}e^{-\frac{1}{2}\sigma(1-\theta)N^{\theta}}\|u(t)\|_{\sigma}^{3}\\
&\leq (2\sfc)^{2r}\epsilon^{-\frac{1}{50}+r-\frac{1}{2}}\|u(t)\|_{\sigma}^{\frac{3}{2}}+2r\sfc^{r-2}\epsilon^{-\frac{1}{50}+100r}2^{3}\|u(t)\|_{\sigma}^{3}\\
&\leq \epsilon^{\frac{r}{2}}\|z(0)\|_{\sigma}^{\frac{3}{2}}\,,
\end{align*}
assuming $\epsilon_{\ast}$ is sufficiently small with respect to the universal constant $\sfc$. Therefore, since we assumed that $T\leq T_{\epsilon}$, we obtain
\[
\sum_{|a|>L}e^{\sigma|a|^{\theta}}|I_{a}(u(t))-I_{a}(u(0))|^{\frac{1}{2}}\leq T\underset{s\in[0,T]}{\sup}\ \sum_{|a|>L}e^{\sigma|a|^{\theta}}|\frac{d}{dt}I_{a}(u(s))|^{\frac{1}{2}}\leq T_{\epsilon}\epsilon^{\frac{r}{2}}\|z(0)\|_{\sigma}^{\frac{3}{2}}\leq \epsilon^{\frac{r}{6}}\|z(0)\|_{\sigma}^{\frac{3}{2}}\,.
\]
This proves \eqref{eq:boot-uhi}.

\medskip

\noindent$-$ {\it Bootstrap for the low frequencies:} It remains to prove~\eqref{eq:boot-v}. Our estimate rely on the fact that $H_{\mathrm{res}}^{(L)}\circ\varphi_{(1)}$ is essentially integrable (up to the remainder $\Upsilon$). In light of~\eqref{eq:v}, for fixed $|a|\leq L$ we have 
\[
\frac{d}{dt}I_{a}(v(t))= \{ I_a, H_{\mathrm{res}}^{(L)}\circ\varphi_{(1)} \}(v(t)) + 2\im\Big(\overline{v}_{a}f_{a}(t)\Big)=2\im\Big(\overline{v_{a}}\partial_{\overline{v}_{a}}\Upsilon\Big)+2\im\Big(\overline{v}_{a}f_{a}(t)\Big)\,,
\]
so that, for $t\in(0,T)$,
\begin{equation}\label{ouf}
\sum_{a}e^{\sigma|a|^{\theta}}|\frac{d}{dt}I_{a}(v(s))|^{\frac{1}{2}}\leq 2\|v\|_{\sigma}^{\frac{1}{2}}\big(\|\X_{\Upsilon}(v(t))\|_{\sigma}^{\frac{1}{2}} + \|f(t)\|_{\sigma}^{\frac{1}{2}}\big)\,.
\end{equation}
We deduce from the bootstrap assumption that $v\in\mathfrak{U}_{\frac\gamma2}^{6r,L}\cap\mathbb{B}(0,4\epsilon)$ and $\mathbb{B}(0,4\epsilon)\subset \mathbb{B}(0,2\varrho)$. According to Theorem~\ref{Rnormalform} and to \eqref{eq:boot-v0} we obtain
\[
\|\X_{\Upsilon}(v(t))\|_{\sigma}\leq \sfc^{r}r^{10r}\gamma^{-2r+3}(4\|z(0)\|)^{2r+1}\leq\epsilon^{\frac{9r}{10}-2}\|z(0)\|_{\sigma}^{2}\,.
\] 

 Inserting this last estimate and \eqref{eq:f} in \eqref{ouf} we deduce that  for $t\leq T_\eps=\eps^{-r/3}$,
\begin{multline*}
\sum_{|a|\leq L}e^{\sigma|a|^{\theta}}|I_{a}(v(t))-I_{a}(v(0))|^{\frac{1}{2}}
	\leq t \underset{s\in[0,]}{\sup}\ \sum_{|a|\leq L}e^{\sigma|a|^{\theta}}|\frac{d}{dt}I_{a}(v(s))|^{\frac{1}{2}}\\
	\leq T_{\epsilon}(\epsilon^{\frac{9r}{20}}+\epsilon^{r})\epsilon^{-2}\|z(0)\|_{\sigma}^{\frac{3}{2}}
	\leq (\epsilon^{\frac{7r}{60}}+\epsilon^{\frac{2r}{3}})\epsilon^{-2}\|z(0)\|_{\sigma}^{\frac{3}{2}}\leq \frac{1}{2}\epsilon^{\frac{r}{10}-2}\|z(0)\|_{\sigma}^{\frac{3}{2}}\,.
\end{multline*}
This concludes the proof of~\eqref{eq:boot-v}. 
%
%
%

\medskip

\noindent
$\bullet$ {\bf Step 3: Raising the contradiction.} 
It follows from the triangle inequality and the from fact that for all $x,y\geq0$,
\[
(x+y)^{\frac{1}{2}}\leq x^{\frac{1}{2}}+y^{\frac{1}{2}}\,,
\]
that we have
\begin{multline*}
\sum_{a\in\mathbb Z}e^{\sigma|a|^{\theta}}|I_{a}(z(t))-I_{a}(z(0))|^{\frac{1}{2}}\leq \sum_{|a|\leq L}e^{\sigma|a|^{\theta}}\Big[|I_{a}(z(t))-I_{a}(u(t))|^{\frac{1}{2}}
\\
+|I_{a}(u(t))-I_{a}(v(t))|^{\frac{1}{2}}
+|I_{a}(v(t))-I_{a}(v(0))|^{\frac{1}{2}}
+|I_{a}(v(0))-I_{a}(u(0))|^{\frac{1}{2}}+|I_{a}(u(0))-I_{a}(z(0))|^{\frac{1}{2}}\Big] \\
+\sum_{|a|>L}e^{\sigma|a|^{\theta}}\Big[|I_{a}(z(t))-I_{a}(u(t))|^{\frac{1}{2}}+|I_{a}(u(t))-I_{a}(u(0))|^{\frac{1}{2}}+|I_{a}(u(0))-I_{a}(z(0))|^{\frac{1}{2}}\Big]\,.
\end{multline*}
The bootstrap estimate~\eqref{eq:boot-uhi} (resp.~\eqref{eq:boot-v}) provides a control  of $|I_{a}(u(t))-I_{a}(u(0))|$ (resp. $|I_{a}(v(t))-I_{a}(v(0))|$) when $|a|\leq L$ (resp. $|a|>L$),  for $0\leq t\leq T$. Therefore
\begin{multline*}
\sum_{a\in\mathbb Z}e^{\sigma|a|^{\theta}}|I_{a}(z(t))-I_{a}(z(0))|^{\frac{1}{2}}\leq  \sum_{|a|\leq L}e^{\sigma|a|^{\theta}}\Big[|I_{a}(u(t))-I_{a}(v(t))|^{\frac{1}{2}}+|I_{a}(u(0))-I_{a}(v(0))|^{\frac{1}{2}}\Big]\\
	(\epsilon^{\frac{r}{6}}+\epsilon^{\frac{r}{20}})\|z(0)\|_{\sigma}^{\frac{3}{2}}+ \sum_{a\in\mathbb Z}e^{\sigma|a|^{\theta}}\Big[|I_{a}(z(t))-I_{a}(u(t))|^{\frac{1}{2}}+|I_{a}(z(0))-I_{a}(u(0))|^{\frac{1}{2}}\Big]\,.
\end{multline*}
To control the remaining terms, observe that for functions $w, w'\in\mathcal{G}$,
\[
\sum_{a}e^{\sigma|a|^{\theta}}|I_{a}(w)-I_{a}(w')|^{\frac{1}{2}}\leq (\|w\|_{\sigma}+\|w'\|_{\sigma})^{\frac{1}{2}}\|w-w'\|_{\sigma}^{\frac{1}{2}}\,.
\]
Then, we use the estimates~\eqref{eq:Bnf-close} and~\eqref{eq:phi-id} on the different changes of coordinates to deduce that for all $t\in(0,T)$, 
\begin{align*}
 \sum_{|a|\leq L}e^{\sigma|a|^{\theta}}|I_{a}(u(t))-I_{a}(v(t))|^{\frac{1}{2}}
 	&\leq\underset{t}{\sup} (\|u(t)\|_{\sigma}+\|v(t)\|_{\sigma})^{\frac{1}{2}}\|u(t)-v(t)\|^{\frac{1}{2}} \leq \sfc^{\frac12} (4\|z(0)\|_{\sigma})^{2}\gamma^{-1}\,,\\
\sum_{a\in\mathbb{Z}}e^{\sigma|a|^{\theta}}|I_{a}(u(t))-I_{a}(z(t))|^{\frac{1}{2}}
 	&\leq\underset{t}{\sup} (\|u(t)\|_{\sigma}+\|z(t)\|_{\sigma})^{\frac{1}{2}}\|u(t)-z(t)\|^{\frac{1}{2}} \leq \sfc^{\frac12} (3\|z(0)\|_{\sigma})^{2}\,.
\end{align*}
Therefore, again for $\eps$ small enough (depending only on $\sigma,\theta$), we conclude 
\[
2\sum_{a\in\mathbb Z}e^{\sigma|a|^{\theta}}|I_{a}(z(t))-I_{a}(z(0))|^{\frac{1}{2}}\leq \Big(2\epsilon^{\frac{r}{6}}+2\epsilon^{\frac{r}{20}}+\sfc^{\frac12}4^{2}(\epsilon\gamma^{-1})^{\frac{1}{2}}+\sfc^{\frac12}3^{2}\epsilon^{\frac{1}{2}}\Big)\|z(0)\|_{\sigma}^{\frac{3}{2}}< \|z(0)\|_{\sigma}^{\frac{3}{2}}\,.
\]
This contradicts~\eqref{eq:contradiction}, and thus $T_*\geq T_\eps$. We conclude that \eqref{eq:dyn} holds true.

\subsection{Measure estimates: proof of Proposition \ref{prop:meas}}
In this section we prove the measure estimate of the non-resonant set of initial data, that is
\begin{equation}
\label{eq:meas2}
\mathrm{meas}(\Theta_\varepsilon \cap \mathbb{B}_{M_\varepsilon}(0,\varepsilon))\geq(1-\varepsilon^{\frac{1}{6}})\mathrm{meas}(\mathbb{B}_{M_\varepsilon}(0,\varepsilon))
\end{equation}
where $M_\varepsilon = (\log \varepsilon^{-1})^{1+\frac{4}{\theta}}$. By definition of $\Theta_\varepsilon$ (see \eqref{eq:big_set}),   it will be a direct corollary of the following proposition.  
\begin{proposition}[Measure estimate for non-resonant set] \label{prop:measbis} For $\epsilon>0$, $\kappa\in(0,1)$, $r\geq 3$, $M\geq L \geq 3r$, if
\begin{equation}
\label{eq:coucoula}
\delta \leq \kappa \varepsilon^2 (64M^2)^{-1}   (9r)^{-2r}L^{-3r} e^{-3 \sigma r } \,
\end{equation}
 then, we have
\[
\mathrm{meas}(\mathfrak{V}_{\delta}^{r,L} \cap\mathbb{B}_{M}(0,\varepsilon))\geq (1-\kappa)\, \mathrm{meas}(\mathbb{B}_{M}(0,\varepsilon))\,.
\]
\end{proposition}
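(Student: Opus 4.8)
The plan is to prove the quantitative Proposition~\ref{prop:measbis}; Proposition~\ref{prop:meas} then follows at once by taking $M=M_\varepsilon$, $L=L_\varepsilon$, $r=6r_\varepsilon$, $\kappa=\varepsilon^{1/6}$, $\delta=10\delta_\varepsilon$ and checking that the resulting constraint on $\delta$ is implied by our choice of parameters — this is exactly the point of \eqref{eq:parameters2}--\eqref{eq:parameters3}, namely that the seemingly enormous factors $r^r$, $L^{3r}$, $e^{3\sigma r}$ are controlled by arbitrarily small powers of $\varepsilon^{-1}$ because $r_\varepsilon$ grows only like $\log\varepsilon^{-1}/\log\log\varepsilon^{-1}$. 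For Proposition~\ref{prop:measbis} itself I would equip $\mathbb{B}_M(0,\varepsilon)$ with the normalized uniform probability measure $\mathbb{P}$ and start from the union bound
\[
\mathbb{P}\big(\mathcal{G}_M\setminus\mathfrak{V}_\delta^{r,L}\big)\leq\sum_{\bj\in\mathcal{N}^{r,L}}\mathbb{P}\big(|\omega_{\bj}^M(z)|\leq\delta\big),
\]
bounding the number of terms crudely by $\#\mathcal{N}^{r,L}\leq(9r)^rL^r$ (each $\bj$ has $\#\bj\leq r$ entries in $\mathbb{U}_2\times\llbracket-L,L\rrbracket$). As noted in the proof of Lemma~\ref{lipschitzomega}, $\omega_{\bj}^M$ is the linear form in the actions $\omega_{\bj}^M(z)=\sum_{|a|\leq L}c_a^{\bj}I_a(z)$ with $c_a^{\bj}=\sum_{\beta:\,a_\beta\neq a}\delta_\beta(a-a_\beta)^{-2}$ (only the modes with $|a|\leq\mu_1(\bj)\leq L$ enter), so everything reduces to producing, for each $\bj$, an index $a_0$ for which $|c_{a_0}^{\bj}|$ is not too small while $|a_0|$ is not too large.

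\emph{Arithmetic step.} Grouping by values, $c_a^{\bj}=\sum_v n_v(a-v)^{-2}$ with $n_v:=\sum_{\beta:\,a_\beta=v}\delta_\beta\in\mathbb{Z}$, and $\bj\notin\mathrm{Int}$ means precisely that some $n_v\neq0$, so the rational function $a\mapsto c_a^{\bj}$ is not identically zero. Multiplying by $\prod_{v:\,n_v\neq0}(a-v)^2$ gives a nonzero polynomial $P\in\mathbb{Z}[a]$ of degree $\leq2\#\bj-2\leq2r-2$; hence $c_a^{\bj}$ can vanish only at the $\leq\#\bj$ poles plus the $\leq2r-2$ zeros of $P$, i.e.\ at fewer than $3r$ integers, and there is $a_0\in\{0,1,\dots,3r\}$ with $c_{a_0}^{\bj}\neq0$ and $|a_0|\leq3r\leq L$. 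For such an $a_0$, $c_{a_0}^{\bj}$ is a nonzero rational with denominator dividing $\prod_\beta(a_0-a_\beta)^2\leq(|a_0|+L)^{2\#\bj}\leq(2L)^{2r}$, so $|c_{a_0}^{\bj}|\geq(2L)^{-2r}$.

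\emph{Measure step.} Fixing $\bj$ and this $a_0$, condition on all coordinates $(z_a)_{a\neq a_0}$: the conditional law of $z_{a_0}$ is uniform on the disk of radius $R:=\tfrac12 e^{-\sigma|a_0|^\theta}\big(\varepsilon-2\sum_{a\neq a_0}e^{\sigma|a|^\theta}|z_a|\big)$, so $I_{a_0}$ has conditional density $R^{-2}\mathbbm{1}_{[0,R^2]}$; since $\omega_{\bj}^M(z)=c_{a_0}^{\bj}I_{a_0}+(\text{indep.\ of }z_{a_0})$, the event $\{|\omega_{\bj}^M(z)|\leq\delta\}$ confines $I_{a_0}$ to an interval of length $2\delta/|c_{a_0}^{\bj}|$, giving the conditional bound $2\delta\,|c_{a_0}^{\bj}|^{-1}R^{-2}$. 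The delicate point — and the main obstacle of the whole argument — is that $\mathbb{B}_M(0,\varepsilon)$ is an $\ell^1$-type ball, not a product, so $R$ can degenerate; one must show that $\mathbb{E}[R^{-2}]$ is nonetheless finite and only loses a \emph{polynomial} factor in $M$ (an exponential loss would destroy the final optimization, since $e^{\sigma|a_0|^\theta}$ with $|a_0|$ comparable to $L$ is already fatal). After rescaling $w_a=2e^{\sigma|a|^\theta}z_a$ the ball becomes $\{\sum|w_a|\leq\varepsilon\}$, $R=\tfrac12 e^{-\sigma|a_0|^\theta}(\varepsilon-S)$ with $S:=\sum_{a\neq a_0}|w_a|$, and the marginal of $(w_a)_{a\neq a_0}$ has Lebesgue density $\propto(\varepsilon-S)^2$ on $\{S\leq\varepsilon\}$; using the homogeneity of $\{S\leq\varepsilon\}\subset\mathbb{C}^{2M}$ (so $S$ carries ``density'' $\propto s^{4M-1}$) and a Beta integral,
\[
\mathbb{E}\Big[\frac{1}{(\varepsilon-S)^2}\Big]=\frac{\int_0^\varepsilon s^{4M-1}\,ds}{\int_0^\varepsilon s^{4M-1}(\varepsilon-s)^2\,ds}=\frac{(4M+1)(4M+2)}{2\varepsilon^2}\leq\frac{16M^2}{\varepsilon^2},
\]
so that $\mathbb{E}[R^{-2}]\leq 64M^2\varepsilon^{-2}e^{2\sigma|a_0|^\theta}$. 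Bounding finally $e^{2\sigma|a_0|^\theta}\leq e^{2\sigma(3r)^\theta}\leq e^{3\sigma r}$ (valid for $r$ large, the regime of interest), one gets $\mathbb{P}(|\omega_{\bj}^M(z)|\leq\delta)\leq 128\,(2L)^{2r}M^2\varepsilon^{-2}e^{3\sigma r}\delta$.

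Putting the pieces together, $\mathbb{P}(\mathcal{G}_M\setminus\mathfrak{V}_\delta^{r,L})\leq(9r)^rL^r\cdot128\,(2L)^{2r}M^2\varepsilon^{-2}e^{3\sigma r}\delta\leq 64M^2(9r)^{2r}L^{3r}e^{3\sigma r}\varepsilon^{-2}\delta$, where the last step uses $2(36r)^r\leq(9r)^{2r}$ for $r\geq1$; under hypothesis~\eqref{eq:coucoula} the right-hand side is $\leq\kappa$, which gives $\mathrm{meas}(\mathfrak{V}_\delta^{r,L}\cap\mathbb{B}_M(0,\varepsilon))\geq(1-\kappa)\,\mathrm{meas}(\mathbb{B}_M(0,\varepsilon))$ and finishes the proof. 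I expect the measure step (the control of $\mathbb{E}[R^{-2}]$ on the $\ell^1$ ball) to be the real work; the arithmetic step is elementary once one observes that non-integrability forces a nonzero integer polynomial, and the reduction of Proposition~\ref{prop:meas} to Proposition~\ref{prop:measbis} is immediate from the definition \eqref{eq:big_set} of $\Theta_\varepsilon$.
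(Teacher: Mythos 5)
Your proof is correct in substance and follows the same strategy as the paper: a union bound over $\mathcal{N}^{r,L}$, the selection of a single low mode $a_0$ whose coefficient in the linear form $\omega_{\bj}^L(I)$ is bounded below, a fibration of the $\ell^1$-ball over that coordinate, and a Beta-integral comparison (your computation of $\mathbb{E}[R^{-2}]$ is exactly the paper's identity $\mathrm{meas}(\mathbb{B}_M(0,1))=\mathrm{meas}(\mathbb{B}_{M,a_*}(0,1))\,\tfrac{\pi}{2}e^{-2\sigma|a_*|^{\theta}}\int_0^1y(1-y)^{4M}\,\mathrm{d}y$ read in the other order of integration). The one genuine difference is your arithmetic step: the paper invokes Lemma~\ref{lem:mod-freq} (quoted from \cite{KillBill}) to produce $a_*\in(-3m,3m)$ with $\bigl|\sum_\alpha\delta_\alpha(a_*-a_\alpha)^{-2}\bigr|\ge(6m)^{-4m}\prod_\alpha\langle a_\alpha\rangle^{-2}$, whereas you give a self-contained argument (non-integrability forces a nonzero integer polynomial of degree $\le 2\#\bj-2$, hence at most $3\#\bj-2$ bad integers and a lower bound $(2L)^{-2r}$ by integrality of the numerator); this is a clean replacement that yields a comparable, even slightly better, bound. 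The only quantitative slip is the step $e^{2\sigma(3r)^{\theta}}\le e^{3\sigma r}$, which you flag yourself: it fails for $\theta$ close to $1$ and $r$ small (e.g.\ $r=3$, $\theta=0.99$), while the proposition is stated for all $r\ge 3$. It is easily repaired: since the zeros and poles number at most $3\#\bj-2\le 3r-2$, you may search $a_0$ among $3r-1$ consecutive integers centred at the origin, giving $|a_0|\le 3r/2$ and hence $2|a_0|^{\theta}\le 3r$ as in the paper (where $|a_*|\le 3m\le 3r/2$); alternatively, accepting $e^{6\sigma r}$ in place of $e^{3\sigma r}$ would still suffice for the application to Proposition~\ref{prop:meas} thanks to \eqref{eq:parameters3}.
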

In order to apply this proposition with $r= 6r_\varepsilon$, $\delta = 2\delta_\varepsilon=2  \varepsilon^{5/2}$, $\kappa = \varepsilon^{1/6}$, $M=M_\varepsilon$, $L=L_\varepsilon$, we just have to check \eqref{eq:coucoula} which, in view of \eqref{eq:parameters3}, is clear provided that $\varepsilon$ is small enough.

Now, just before starting to prove Proposition \ref{prop:measbis}, let us just mention the following property of the modulated frequencies (whose proof is given in Lemma 5.11 in~\cite{KillBill}) :
\begin{lemma}\label{lem:mod-freq} For all $m\geq2$ and $\bj=(\delta_{\alpha},a_{\alpha})_{\alpha=1}^{2m} \in \mathcal{J}_m \setminus \mathrm{Int}$ there exists $a_{\ast}\in(-3m,3m)\setminus\{a_{1},\cdots,a_{2m}\}$ such that 
\[
\Big|\sum_{\alpha=1}^{2m}\frac{\delta_{\alpha}}{(a_{\ast}-a_{\alpha})^{2}}\Big|\geq\frac{1}{(6m)^{4m}\prod_{\alpha=1}^{2m}\langle a_{\alpha}\rangle^{2}}\,.
\]
\end{lemma}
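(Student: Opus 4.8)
The plan is to reduce the estimate to an elementary statement about integer-valued polynomials. First I would group the factors of $\bj$ according to the value of their spatial index: for each $a\in\mathbb Z$ set $n_a:=\sum_{\alpha:\,a_\alpha=a}\delta_\alpha$, so that the rational function
$$
f(x):=\sum_{\alpha=1}^{2m}\frac{\delta_\alpha}{(x-a_\alpha)^2}=\sum_{a}\frac{n_a}{(x-a)^2}
$$
has only finitely many nonzero coefficients $n_a$. The key structural input is that $\bj\notin\mathrm{Int}$ forces $f\not\equiv0$: unravelling the definition of $\mathrm{Int}$, a permutation $\varphi$ with $\bj_{\varphi(\alpha)}=\overline{\bj_\alpha}$ exists if and only if, for every $a$, the number of indices $\alpha$ with $(\delta_\alpha,a_\alpha)=(1,a)$ equals the number with $(\delta_\alpha,a_\alpha)=(-1,a)$, i.e. if and only if $n_a=0$ for all $a$; and since the $(x-a)^{-2}$ are linearly independent (uniqueness of partial fractions), $f\equiv0$ is equivalent to all the $n_a$ vanishing.

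Next I would put $f$ over a common denominator. Let $A:=\{a:n_a\neq0\}$, which is nonempty with $|A|\le 2m$, and write $f=P/Q$ with $Q(x):=\prod_{a\in A}(x-a)^2$ and $P(x):=\sum_{a\in A}n_a\prod_{b\in A\setminus\{a\}}(x-b)^2$. Then $P\in\mathbb Z[x]$, $\deg P\le 2(|A|-1)\le 4m-2$, and $P\not\equiv0$, since $P(a)=n_a\prod_{b\neq a}(a-b)^2\neq0$ for $a\in A$. A counting argument now produces the point: the open interval $(-3m,3m)$ contains $6m-1$ integers, whereas the forbidden ones — the values $a_1,\dots,a_{2m}$ (at most $2m$ of them) together with the roots of $P$ (at most $4m-2$) — number at most $6m-2$. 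Hence there exists an integer $a_\ast\in(-3m,3m)$ with $a_\ast\notin\{a_1,\dots,a_{2m}\}$ and $P(a_\ast)\neq0$.

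For this $a_\ast$ I would conclude by a size estimate. Since $P$ has integer coefficients and $a_\ast\in\mathbb Z$ with $P(a_\ast)\neq0$, we have $|P(a_\ast)|\ge1$. For the denominator, any integer $a_\ast$ with $|a_\ast|\le 3m-1$ and any $a\in\mathbb Z$ satisfy $|a_\ast-a|\le 3m\langle a\rangle$ (check $a=0$ and $|a|\ge1$ separately), so
$$
Q(a_\ast)=\prod_{a\in A}(a_\ast-a)^2\le (3m)^{2|A|}\prod_{a\in A}\langle a\rangle^2\le(3m)^{4m}\prod_{\alpha=1}^{2m}\langle a_\alpha\rangle^2\le (6m)^{4m}\prod_{\alpha=1}^{2m}\langle a_\alpha\rangle^2,
$$
using $|A|\le2m$ and $\langle\cdot\rangle\ge1$. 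Therefore
$$
\Big|\sum_{\alpha=1}^{2m}\frac{\delta_\alpha}{(a_\ast-a_\alpha)^2}\Big|=\frac{|P(a_\ast)|}{Q(a_\ast)}\ge\frac{1}{(6m)^{4m}\prod_{\alpha=1}^{2m}\langle a_\alpha\rangle^2},
$$
which is the claim. The only genuinely delicate point is the bookkeeping in the counting step — one must check that the number of forbidden integers ($\le 6m-2$) is strictly less than the number available ($6m-1$); everything else is routine, and the estimate obtained is in fact comfortably sharper than stated (the constant $(3m)^{4m}$ already suffices).
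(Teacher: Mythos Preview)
Your proof is correct. The paper itself does not give a proof of this lemma but merely cites Lemma~5.11 of \cite{KillBill}; your argument --- grouping the terms by spatial index, clearing denominators to obtain a nonzero integer polynomial $P$ of degree at most $4m-2$, using pigeonhole to find an integer $a_\ast\in(-3m,3m)$ that is neither a root of $P$ nor one of the $a_\alpha$, and concluding via $|P(a_\ast)|\ge1$ --- is exactly the standard one and coincides with the proof in \cite{KillBill}.
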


\begin{proof}[\underline{Proof of Proposition \ref{prop:measbis}}]  First, we note that by homogeneity, it is enough to deal with the case $\varepsilon = 1$.
We have to prove that 
\begin{equation}
\label{eq:gravelax}
\mathrm{meas}\Big(\Big\{z\in\mathbb{B}_{M}(0,1)\ |\  \underset{\bj\in\mathcal{N}^{r,L}}{\max}\ |\omega_{\bj}^L(z)|\leq \delta \Big\}\Big)\leq \kappa\ \mathrm{meas}(\mathbb{B}_{M}(0,1))\,.
\end{equation}
Fix $\bj=(\delta_{\alpha},a_{\alpha})_{\alpha=1}^{2m} \in\mathcal{N}^{r,L}$ and set
\[
E_{\bj} = \Big\{z\in\mathbb{B}_{M}(0,1)\,,\ |\  |\omega_{\bj}^L(z)|\leq \delta\Big\}\,.
\] 
Take $a_{\ast}(\bj)$ given by Lemma~\ref{lem:mod-freq}. Note that since $L\geq 3r$, we have that $a_{\ast}(\bj) \in [-L,L]$. We are going to impose some restrictions on $I_{a_{\ast}}(z)$ without moving the variables $(z_{j})_{j\neq a_{\ast}}$. By a slight abuse of notations  we write
\[
\omega_{\bj}^L(z) = \omega_{\bj}^L(I_{a_{\ast}})\,.
\]
Recall that 
\[
\omega_{\bj}^L(I_{a_{\ast}}) = \sum_{\alpha=1}^{2m}\delta_{\alpha}\sum_{a\neq a_{\alpha}}\frac{I_{a}}{2(a-a_{\alpha})^{2}} = \sum_{\alpha=1}^{2m}\frac{\delta_{\alpha}}{2(a_{\alpha}-a_{\ast})^{2}}I_{a_{\ast}} + \sum_{\alpha=1}^{2m}\delta_{\alpha}\sum_{a\neq a_{\alpha}, a_{\ast}}\frac{I_{a}}{2(a-a_{\alpha})^{2}}\,.
\]
Then, we observe that for all $I_{a_{\ast}}, I_{a_{\ast}}'$ we have 
\begin{equation*}
\begin{split}
|\omega_{\bj}^L(I_{a_{\ast}})-\omega_{\bj}^L(I_{a_{\ast}}')| = \Big|\sum_{\alpha=1}^{2m}\frac{\delta_{\alpha}}{2(a_{\alpha}-a_{\ast})^{2}}\Big||I_{a_{\ast}}-I_{a_{\ast}}'| 
\geq \frac{1}{(6m)^{4m}\prod_{\alpha=1}^{2m}\langle a_{\alpha}\rangle^{2}} |I_{a_{\ast}}-I_{a_{\ast}}'| \,.
\end{split}
\end{equation*}
Since $\bj\in\mathcal{N}^{r,L}$ (see \eqref{eq:NrM}) we deduce that
\begin{equation}
\label{eq:lip-omega}
|\omega_{\bj}^L(I_{a_{\ast}})-\omega_{\bj}^L(I_{a_{\ast}}')|\geq (3r)^{-2r}L^{-2r}|I_{a_{\ast}}-I_{a_{\ast}}'|\,.
\end{equation}
Therefore, if $z,z'$ are such that $z_{a}=z_{a}'$ for all $a\neq a_{\ast}$ then 
\begin{equation}
\label{eq:diam}
z,z'\in E_{\bj}\quad \implies\quad |I_{a_{\ast}}-I_{a_{\ast}}'| \leq 2\delta (3r)^{2r}L^{2r}\,.
\end{equation}
As a consequence, setting 
$$
\mathcal{G}_{M,a_*} = \{ z \in \mathcal{G}_M \ | \ z_{a_*}=0\} \quad  \mathrm{and} \quad \mathbb{B}_{M,a_*}(0,y) = \mathbb{B}_{M}(0,y) \cap \mathcal{G}_{M,a_*},
$$
we have
\begin{equation}
\label{eq:diam2}
\underset{z' \in \mathcal{G}_{M,a_*} }{\sup}\ \int_{\mathbb{C}}\mathbf{1}_{E_{\bj}}(z' ; z_{a_{\ast}}) dz_{a_{\ast}}\leq 2\pi \delta(3r)^{2r}L^{2r}\,.
\end{equation}
Indeed, for fixed $z'$, writing $z_{a_{\ast}}$ in polar coordinates, $z_{a_{\ast}}=ye^{\mi\phi}$, yields 
\begin{equation*}
\int_{\mathbb{C}}\mathbf{1}_{E_{\bj}}(z' ; z_{a_{\ast}}) dz_{a_{\ast}} = \int_{(0,2\pi)}\int_{(0,1)}\mathbf{1}_{E_{\bj}}(z' ; ye^{\mi\phi} )ydy 
= \pi\int_{(0,1)}\mathbf{1}_{E_{\bj}}(z' ; ye^{\mi\phi}) dy^{2}\,,
\end{equation*}
and~\eqref{eq:diam2} follows from~\eqref{eq:diam}. Now, we apply Fubini's theorem to get
\begin{align*}
\mathrm{meas}(E_{\bj}) &= \int_{\mathbb{B}_{M}(0,1)}\mathbf{1}_{E_{\bj}}(z)dz \\
&\leq \int_{z'\in \mathbb{B}_{M,a_*}(0,1) } \int_{\mathbb{C}}\mathbf{1}_{E_{\bj}}(z' ; z_{a_{\ast}}) dz_{a_{\ast}} d z'  \\
&\leq\mathrm{meas}(\mathbb{B}_{M,a_*}(0,1) )2\pi \delta(3r)^{3r}L^{2r}\,,
\end{align*}
where the last estimate follows from~\eqref{eq:diam2}. Now we note that, by homogeneity,
\begin{multline*}
\mathrm{meas}(\mathbb{B}_{M}(0,1)) = \int_{ \substack{ z' \in  \mathbb{B}_{M,a_*}(0,1), z_{a_*} \in \mathbb{C}  \\  \| z'\|_{\sigma} + 2e^{\sigma |a_*|^\theta} |z_{a_*}| \leq 1} } 1 dz' \,dz_{a_*} = \frac{\pi e^{-2 \sigma |a_*|^\theta}}2  \int_{0}^1  \int_{  z' \in  \mathbb{B}_{M,a_*}(0,1-y)} 1 dz' \, ydy \\
= \mathrm{meas}(\mathbb{B}_{M,a_*}(0,1) ) \frac{\pi e^{-2 \sigma |a_*|^\theta}}2  \int_{0}^1  y(1-y)^{4M} dy =  \mathrm{meas}(\mathbb{B}_{M,a_*}(0,1) )  \frac{ \pi e^{-2 \sigma |a_*|^\theta} }{8M (4M+1)}.
\end{multline*}
Therefore, since $|a_{\ast}|\leq 3r/2$
\[
\mathrm{meas}(E_{\bj})\leq 64M^2  \delta(3r)^{2r}L^{2r} e^{3 \sigma r } \mathrm{meas}(\mathbb{B}_{M}(0,1))
\]
and using that $\#\mathcal{N}^{r,L} \leq 2^{r} (2L+1)^r \leq (9L)^r  $, we conclude that 
\begin{equation*}
\begin{split}
\mathrm{meas}\Big(\Big\{z\in\mathbb{B}_{M}(0,1)\ |\  \underset{\bj\in\mathcal{N}^{r,L}}{\max}\ |\omega_{\bj}^L(z)|&\leq \delta\Big\}\Big)\leq \sum_{\bj\in\mathcal{N}^{r,L}}\mathrm{meas}(E_{\bj}) \\
  &\leq 64M^2   \delta(9r)^{2r}L^{3r} e^{3 \sigma r } \mathrm{meas}(\mathbb{B}_{M}(0,1)) .
 \end{split}
\end{equation*}
The proof of Proposition~\ref{prop:measbis} then follows from assumption \eqref{eq:coucoula}.
\end{proof}

\subsection{Probability estimates: proof of Proposition \ref{prop:proba}} Let $Y$ be the random function in $\mathcal{G}$, whose Fourier coefficients $Y_a$ are independent and uniformly distributed in $(0, \langle a \rangle^{-2} e^{-\sigma |a|^\theta})$, and let $Z^{(0)} = Y / \| Y\|_{\sigma}$ be the projection of $Y$ on the unit sphere of $\mathcal{G}$. We aim at proving that, provided that $\varepsilon_0$ is small enough, we have
$$
\mathbb{P}(\forall 0<\varepsilon \leq \varepsilon_0, \ \varepsilon Z^{(0)} \in \Theta_\varepsilon) \geq 1 - \varepsilon_0^{1/12}.
$$

\noindent \underline{$\bullet$ Step 1 : probabilistic part.} First, we consider $\bj \in \mathcal{N}$, $\gamma >0$ and as previously $a_{\ast}(\bj)$ given by Lemma~\ref{lem:mod-freq}. Then we note that 
$$
\omega_{\bj}^\infty (Y) = c_{a_*} |Y_{a_*}|^2 + R_{a_*} \quad \mathrm{where} \quad |c_{a_*}| \geq (\mu_1(\bj))^{-2\# \bj} (3\# \bj)^{-2 \# \bj}
$$
and $R_{a_*}$ is independent of $Y_{a_*}$. As a consequence, applying Lemma 4.17 of \cite{KtA}, we have
\begin{multline*}
\mathbb{P}( |\omega_{\bj}^\infty (Y)|<\gamma ) = \mathbb{E} \int_{0}^1  \mathbbm{1}_{ |c_{a_*} \langle a_*\rangle^{-4} e^{-2\sigma |a_*|^{\theta}} y^2 + R_{a_*}| <\gamma} \, \mathrm{d}y \leq 4 \sqrt{\gamma} c_{a_*}^{-1/2} \langle a_*\rangle^{2} e^{\sigma |a_*|^{\theta}} \\
\leq 4 \sqrt{\gamma} (\mu_1(\bj))^{\# \bj} (3\# \bj)^{ \# bj+2} e^{3\sigma \# \bj /2 }.
\end{multline*}
It follows that\footnote{These estimates are very standard, for a more detailed proof of them, we refer for example the reader to \cite{KtA}.}
$$
\mathbb{P}( \exists \bj \in \mathcal{N}, \quad |\omega_{\bj}^\infty (Y)|<10\gamma \eta_{\bj}  ) \leq   \sqrt{\gamma}  \sum_{\bj \in \mathcal{N}} 30^{-\#\bj}  (\mu_1(\bj))^{-2\# \bj} \leq \sqrt{\gamma}
$$
where
$$
\eta_{\bj} := \frac1{160} (\mu_1(\bj))^{-6\# \bj} (90\# \bj)^{-2 \# \bj -4} e^{-3\sigma  \# \bj } .
$$
 Then we note that almost surely we have
$$
\| Y\|_{\sigma} \leq 2 \sum_{a \in \mathbb{Z}} \langle a \rangle^{-2} \leq 2(1+\pi) < 10.
$$
Since $\omega_{\bj}^\infty$ is homogeneous, it follows that for all $\varepsilon_0>0$, we have
$$
\mathbb{P}( \forall \bj \in \mathcal{N}, \ |\omega_{\bj}^\infty (Z^{(0)})| \geq \varepsilon_0^{1/6} \eta_{\bj}  )  \geq 1 - \varepsilon_0^{1/12}.
$$
\noindent \underline{$\bullet$ Step 2 : deterministic part.} To conclude this proof it is enough to prove that, provided that $\varepsilon_0$ is small enough, if $z^{(0)} \in \mathcal{G}$ is such that for all $\bj \in \mathcal{N}$,  $|\omega_{\bj}^\infty (z^{(0)})| \geq \varepsilon_0^{1/6} \eta_{\bj} $ and if $\varepsilon \leq \varepsilon_0$ then $\varepsilon z^{(0)} \in \Theta_\varepsilon$.

More concretely, we fix such $\varepsilon$ and $z^{(0)}$ and $\bj \in \mathcal{N}^{6r_\varepsilon, L_\varepsilon}$ and we aim at proving that 
\begin{equation}
\label{eq:what_we_want}
|\omega_{\bj}^{L_\varepsilon}(\varepsilon \Pi_{L_\varepsilon} z^{(0)})|> 2 \delta_{\varepsilon}.
\end{equation}
Applying the triangular inequality and noticing that $\omega_{\bj}^{L_\varepsilon} = \omega_{\bj}^{\infty} \circ \Pi_{L_\varepsilon} $ , we get
$$
|\omega_{\bj}^{L_\varepsilon}(\varepsilon \Pi_{L_\varepsilon} z^{(0)})| \geq |\omega_{\bj}^{\infty}(\varepsilon  z^{(0)})| - |\omega_{\bj}^{\infty}(\varepsilon  z^{(0)}) -\omega_{\bj}^{\infty}(\varepsilon \Pi_{L_\varepsilon} z^{(0)}) | .
$$
On the one hand, since $\bj \in \mathcal{N}^{6r_\varepsilon, L_\varepsilon}$, we have (provided that $\varepsilon_0$ is small enough)
$$
|\omega_{\bj}^{\infty}(\varepsilon  z^{(0)})| \geq \varepsilon_0^{1/6} \varepsilon^2 \eta_{\bj} \geq \frac1{160} \varepsilon^{2+1/6} L_\varepsilon^{-36 r_\varepsilon} (540 r_\varepsilon)^{-12 r_\varepsilon -4} e^{-18\sigma  r_\varepsilon  } \geq 4 \delta_\varepsilon.
$$
where the last estimate is ensured by our choice of the parameters \eqref{eq:parameters3}. On the other hand, applying Lemma \ref{eq:lip}, we have (provided that $\varepsilon_0$ is small enough)
\begin{equation*}
 |\omega_{\bj}^{\infty}(\varepsilon  z^{(0)}) -\omega_{\bj}^{\infty}(\varepsilon \Pi_{L_\varepsilon} z^{(0)}) |  \leq \varepsilon^2 \# \bj \sum_{|a|>L_\varepsilon} |z^{(0)}_a|^2 
 \lesssim \varepsilon^2 6r_\varepsilon e^{-2\sigma |L_\varepsilon|^\theta}  \leq \delta_\varepsilon.
\end{equation*}
Putting together the three last estimates, we get, as expected, \eqref{eq:what_we_want}.
\appendix

\section{Proof of Lemma~\ref{lem:expdecay}}
\label{sec:appendix}
This short appendix is devoted to the proof of Lemma~\ref{lem:expdecay}. We first show preliminary technical Lemmas.

\begin{lemma}\label{elog}
Let $f(x)=\frac{a^x}{\log a}-ex$ on $x\in\mathbb R$ of $a>1$, then one has $f(x)\ge0$.
\end{lemma}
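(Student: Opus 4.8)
The plan is to reduce the claim to the classical elementary inequality $e^{t}\ge et$, valid for all $t\in\mathbb{R}$. First I would set $c:=\log a$; since $a>1$ we have $c>0$, and $a^{x}=e^{cx}$. With this notation,
\[
f(x)=\frac{e^{cx}}{c}-ex=\frac1c\bigl(e^{cx}-ecx\bigr).
\]
Because $c>0$, showing $f(x)\ge0$ for all $x\in\mathbb{R}$ is equivalent to showing $e^{cx}-ecx\ge0$ for all $x\in\mathbb{R}$; and since $c>0$, as $x$ ranges over $\mathbb{R}$ the quantity $t:=cx$ ranges over all of $\mathbb{R}$. Hence it suffices to prove $e^{t}\ge et$ for every $t\in\mathbb{R}$.

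For the latter I would invoke the standard bound $e^{s}\ge 1+s$ (immediate from convexity of $\exp$, or from the fact that $s\mapsto e^{s}-1-s$ has derivative $e^{s}-1$ which vanishes only at $s=0$, where the function attains its minimum value $0$). Applying it with $s=t-1$ gives
\[
e^{t}=e\cdot e^{t-1}\ge e\,\bigl(1+(t-1)\bigr)=et,
\]
which is exactly what is needed. Substituting back, $f(x)=\tfrac1c\bigl(e^{cx}-ecx\bigr)\ge0$ for all $x\in\mathbb{R}$, as claimed.

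There is essentially no obstacle here; the only point worth a word of care is that the substitution $t=cx$ sweeps out all real $t$, so the inequality $e^{t}\ge et$ is genuinely needed for negative $t$ as well (where it is trivial, the right-hand side being negative). This is why the one-line convexity argument, valid on all of $\mathbb{R}$, is cleaner than any case analysis; a monotonicity argument on $f$ itself would also work but requires locating the critical point $x=\tfrac1c$ and checking the sign of $f$ there, which amounts to the same computation.
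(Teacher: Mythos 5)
Your proof is correct. The paper's own argument is the one you mention in your closing remark: it locates the unique critical point $x=\tfrac1{\log a}$ of $f$, observes that $f$ vanishes there, and concludes (implicitly using convexity of $f$) that this is the global minimum. Your route — substituting $t=cx$ with $c=\log a$ to reduce to the classical tangent-line inequality $e^{t}\ge et$, itself a consequence of $e^{s}\ge 1+s$ — is an equivalent repackaging of the same computation, and is if anything slightly cleaner since it makes the convexity input explicit rather than leaving it tacit. Both arguments are complete; there is no gap in either.
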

\begin{proof}
We first compute the zero point of the derivative
\[
f'(x)=a^x-e=0\Rightarrow x=\frac1{\log a}.
\]
Obviously $f(x)=0$ at the point, which leads to $f(x)\ge0$.
\end{proof}

\begin{lemma}\label{mnelog}
Let $f(x)=\left(\frac ne\right)^n(\log x)^{-n}-m^nx^{-m}$ on $x\in(1,\infty)$ of $m,n\in\mathbb N^*$, then we have $f(x)\ge0$.
\end{lemma}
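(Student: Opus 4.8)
The plan is to reduce this two-parameter inequality to the elementary one-variable bound $\log t \le t/e$ (valid for all $t>0$), which is itself essentially Lemma~\ref{elog}.

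First I would observe that on $(1,\infty)$ all the quantities involved are positive: $\left(\frac ne\right)^n>0$, $(\log x)^{-n}>0$ and $m^n x^{-m}>0$. Hence, since $s\mapsto s^{1/n}$ is strictly increasing on $(0,\infty)$, the inequality $f(x)\ge0$ is equivalent to
\[
\frac{n}{e\,\log x}\ \ge\ \frac{m}{x^{m/n}}\,,\qquad\text{i.e.}\qquad \frac{n}{e\,m}\,x^{m/n}\ \ge\ \log x\,.
\]

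Next I would substitute $t:=x^{m/n}$. Since $x>1$ and $m/n>0$, this maps $(1,\infty)$ bijectively onto $(1,\infty)$, and $\log x=\frac nm\log t$; the previous inequality then becomes $\frac{n}{e\,m}\,t\ge\frac nm\log t$, that is $\log t\le t/e$.

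Finally it remains to check that $\log t\le t/e$ for all $t>0$. This follows from Lemma~\ref{elog} applied with $a=e$ (so that $\log a=1$), which gives $e^{s}-e\,s\ge0$ for every $s\in\mathbb R$; evaluating at $s=\log t$ yields $t\ge e\log t$, as wanted. (Alternatively, one checks directly that $g(t):=t/e-\log t$ satisfies $g'(t)=1/e-1/t$, hence attains its minimum at $t=e$ with $g(e)=0$.) There is no real difficulty here; the only points requiring a little care are that each reduction step is an equivalence — using the monotonicity of $s\mapsto s^{1/n}$ and the positivity of $m/n$ — and that the change of variable $t=x^{m/n}$ preserves the domain $(1,\infty)$.
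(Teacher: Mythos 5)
Your proof is correct and follows essentially the same route as the paper: both reduce the claim, after taking $n$-th roots, to the elementary inequality $t\ge e\log t$ obtained from Lemma~\ref{elog} (the paper instantiates that lemma with $a=x$ at the point $m/n$, while you substitute $t=x^{m/n}$ and use $a=e$, but these are the same inequality in disguise). No gaps.
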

\begin{proof}
Thanks to Lemma \ref{elog} one gets
\[
\frac{x^\frac mn}{\log x}\ge e\frac mn.
\]
It follows that for $x>1$
\[
\frac ne(\log x)^{-1}\ge mx^{-\frac mn},
\]
which implies $f(x)\ge0$.
\end{proof}

\begin{lemma}\label{sigmax}
Let $f(x)=1+\theta x-(1+x)^\theta$ on $x\in[0,\infty)$ of $\theta\in(0,1)$, then we have $f(x)\ge0$.
\end{lemma}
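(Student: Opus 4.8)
The plan is to reduce the inequality to an elementary monotonicity statement. Set $f(x)=1+\theta x-(1+x)^\theta$ on $[0,\infty)$. First I would record the boundary value $f(0)=1+0-1=0$. Then I would differentiate:
\[
f'(x)=\theta-\theta(1+x)^{\theta-1}=\theta\bigl(1-(1+x)^{\theta-1}\bigr).
\]
Since $\theta\in(0,1)$ we have $\theta-1<0$, so the function $t\mapsto t^{\theta-1}$ is (strictly) decreasing on $(0,\infty)$; as $1+x\ge 1$ for $x\ge 0$ this gives $(1+x)^{\theta-1}\le 1^{\theta-1}=1$, hence $f'(x)\ge 0$ for all $x\ge 0$.

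From $f'\ge 0$ on $[0,\infty)$ the function $f$ is non-decreasing there, and combined with $f(0)=0$ this yields $f(x)\ge f(0)=0$ for every $x\ge 0$, which is exactly the claim.

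There is essentially no obstacle here: the statement is a one-line consequence of the concavity of $t\mapsto t^\theta$ (equivalently, of the monotonicity of $t\mapsto t^{\theta-1}$), and the only point to be slightly careful about is that the derivative computation is valid on all of $[0,\infty)$, including a one-sided derivative at $x=0$, which is fine since $t\mapsto t^\theta$ is differentiable at $t=1$. If one prefers to avoid differentiation altogether, an alternative is to invoke concavity directly: writing $1+x=\theta\cdot\frac{1+x}{\theta}+(1-\theta)\cdot 0$ is not quite the right convex combination, so instead one uses that concavity of $\varphi(t)=t^\theta$ gives $\varphi(1+x)\le \varphi(1)+\varphi'(1)\,x=1+\theta x$, which is precisely $f(x)\ge 0$. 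I would present the derivative version as it is the shortest and most self-contained.
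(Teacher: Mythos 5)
Your proof is correct and follows exactly the paper's argument: the paper likewise notes $f(0)=0$ and $f'(x)\ge 0$ and concludes immediately, whereas you simply spell out the (easy) verification that $f'(x)=\theta\bigl(1-(1+x)^{\theta-1}\bigr)\ge 0$. Nothing to change.
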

\begin{proof}
Thanks to $f(0)=0$ and $f'(x)\ge0$, we directly obtain the result.
\end{proof}
We are now ready to prove Lemma~\ref{lem:expdecay}.
\begin{proof}[Proof of Lemma~\ref{lem:expdecay}]
First, we note that if $\bj\in\mathcal M_m$ satisfies $\mu_3(\bj)>N$, then $\mu_2(\bj_2,\cdots,\bj_{2m}) >N$. As a consequence, we have
\begin{equation}\label{inf0.1}
\inf\limits_{\substack{\bj\in\mathcal M_m\\\mu_3(\bj)>N}} \sum_{\beta=2}^{2m}|j_\beta|^\theta-\Big(\sum_{\beta=2}^{2m}|j_\beta|\Big)^\theta
 \geq \inf\limits_{ \substack{a_2 \geq \cdots \geq a_{2m} \geq 0 \\ a_3> N}  } \sum_{\beta=2}^{2m}a_\beta^\theta-\big(\sum_{\beta=2}^{2m}a_\beta\big)^\theta 
\end{equation}
Then, observe that 
\begin{align*}
\sum_{\beta=2}^{2m}a_\beta^\theta-\big(\sum_{\beta=2}^{2m}a_\beta\big)^\theta\ge a_2^\theta+a_3^\theta-(a_2+a_3)^\theta+ \sum_{\beta=4}^{2m}a_\beta^\theta-\big(\sum_{\beta=4}^{2m}a_\beta\big)^\theta\ge a_2^\theta+a_3^\theta-(a_2+a_3)^\theta.
\end{align*}
It follows that
\[
\eqref{inf0.1}\ge\inf_{\substack{a,b\in\mathbb Z\\ a\ge b\ge N}}\{a^\theta+b^\theta-(a+b)^{\theta}\}.
\]
Thanks to Lemma \ref{sigmax}, letting $q=\frac ba\in(0,1]$, we have
\[
(a+b)^\theta=a^\theta(1+q)^\theta\le a^\theta(1+\theta q)=a^\theta+\theta\frac{b}{a^{1-\theta}}=a^\theta+\theta b^\theta q^{1-\theta}\le a^\theta+\theta b^\theta.
\]
It leads to that
\[
\eqref{inf0.1}\ge\inf_{b\ge N}\{b^\theta-\theta b^\theta\}\ge(1-\theta)N^\theta.
\]

\end{proof}

\end{document}